\documentclass[11pt,a4paper]{article}

\usepackage{euscript}
\usepackage{amsmath}
\usepackage{graphicx}
\usepackage{amsthm}
\usepackage{amssymb}
\usepackage{epsfig}
\usepackage{url}
\usepackage{colonequals}
\usepackage{amsmath,amscd}
\usepackage{booktabs, colonequals, comment, enumitem, tikz}
\usepackage[colorlinks=true,linkcolor=blue]{hyperref}
\usepackage{yfonts}

\theoremstyle{plain}
\newtheorem{theorem}{Theorem}[section]
\newtheorem{corollary}[theorem]{Corollary}

\newtheorem{lemma}[theorem]{Lemma}
\newtheorem{proposition}[theorem]{Proposition}

\newtheorem*{conjecture*}{Conjecture}
\newtheorem{definition}{Definition}[section]

\newtheorem{example}{Example}[section]
\newtheorem{remark}{Remark}[section]
\numberwithin{equation}{section}

\newcommand{\Nil}{\operatorname{Nil}}
\newcommand{\Int}{\operatorname{Int}}

\newcommand{\irad}[1]{\sqrt[\mathrm{i}]{#1}}
\newcommand{\jrad}[1]{\sqrt[\mathrm{j}]{#1}}
\newcommand{\frad}[1]{\sqrt[\mathrm{f}]{#1}}

\title{Integral Dependence over One-Sided Ideals, Part I: Foundations and Classical Applications} 
\author{Masood Aryapoor\\
	\tiny{\textit{Department of Business and Mathematics}}\\
	\tiny{\textit{M\"{a}lardalen  University}}\\
	\tiny{\textit{Hamngatan 15, 632 17, Eskilstuna, 
			Sweden
	}}
}
\date{}
\begin{document}
	\maketitle
	\begin{abstract} 
		We introduce notions of integrality and full integrality over one-sided ideals, together with associated radical constructions, and develop a general framework for their study. As applications, we extend several classical results concerning Jacobson radicals to the one-sided setting. In particular, we prove that the Jacobson radical of a left ideal is integral over that ideal in left artinian rings and in algebraic algebras, obtaining a one-sided analogue of a theorem of Amitsur. We also show that Jacobson radicals of graded left ideals are graded and derive a corresponding extension of Amitsur's theorem on Jacobson radicals of polynomial rings.     
	\end{abstract}
	\section{Introduction} 
	
	Integral dependence is one of the central notions in commutative ring theory, with numerous applications throughout algebra and algebraic geometry. 
	Recall that an element \(a\) of a commutative ring \(R\) is said to be \emph{integral over a subring} \(S\subseteq R\) if there exist \(s_0,\dots,s_{n-1}\in S\) such that \begin{equation}\label{eq:int-subring} a^n+s_{n-1}a^{n-1}+\cdots+s_1a+s_0=0. \end{equation} A noncommutative analogue of this notion arose in the development of the theory of noncommutative analogues of Dedekind domains during the 1930s and 1940s. As discussed by Jacobson \cite[Chapter~6]{jacobson1943theory}, this theory was motivated by Emmy Noether's ideal-theoretic characterization of commutative Dedekind domains and attempts to extend it to noncommutative rings. Let us call the resulting notion \emph{left-integrality}. More precisely, an element \(a\) of a not necessarily commutative ring \(R\) is said to be left-integral over a subring \(S\subseteq R\) if it satisfies an equation of the form \eqref{eq:int-subring} with coefficients \(s_i\in S\). Jacobson showed that, when \(S\) is left noetherian, this is equivalent to the existence of a finitely generated left \(S\)-submodule \(M\subseteq R\) such that \(a^n\in M\) for all \(n\ge0\) \cite[p.~122, Theorem~7]{jacobson1943theory}. This module-theoretic characterization was later adopted by van der Waerden \cite[p.~75]{Waerden1950} as the definition of integrality for arbitrary rings. He furthermore proved that, when \(S\) is left noetherian, it is equivalent to the polynomial condition \eqref{eq:int-subring}. Although this notion behaves well in the context of noncommutative Dedekind domains, it does not, in general, preserve several fundamental properties of integral dependence familiar from the commutative theory.  Motivated by these shortcomings, Atterton \cite{atterton1972definitions} proposed a different approach. For an arbitrary ring \(R\), he defined an element \(a\in R\) to be integral over a subring \(S\subseteq R\) if there exists a finitely generated left \(S\)-submodule \[ M=Sc_1+\cdots+Sc_n \] of \(R\), where each \(c_i\) lies in the center of \(R\), such that \(1\in M\) and \(Ma\subseteq M\). This notion enjoys the transitivity property of integral dependence. Moreover, the set of elements integral over a given subring forms a subring of \(R\). Nevertheless, this notion appears to be too restrictive for many applications, as it relies on the existence of a finitely generated invariant submodule generated by central elements.
	
	Another approach to extending the notion of integrality to the noncommutative setting is to allow more general polynomial expressions in \eqref{eq:int-subring}. This approach was adopted by Schelter in \cite{schelter1976integral,schelter1976integral-errata}, who defined an element \(a\) of a not necessarily commutative ring \(R\) to be \emph{integral} over a subring \(S\) if \(a\) satisfies an equation of the form \[ a^n+p(a)=0, \] where each term of \(p(a)\) is of the form \[ s_1as_2\cdots s_mas_{m+1}, \] with \(m<n\) and \(s_1,\ldots,s_{m+1}\in S\). Building on Schelter's work, Quinn \cite{quinn1989integrality} introduced a more general notion of integrality over an arbitrary subset of a ring. Specifically, an element \(a\) of a ring \(R\) is said to be \emph{integral} over a subset \(S\subseteq R\) if there exists \(n\ge 1\) such that \(a^n\) can be expressed as a finite sum of products \(s_1\cdots s_k\), where each \(s_i\in S\cup\{a\}\), at least one factor in each product belongs to \(S\), and the number of occurrences of \(a\) in each product is strictly less than \(n\). As Quinn notes in \cite{quinn1989integrality}, integrality is traditionally defined only when the base is a subring, but his arguments require considering subsets that are not necessarily closed under multiplication. Schelter's notion of integrality over a subring has proved fruitful in the study of ring extensions and various classes of noncommutative rings, particularly PI-rings and normalizing extensions (see, for example, \cite{pare1978finite,braun1992integrality,lorenz1979integrality,montgomery1984integrality,quinn1989integrality}).

	The present work builds upon the approach of Schelter and Quinn. Their notion of integrality admits a particularly transparent formulation in the context of one-sided ideals: an element \(a\in R\) is integral over a left ideal \(I\) of \(R\) if and only if \(a\) satisfies an equation of the form
	\[
	a^n+r_{n-1}a^{n-1}+\cdots+r_1a+r_0=0
	\]
	with \(r_i\in I\) (see Proposition~\ref{prp:def-int}). In other words, Schelter's notion of integrality coincides with the notion of left-integrality in the context of left ideals. Thus, integrality over left ideals can be studied through a simple condition analogous to that of the commutative setting, while retaining the full scope of the theory. Moreover, when \(I\) is a two-sided ideal, integrality over \(I\) reduces to the classical notion of being nil over \(I\). In this sense, integrality over one-sided ideals may be viewed as a natural extension of classical nil and nilpotence concepts from two-sided ideals to the one-sided setting. To support this viewpoint, we extend several classical results concerning Jacobson radicals to the context of left ideals. The ideas developed here also suggest a broader research program aimed at investigating one-sided analogues of concepts traditionally associated with two-sided ideals. This direction, together with connections between the present theory and several classical problems in ring theory, will be pursued in subsequent work.

	We conclude the introduction with an outline of the paper. Section~\ref{sec:pre} collects the preliminary material used throughout the paper. In Section~\ref{sec:int}, we introduce the notions of integrality and full integrality over one-sided ideals. Based on these concepts, we define two radical constructions for one-sided ideals and investigate their basic properties. As an application, we obtain a new characterization of the lower nilradical (Theorem~\ref{thm:full-rad-zero-lower}).  Section~\ref{sec:generalizations} is devoted to extending several classical results to the setting of one-sided ideals. Specifically, we generalize the theorem asserting that the Jacobson radical of a left artinian ring is nilpotent (Theorem~\ref{thm:rad-artinian}), Amitsur's theorem on the nilness of Jacobson radicals of certain algebras (Theorem~\ref{thm:amitsur-thm}), and Amitsur's characterization of the Jacobson radical of a polynomial ring (Theorem~\ref{thm:jac-rad-R[x]}). 
	
	Throughout the paper, all rings are assumed to be associative and unital. Unless stated otherwise, \(R\) denotes a unital ring. Moreover, unless explicitly specified otherwise, all ideals and one-sided ideals are understood to be ideals and one-sided ideals of \(R\), respectively.

	\section{Preliminaries}\label{sec:pre}
	This section provides some facts about ideal quotients and polynomial rings in central variables.

	\subsection{One-sided ideal quotients}
	
	For a left ideal \(I\) of \(R\) and an element \(a\in R\), set
	\[
	(I:a) \colonequals \{\, r\in R \mid ra\in I \,\}.
	\]
	The following standard facts will be used freely throughout the paper. 
	\begin{proposition}\label{prp:colon-basic-properties}
		Let \(a\in R\). Then:
		\begin{enumerate}
			\item For every left ideal \(I\subseteq R\), the set \((I:a)\) is a left ideal in \(R\).
			\item For any family \(\{I_i\}\) of left ideals in \(R\), we have  \(\bigl(\,\cap_i I_i : a\,\bigr) = \cap_i (I_i : a)\).
			\item If \(\{I_i\}\) is a chain of left ideals in \(R\), then
			\[
			\bigl(\,\cup_i I_i : a\,\bigr) = \cup_i (I_i : a).
			\]
			\item For left ideals \(I,J\subseteq R\), we have \((J:a) + I = (J+Ia:a)\).
			\item For every \(b\in R\), we have \(((I:a):b) = (I:ba)\).
		\end{enumerate}
	\end{proposition}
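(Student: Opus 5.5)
All five items follow by unwinding the definition \((I:a)=\{r\in R\mid ra\in I\}\) and proving the relevant set equalities by double inclusion. The plan is to treat each item directly, using at each step only that left ideals are additive subgroups closed under left multiplication. Item~4 is the only one that needs an actual decomposition argument.

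For item~1, \((I:a)\) is the preimage of \(I\) under the map \(R\to R\), \(r\mapsto ra\). This map is a homomorphism of left \(R\)-modules, since \((sr)a=s(ra)\) and \((r+r')a=ra+r'a\). The preimage of a left submodule under such a map is a left submodule, so \((I:a)\) is a left ideal. Item~2 is immediate: \(ra\in\bigcap_i I_i\) if and only if \(ra\in I_i\) for every \(i\). For item~3, note first that the union of a chain of left ideals is again a left ideal, so the left side makes sense. Then \(ra\in\bigcup_i I_i\) if and only if \(ra\in I_i\) for some \(i\), which gives \((\bigcup_i I_i:a)=\bigcup_i(I_i:a)\). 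For item~5, associativity gives \(r\in((I:a):b)\iff rb\in(I:a)\iff (rb)a=r(ba)\in I\iff r\in(I:ba)\).

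Item~4 is the one step with some content, and I would handle it as follows. First observe that \(Ia=\{xa\mid x\in I\}\) is a left ideal, since it is the image of \(I\) under the left-module map \(r\mapsto ra\). Hence \(J+Ia\) is a left ideal and \((J+Ia:a)\) is defined.

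For the inclusion \(\subseteq\), take \(r=s+x\) with \(sa\in J\) and \(x\in I\). Then \(ra=sa+xa\in J+Ia\).

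For the reverse inclusion, suppose \(ra=j+xa\) with \(j\in J\) and \(x\in I\). Set \(s=r-x\). Then \(sa=j\in J\), so \(s\in(J:a)\), and therefore \(r=s+x\in(J:a)+I\).

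I expect no real obstacle in any of this. The only point needing care is in item~4: one must write an element of \(J+Ia\) as \(j+xa\) with a single \(x\in I\). This is legitimate precisely because \(Ia\) is closed under addition, as \(xa+x'a=(x+x')a\). Having this form lets the \(I\)-component be subtracted off in the reverse inclusion.
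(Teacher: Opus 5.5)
Your proposal is correct: each item follows by unwinding \((I:a)=\{r\in R\mid ra\in I\}\), and the subtraction trick \(s=r-x\) in item~4 (which relies on \(Ia\) being closed under addition, so that every element of \(J+Ia\) has the form \(j+xa\)) is exactly what is needed. The paper gives no proof at all, listing these as standard facts to be used freely, and your direct element-wise verification is the routine argument that this omission assumes.
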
 
	In what follows, let \(a\in R\) be fixed. A left ideal \(I\subseteq R\) is called \emph{\(a\)-closed} if  
	\(I\subseteq (I:a)\), i.e., \(Ia\subseteq I\).  
	By Part~(2) of Proposition~\ref{prp:colon-basic-properties}, for every left ideal \(I\), there exists a smallest \(a\)-closed
	left ideal that contains \(I\).  
	We call this left ideal the \emph{\(a\)-closure} of \(I\). It is easy to verify that the \(a\)-closure of a left ideal \(I\) can be expressed as 
	\[
	Ia^\infty \colonequals \sum_{n\ge 0} Ia^n,
	\]
	where \(Ia^i\) denotes the set \(\{ra^i\mid r\in I\}\). 
	
	A left ideal \(I\subseteq R\) is called \emph{\(a\)-saturated} if  
	\((I:a)\subseteq I\).  
	By Part~(2) of Proposition~\ref{prp:colon-basic-properties}, for every left ideal \(I\), there exists a smallest
	\(a\)-saturated left ideal containing \(I\).  
	We call this ideal the \emph{\(a\)-saturation} of \(I\).

	For a left ideal \(I\subseteq R\), we set 
	\[
	(I:a^\infty) \colonequals \bigcup_{n\ge 0} (\,I + Ia + \cdots + Ia^{n} : a^{n}\,).
	\]
	It is easy to verify that \((I:a^\infty)\) is a left ideal containing \(I\). Furthermore, \((I:a^\infty)\) is the \(a\)-saturation of \(I\), as proved in the following proposition. 
	
	\begin{proposition}\label{prp:a-saturation}
		For any left ideal \(I\subseteq R\), the \(a\)-saturation of \(I\) coincides with \((I:a^\infty)\).
	\end{proposition}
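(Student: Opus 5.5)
The plan is to check three things about \((I:a^\infty)\): it is a well-defined left ideal containing \(I\), it is \(a\)-saturated, and it lies inside every \(a\)-saturated left ideal containing \(I\). Together these show it is the smallest \(a\)-saturated left ideal containing \(I\), i.e.\ the \(a\)-saturation. Throughout, write \(S_n = I+Ia+\cdots+Ia^n\) and \(J_n=(S_n:a^n)\).

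First, the sets \(J_n\) form a chain. If \(ra^n\in S_n\), then \(ra^{n+1}\in Ia+\cdots+Ia^{n+1}\subseteq S_{n+1}\), so \(J_n\subseteq J_{n+1}\). Each \(J_n\) is a left ideal by Proposition~\ref{prp:colon-basic-properties}(1), so the union \((I:a^\infty)\) is a left ideal. It contains \(I\), because \(J_0=(I:1)=I\).

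Second, saturation. Suppose \(ra\in(I:a^\infty)\), say \(ra\in J_n\). Then \(ra^{n+1}=(ra)a^n\in S_n\subseteq S_{n+1}\), so \(r\in J_{n+1}\subseteq (I:a^\infty)\). Hence \(((I:a^\infty):a)\subseteq (I:a^\infty)\).

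Third, minimality, which is the only step needing a real argument. Let \(J\) be an \(a\)-saturated left ideal containing \(I\). I will prove by induction on \(n\) the stronger claim: if \(ra^n\in J+Ja+\cdots+Ja^n\), then \(r\in J\). Since \(S_n\subseteq J+Ja+\cdots+Ja^n\), this gives \(J_n\subseteq J\) for every \(n\), hence \((I:a^\infty)\subseteq J\).

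\begin{itemize}
\item The case \(n=0\) is trivial.
\item For \(n\ge1\), write \(ra^n=y_0+y_1a+\cdots+y_na^n\) with \(y_i\in J\). Put
\[
t=ra^{n-1}-y_1-y_2a-\cdots-y_na^{n-1}.
\]
Then \(ta=y_0\in J\), so \(t\in(J:a)\subseteq J\).
\item Therefore \(ra^{n-1}=t+y_1+y_2a+\cdots+y_na^{n-1}\) lies in \(J+Ja+\cdots+Ja^{n-1}\), and the induction hypothesis gives \(r\in J\).
\end{itemize}

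The key design choice is to strengthen the statement from coefficients in \(I\) to coefficients in \(J\); this is what lets the induction close. With that done, all three properties hold and the proposition follows.
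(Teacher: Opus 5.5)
Your proof is correct and has the same structure as the paper's: show that the union is $a$-saturated, then show by induction on $n$ that each $(I+Ia+\cdots+Ia^n:a^n)$ lies in every $a$-saturated $J\supseteq I$. The only difference is in the inductive step. The paper strips off the \emph{top} coefficient via Proposition~\ref{prp:colon-basic-properties}(4),(5), namely $(I+\cdots+Ia^{n+1}:a^{n+1})=((I+\cdots+Ia^n:a^n):a)+I$, which keeps the coefficients in $I$ and needs no strengthening. You strip off the constant term instead, and therefore must pass to coefficients in $J$, which you do correctly.
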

	
	\begin{proof}
		First we show that \((I:a^\infty)\) is \(a\)-saturated. Using Parts~(3) and (5) of Proposition~\ref{prp:colon-basic-properties}, we write 
		\begin{align*}
			((I:a^\infty) : a)
			& =  \bigl(\bigcup_{n\ge 0} (\,I + Ia + \cdots + Ia^{n} : a^{n}\,) : a \bigr) \\
			& =  \bigcup_{n\ge 0}\bigl( (\,I + Ia + \cdots + Ia^{n} : a^{n}\,) : a \bigr) \\
			&= \bigcup_{n\ge 0} (\,I + Ia + \cdots + Ia^{n} : a^{n+1}\,)\\
			&\subseteq \bigcup_{n\ge 0} (\,I + Ia + \cdots + Ia^{n+1} : a^{n+1}\,)\\
			&= (I:a^\infty).
		\end{align*}
		To finish the proof, we need to show that \((I:a^\infty)\) contains every \(a\)-saturated left ideal containing \(I\). Let \(J\supseteq I\) be any \(a\)-saturated left ideal.   It is enough to show that for every \(n\geq 0\), \((\,I + Ia + \cdots + Ia^{n} : a^{n}\,)\subseteq J\). We use induction on \(n\). The case \(n=0\) is clear. Using Part~(4) of Proposition~\ref{prp:colon-basic-properties}, we write 
		\begin{align*}
			(\,I + Ia + \cdots + Ia^{n+1} : a^{n+1}\,) 
			& =  (\,I + Ia + \cdots + Ia^n : a^{n+1}\,) + I  \\
			&= ((\,I + Ia + \cdots + Ia^n : a^{n}\,) : a\,) + I\\
			&\subseteq  (J:a) + I\\
			&\subseteq  J.
		\end{align*}
	\end{proof}
	\begin{remark} 
		When \(R\) is commutative and \(I\) is an ideal, this notion of \(a\)-saturation coincides with the classical saturation of \(I\) with respect to the multiplicative set \(\{1,a,a^2,\dots\}\). 
	\end{remark}
	A left ideal \(I\subseteq R\) is called \emph{\(a\)-complete} if   \((I:a) = I\), that is, \(I\) is both \(a\)-closed and \(a\)-saturated. It follows from Part~(2) of Proposition~\ref{prp:colon-basic-properties} that for every left ideal \(I\), there exists a smallest \(a\)-complete
	left ideal that contains \(I\).  
	We call this ideal the \emph{\(a\)-completion} of \(I\). The following proposition provides a description of the \(a\)-completion of a left ideal.
	
	\begin{proposition}\label{prp:a-completion}
		For any left ideal \(I\subseteq R\), the \(a\)-completion of \(I\) coincides with \((Ia^\infty : a^\infty)\), which can be expressed as
		\[
		(Ia^\infty : a^\infty) = \bigcup_{n\ge 0} \bigcup_{m\ge 0} (\,I + Ia + \cdots + Ia^{m} : a^{n}\,).
		\]
	\end{proposition}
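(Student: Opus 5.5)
Write \(J\colonequals Ia^\infty=\sum_{k\ge0}Ia^k\) for the \(a\)-closure of \(I\). The plan has two parts. First, show that the \(a\)-saturation of \(J\) is \(a\)-complete and is the smallest \(a\)-complete left ideal containing \(I\). By Proposition~\ref{prp:a-saturation} this saturation equals \((J:a^\infty)\). Second, compute \((J:a^\infty)\) explicitly to get the double union.

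For the minimality, let \(K\supseteq I\) be any \(a\)-complete left ideal. Since \(K\) is \(a\)-closed, \(Ka\subseteq K\), hence \(Ia^k\subseteq K\) for all \(k\), so \(J\subseteq K\). Since \(K\) is also \(a\)-saturated and contains \(J\), Proposition~\ref{prp:a-saturation} gives \((J:a^\infty)\subseteq K\).

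Next, \((J:a^\infty)\) is \(a\)-saturated by Proposition~\ref{prp:a-saturation}, so it remains to show it is \(a\)-closed. This is the step that needs an argument. I would first check that \(J\) itself satisfies \(J+Ja+\dots+Ja^n=J\) for every \(n\), because \(J\) is \(a\)-closed. Consequently
\[
(J:a^\infty)=\bigcup_{n\ge0}(J:a^n).
\]
Now take \(r\) with \(ra^n\in J\). Then \((ra)a^n=(ra^n)a\in Ja\subseteq J\), so \(ra\in(J:a^n)\). Hence \((J:a^\infty)\,a\subseteq (J:a^\infty)\). Combining this with the previous paragraph, \((J:a^\infty)\) is the \(a\)-completion of \(I\).

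For the explicit formula, the reduction above gives \((Ia^\infty:a^\infty)=\bigcup_n(J:a^n)\). The ascending chain \(J_m\colonequals I+Ia+\dots+Ia^m\) has union \(J\). By Part~(3) of Proposition~\ref{prp:colon-basic-properties},
\[
(J:a^n)=\bigcup_m(J_m:a^n).
\]
This yields the stated double union. I expect no real obstacle. The one point needing care is the closure argument, and in particular recognizing that for an \(a\)-closed ideal the saturation formula simplifies to \(\bigcup_n(J:a^n)\).
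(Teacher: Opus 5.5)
Your proof is correct and follows essentially the same route as the paper: minimality via the \(a\)-closure \(Ia^\infty\) and Proposition~\ref{prp:a-saturation}; completeness of \((Ia^\infty:a^\infty)=\bigcup_n(Ia^\infty:a^n)\); and the double union via Part~(3) of Proposition~\ref{prp:colon-basic-properties}. The differences are cosmetic. The paper checks \(((Ia^\infty:a^\infty):a)=(Ia^\infty:a^\infty)\) in a single chain of equalities, whereas you take saturation from Proposition~\ref{prp:a-saturation} and verify closure elementwise. You also make explicit the reduction \((J:a^\infty)=\bigcup_n(J:a^n)\) for \(a\)-closed \(J\), which the paper uses tacitly.
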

	
	\begin{proof}
		
		Let \(J\) be the \(a\)-completion of \(I\). Since \(Ia^\infty\) is the \(a\)-closure of \(I\), \(J\) must contain \(Ia^\infty\). Since \((Ia^\infty : a^\infty)\) is the \(a\)-saturation of \(Ia^\infty\) by Proposition~\ref{prp:a-saturation}, and \(J\) contains \(Ia^\infty\), \(J\) must contain \((Ia^\infty : a^\infty)\). So it is enough to show that \((Ia^\infty : a^\infty)\) is \(a\)-complete. We have 
		\begin{align*}
			(\,(Ia^\infty : a^\infty): a\,)
			& =  \bigl(\bigcup_{n\ge 0} (\,Ia^\infty  : a^{n}\,) : a \bigr) \\
			&= \bigcup_{n\ge 0}\bigl( (\,Ia^\infty  : a^{n}\,) : a \bigr) \\
			&= \bigcup_{n\ge 0} (\,Ia^\infty  : a^{n+1}\,)\\
			&= (Ia^\infty : a^\infty).
		\end{align*}
		The stated identity follows from 
		\[
		(\,Ia^\infty  : a^{n}\,) = \bigcup_{m\ge 0} (\,I + Ia + \cdots + Ia^{m} : a^{n}\,).
		\]
	\end{proof}
	To illustrate these notions, we use the ring of matrices over a division ring, which will also be used in later sections.  
	\begin{example}\label{exm:matrix-rings-clo-sat-com}
		Let \(D\) be a division ring and set \(R=M_n(D)\). It is well-known that the assignment \[
		RA \mapsto V_A\colonequals \{v\in D^n\mid  Av=0\}
		\]
		establishes a one-to-one correspondence between the set of all left ideals of \(R\) and the set of all right \(D\)-subspaces of \(D^n\). The following statements can be easily verified: 
		\begin{enumerate}
			\item \(A\in RB\iff A(V_B) = 0\). 
			\item \(RA\subseteq RB \iff V_B\subseteq V_A\).
			\item \((RB:A) = RC\), where \(V_C = A(V_B)\). 
		\end{enumerate}
		Fix \(A\in R\). Then for \(B\in R\), \(RB\) is \(A\)-closed (resp., \(A\)-saturated) iff \(A(V_B)\subseteq V_B\) (resp., \(A(V_B)\supseteq V_B\)).  Therefore,  \(RB\) is \(A\)-complete iff \(A(V_B) = V_B\). Since \(\dim_D A(V_B)\le \dim_D V_B\), we see that \(RB\) is \(A\)-saturated iff \(A(V_B)=V_B\). So \(RB\) is \(A\)-saturated iff it is \(A\)-complete. Furthermore, the \(A\)-closure of \(RB\) corresponds to the largest right \(D\)-subspaces of \(V_B\) satisfying \(A(V_B)\subseteq V_B\);  the \(A\)-saturation of \(RB\) corresponds to the largest right \(D\)-subspaces of \(V_B\) satisfying \(A(V_B)\supseteq V_B\); the \(A\)-completion of \(RB\) coincides with the \(A\)-saturation of \(RB\). 
	\end{example}
	\subsection{The polynomial ring \(R[t]\)}
	The concepts introduced in the previous subsection can be studied from the perspective of polynomial rings. Let \(R[t]\) denote the polynomial ring over \(R\) in a central indeterminate \(t\). Every nonzero element \(f\in R[t]\) has a unique presentation 
	\[
	f(t) = r_0+r_1t+\cdots+r_nt^n,
	\]
	where \(r_i\in R\) with \(r_n\neq 0\); the \emph{degree} of such an element is defined to be \(\deg f\colonequals n\). 
	
	For every \(f\in R[t]\) and \(a\in R\), there exists a unique element \(f(a)\in R\), called the \emph{(right) evaluation} of \(f\) at \(a\),  such that 
	\[
	f(t) = f(a) + g(t)(t-a)  
	\]
	for some \(g\in R[t]\) with \(\deg g < \deg f\).  If \(f(t) = r_0+r_1t+\cdots+r_nt^n\), then 
	\[
	f(a) = r_0+r_1a+\cdots+r_na^n.
	\]
	The evaluation map \(f\mapsto f(a)\) from \(R[t]\) to \(R\) is a (left) \(R\)-module homomorphism; it is a ring homomorphism iff \(a\) is in the center of \(R\). The following basic properties of polynomial evaluation will be used freely throughout the paper.
	\begin{proposition}
		Let \(f,g\in R[t]\) and \(a\in R\). Then
		\begin{enumerate}
			\item[(i)] \(f(a)=0\) iff \(f\in R[t](t-a)\).
			\item[(ii)] If \(g(a)=0\), then \((fg)(a)=0\). \item[(iii)] If \(g(a)a=bg(a)\) for some \(b\in R\), then  \((fg)(a)=f(b)g(a)\).  
		\end{enumerate}
	\end{proposition}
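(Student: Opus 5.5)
The plan is to work throughout with the division algorithm that defines right evaluation. Every \(f\in R[t]\) can be written as \(f(t)=f(a)+g(t)(t-a)\). I will first check that the explicit formula \(f(a)=\sum r_ia^i\) satisfies this identity. The key observation is that \(t^i-a^i=(t^{i-1}+t^{i-2}a+\cdots+a^{i-1})(t-a)\) in \(R[t]\); this factorization is legitimate because \(t\) is central, so \(t^{j}a=at^{j}\) and the telescoping works with the factor \((t-a)\) on the right. Multiplying on the left by \(r_i\) and summing over \(i\) gives \(f(t)-\sum r_ia^i\in R[t](t-a)\).

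Uniqueness of the remainder follows because a nonzero element \(h(t-a)\) has degree \(\deg h+1\ge 1\), since its leading coefficient is that of \(h\). Hence a constant lying in \(R[t](t-a)\) must be \(0\). The same argument proves (i). If \(f(a)=0\), then \(f=g(t-a)\) directly. Conversely, if \(f=h(t-a)\), then writing \(f=f(a)+g(t-a)\) gives \(f(a)=(h-g)(t-a)\), which is a constant, so \(f(a)=0\).

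For (ii), note that \(R[t](t-a)\) is a left ideal of \(R[t]\). If \(g(a)=0\), then by (i) \(g\in R[t](t-a)\), so \(fg\in R[t](t-a)\), and applying (i) again gives \((fg)(a)=0\).

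For (iii), I will write \(g=g(a)+h(t-a)\), so that \(fg=f\,g(a)+fh(t-a)\). By (i) and left \(R\)-linearity of evaluation (the added term lies in \(R[t](t-a)\)), this gives \((fg)(a)=(f\,g(a))(a)\). Write \(c=g(a)\) and \(f=\sum r_it^i\). Since \(c\) is a constant and \(t\) is central, \(fc=\sum r_i c\,t^i\), and therefore \((fc)(a)=\sum r_i c a^i\). The hypothesis \(ca=bc\) gives \(ca^i=b^ic\) by induction on \(i\). Hence \((fc)(a)=\sum r_ib^ic=f(b)c\), as required.

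The only point needing care is the noncommutative bookkeeping. One must keep every factor \((t-a)\) on the right and rely on the centrality of \(t\); the intertwining step \(ca^i=b^ic\) is the crux of (iii). Everything else is routine.
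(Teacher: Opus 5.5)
Your proof is correct and complete. The three ingredients it rests on are exactly what (i)--(iii) need:
\begin{enumerate}
\item the factorization \(t^i-a^i=(t^{i-1}+t^{i-2}a+\cdots+a^{i-1})(t-a)\), which is valid because \(t\) is central;
\item the degree argument showing that \(R[t](t-a)\) contains no nonzero constants;
\item the intertwining identity \(ca^i=b^ic\).
\end{enumerate}
The paper gives no proof of this proposition and records these properties as standard facts to be used freely, so your argument is the routine verification it leaves implicit.
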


	It is easy to verify that for a left ideal \(I\subseteq R\) and \(a\in R\), 
	\begin{equation}\label{eq:evaluation-Ia-infty}
		Ia^\infty = \{\,f(a) \mid f\in I[t]\,\}.
	\end{equation}
	We now give characterizations of \(a\)-closed left ideals and \(a\)-saturated left ideals using \(R[t]\).
	\begin{proposition}
		For every left ideal \(I\subseteq R\) and \(a\in R\), the set 
		\[
		R[t](t-a) + I = \{f\in R[t] \mid f(a)\in I\}
		\]
		is a left ideal of \(R[t]\) iff \(I\) is \(a\)-closed. 
	\end{proposition}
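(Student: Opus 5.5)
First I will check that the set equality \(R[t](t-a)+I=\{f\in R[t]\mid f(a)\in I\}\) holds for every left ideal \(I\), with no closedness assumption. Here \(I\) is viewed inside \(R[t]\) as constant polynomials. For the inclusion \(\subseteq\), take \(f=g(t-a)+r\) with \(r\in I\). Part (i) gives \((g(t-a))(a)=0\), and evaluation is left \(R\)-linear, so \(f(a)=r\in I\). For \(\supseteq\), if \(f(a)\in I\), write \(f=f(a)+g(t)(t-a)\) using the defining property of the evaluation. Call this set \(L\). It is an additive subgroup, since evaluation is additive, and it is closed under left multiplication by elements of \(R\), since evaluation is left \(R\)-linear and \(I\) is a left ideal. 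Because \(t\) is central and every element of \(R[t]\) is a sum of terms \(rt^k\), the set \(L\) is a left ideal of \(R[t]\) exactly when it is additionally closed under left multiplication by \(t\).

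So the whole statement reduces to showing that \(tL\subseteq L\) if and only if \(Ia\subseteq I\). The main step is computing \((tf)(a)\) in terms of \(f(a)\). Write \(f=f(a)+g(t-a)\). Then \(tf=tf(a)+tg(t-a)\). By part (ii), \((tg(t-a))(a)=0\). Since \(t\) is central, \(tf(a)=f(a)t\), whose right evaluation at \(a\) is \(f(a)a\). Hence
\[
(tf)(a)=f(a)\,a .
\]
The same identity also follows by direct coefficient computation: if \(f=\sum r_it^i\), then \((tf)(a)=\sum r_ia^{i+1}=f(a)a\).

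For the forward direction, assume \(I\) is \(a\)-closed. Then for \(f\in L\) we have \((tf)(a)=f(a)a\in Ia\subseteq I\), so \(tf\in L\), and \(L\) is a left ideal. For the converse, assume \(L\) is a left ideal of \(R[t]\) and let \(r\in I\). The constant polynomial \(r\) lies in \(L\), so \(tr\in L\), which means \((tr)(a)=ra\in I\). Therefore \(Ia\subseteq I\).

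I do not expect any real obstacle. The only point requiring care is the identity \((tf)(a)=f(a)a\), which is where the centrality of \(t\) and the use of right evaluation enter. Everything else is routine.
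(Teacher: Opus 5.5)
Your proof is correct and is essentially the paper's argument. Both reduce the question to closure under left multiplication by \(t\), and both rest on the fact that \(tr\equiv ra\) modulo \(R[t](t-a)\). The paper checks only \(tI\subseteq R[t](t-a)+I\), since \(R[t](t-a)\) is already a left ideal, using \(tr=r(t-a)+ra\); you check all of \(tL\subseteq L\) via \((tf)(a)=f(a)a\), and you also verify the set equality \(R[t](t-a)+I=\{f\mid f(a)\in I\}\), which the paper takes for granted.
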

	\begin{proof}
		Clearly, \(R[t](t-a) + I\) is a left \(R\)-submodule of \(R[t]\). So, \(R[t](t-a) + I\) is a left ideal of \(R[t]\) iff \(tI\subseteq R[t](t-a) + I\). 
		For the forward implication, note that \(tI\subseteq R[t](t-a) + I\) implies  
		\[Ia\subseteq I(t-a)+tI\subseteq R[t](t-a) + I.\]
		It follows that \(Ia\subseteq I\), that is, \(I\) is \(a\)-closed. 
		
		For the converse, \(Ia\subseteq I\) implies
		\[tI\subseteq I(t-a)+Ia\subseteq I(t-a)+I\subseteq R[t](t-a) + I,\]
		proving that \(R[t](t-a) + I\) is a left ideal of \(R[t]\). 
	\end{proof}
	\begin{proposition}\label{prp:a-saturated-R[t]}
		For every left ideal \(I\subseteq R\) and \(a\in R\), we have 
		\[
		(I:a^\infty) = \bigl( I[t] + R[t](1-at)\bigr) \cap R.
		\]
		In particular, \(I\) is \(a\)-saturated iff
		\[
		I = \bigl( I[t] + R[t](1-at)\bigr) \cap R.
		\]
	\end{proposition}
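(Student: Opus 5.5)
We prove the two inclusions separately. The ``in particular'' part then follows at once: by Proposition~\ref{prp:a-saturation}, \(I\) is \(a\)-saturated iff \(I=(I:a^\infty)\).

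\emph{Inclusion \(\subseteq\).} Let \(r\in(I+Ia+\cdots+Ia^n:a^n)\). Then \(ra^n=s_0+s_1a+\cdots+s_na^n\) with \(s_i\in I\). We work in \(R[t]\) modulo \(L\colonequals I[t]+R[t](1-at)\). This set is only a left \(R[t]\)-submodule plus \(I[t]\), so we track elements directly. First, \(t^n a^n\equiv 1 \pmod{R[t](1-at)}\), since \(1-t^na^n=(1+ta+\cdots+t^{n-1}a^{n-1})(1-ta)\) and \(t\) is central. Hence
\[
r\equiv r t^n a^n = t^n(ra^n)=\sum_i s_i t^n a^i .
\]
For each \(i\),
\[
t^na^i=t^{n-i}(t^ia^i)\equiv t^{n-i} \pmod{R[t](1-at)},
\]
and the left factor \(s_i\) preserves this congruence because \(R[t](1-at)\) is a left ideal. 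Therefore
\[
r\equiv \sum_i s_it^{n-i}\in I[t],
\]
so \(r\in L\cap R\).

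\emph{Inclusion \(\supseteq\).} Let \(r=g+h(1-at)\in R\) with \(g\in I[t]\) and \(h=\sum_{j=0}^m h_jt^j\in R[t]\). We compare coefficients of \(t^k\), using \(h(1-at)=\sum_j h_jt^j-\sum_j h_ja t^{j+1}\). Write \(g=\sum g_kt^k\) with \(g_k\in I\). This gives:
\[
r=g_0+h_0,\qquad 0=g_k+h_k-h_{k-1}a \ (1\le k\le m),\qquad 0=g_{m+1}-h_ma .
\]
So \(h_k\equiv h_{k-1}a \pmod I\) for \(k\le m\), and \(h_ma\in I\). Inductively,
\[
h_k \in h_0a^k+I+Ia+\cdots+Ia^{k-1},
\]
and more precisely \(h_k=h_0a^k-\sum_{j=1}^{k} g_j a^{k-j}\). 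Multiplying the last relation on the right by \(a\) gives
\[
h_0a^{m+1}\in I+Ia+\cdots+Ia^{m}.
\]
Since \(r=g_0+h_0\), we get \(ra^{m+1}=g_0a^{m+1}+h_0a^{m+1}\in I+Ia+\cdots+Ia^{m+1}\). Hence \(r\in(I:a^\infty)\).

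The main obstacle is this coefficient bookkeeping in the reverse inclusion, in particular keeping the order of multiplication correct: \(a\) always appears on the right of coefficients, because \(t\) is central and \(h(1-at)\) has coefficients \(h_j\) and \(-h_ja\). If the recursion becomes messy, an alternative is to reduce modulo \(I[t]\) and use the left \(R\)-module map \(\phi:R[t]\to R/Ia^\infty\)-type quotients. The direct recursion above should suffice.
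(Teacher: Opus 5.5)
Your proof is correct, but it follows a more elementary route than the paper; in fact it produces the same witnesses. The paper goes through the Laurent ring. For \(\subseteq\), it writes \(rt^n-f(t)=g(t)(t-a)\) via the factor theorem for right evaluation, then applies the reversal \(t\mapsto t^{-1}\) and multiplies by \(t^n\) to turn \(t-a\) into \(1-at\). For \(\supseteq\), it reverses \(r=f+g(1-at)\) into \(rt^n=f_1(t)+g_1(t)(t-a)\) and evaluates at \(a\).

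You never leave \(R[t]\). In the first inclusion, the telescoping identity \(1-t^ia^i=(1+ta+\cdots+t^{i-1}a^{i-1})(1-at)\) replaces factor theorem plus reversal. In the second, the recursion \(h_k=h_{k-1}a-g_k\) on coefficients replaces reversal plus evaluation. Your \(\sum_i s_it^{n-i}\) is exactly the paper's \(f(t^{-1})t^n\). Your identity \(ra^{m+1}=\sum_k g_ka^{m+1-k}\) is the paper's \(f_1(a)\) with \(n=m+1\).

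What each buys: the paper's argument is shorter and reuses the evaluation machinery; the same reversal trick reappears in Proposition~\ref{prp:int-char}. However, it tacitly works in \(R[t,t^{-1}]\) and needs degree bounds so that the reversed expressions are polynomials. Yours is self-contained and fully explicit, at the cost of index bookkeeping.

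Some cosmetic points:
\begin{itemize}
\item \(I[t]+R[t](1-at)\) is in fact a left ideal of \(R[t]\), because \(I[t]\) is one. So your caveat about it is unnecessary, though harmless, since you only ever work modulo the left ideal \(R[t](1-at)\).
\item In the reverse inclusion, say explicitly that comparing coefficients of \(t^k\) for \(k\ge m+2\) forces \(g_k=0\). Then your list of relations is complete, and \(h=0\) is covered by taking \(m=0\).
\item The closing remark about an alternative via quotient maps is not needed.
\end{itemize}
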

	\begin{proof}
		Let \(r\in (I:a^\infty)\). By Proposition~\ref{prp:a-saturation}, there exists \(n\geq 0\) such that \(r\in (\,I + Ia + \cdots + Ia^{n} : a^{n}\,)\). It follows that 
		\[
		ra^n = r_0+r_1a+\cdots+r_na^n
		\]
		for some \(r_0,\dots,r_n\in I\). Setting \(f(t) = r_0+r_1t+\cdots+r_nt^n\), we obtain
		\[
		rt^n-f(t) = g(t)(t-a) 
		\]
		for some \(g\in R[t]\) with \(\deg g < n\). Replacing \(t\) by \(t^{-1}\) and multiplying by \(t^n\) gives
		\[
		r = f(t^{-1})t^n + g(t^{-1})t^{n-1}(1-at)\in I[t] + R[t](1-at).
		\]
		This shows that \((I:a^\infty) \subseteq \bigl( I[t] + R[t](1-at)\bigr) \cap R\). 
		\newline
		
		Let \(r\in \bigl( I[t] + R[t](1-at)\bigr) \cap R\). There exist \(f\in I[t]\) and \(g\in R[t]\) such that \(r=f(t)+g(t)(1-at)\). Replacing \(t\) by \(t^{-1}\) and multiplying by \(t^n\), where \(n =\max (\deg f, \deg g)+1\), yields
		\[
		rt^n = f_1(t)+g_1(t)(t-a),
		\]
		where \(f_1(t) = f(t^{-1})t^n\in I[t]\) and \(g_1(t) = g(t^{-1})t^{n-1}\in R[t]\) are of degree \(\le n\). Setting \(f_1(t) = r_0+r_1t+\cdots+r_nt^n\) and evaluating at \(a\), we obtain
		\[
		ra^n = r_0+r_1a+\cdots+r_na^n
		\]
		for some \(r_0,\dots,r_n\in I\). It follows from Proposition~\ref{prp:a-saturation} that \(r\in (I:a^\infty)\), completing the proof. 
	\end{proof}
	\begin{remark} Strictly speaking, the proof takes place in the Laurent polynomial ring \(R[t,t^{-1}]\) and uses the \(R\)-automorphism that sends \(t\) to \(t^{-1}\). To streamline the exposition, we work directly with this automorphism without explicitly mentioning \(R[t,t^{-1}]\). 
	\end{remark}
	\section{Integral dependence over one-sided ideals}\label{sec:int}
	
	This section presents the central concept of the paper, integral dependence over a one-sided ideal, together with its basic properties and introduces the notion of the integral radical of a one-sided ideal.   
	
	\subsection{Integral dependence}
	An element \(a\) of a ring \(R\) is said to be \emph{integral over a subset \(S\subseteq R\)}  if there exists \(n\ge 1\) such that \(a^n\) can be expressed as a sum of products \(s_1\cdots s_k\), where each \(s_i\in S\cup \{a\}\), at least one factor \(s_i\) in each product belongs to \(S\), and the number of occurrences of \(a\) in each product is strictly less than \(n\). A subset \(A\) of \(R\) is said to be \emph{integral over} \(S\subseteq R\) if every element of \(A\) is integral over \(S\).
	
	We also need another notion of integral dependence. A subset \(A\) of a ring \(R\) is called \emph{fully integral over a subset \(S\subseteq R\)} if there exists \(n\ge 1\) such that for any  \(a_1,\ldots,a_n\in A\), the element \(a_1\cdots a_n\) 
	can be expressed as a sum of products \(s_1\cdots s_k\), where each \(s_i\in S\cup \{a_1,\dots,a_n\}\), at least one factor \(s_i\) in each product belongs to \(S\), and the total number of occurrences of \(a_1,\dots,a_n\) in each product is strictly less than \(n\). It is clear that full integrality implies integrality. 
	\begin{remark}
		These notions of integral dependence go back to Schelter \cite{schelter1976integral,schelter1976integral-errata,pare1978finite}. 
	\end{remark}

	In this paper, we are interested in integral dependence over one-sided ideals, primarily because of the following proposition.
	\begin{proposition}\label{prp:def-int}
		Let \(I\) be a left (resp., right) ideal of \(R\). Then
		an element \(a\in R\) is integral over \(I\) iff there exist \(r_0,\dots,r_{n-1}\in I\) such that 
		\[ a^n+r_{n-1}a^{n-1}+\cdots+r_1a+r_0=0 \quad (\text{resp., } a^n+a^{n-1}r_{n-1}+\cdots+ar_1+r_0=0). \]  
	\end{proposition}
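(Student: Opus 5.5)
The plan is to treat the left-ideal case; the right-ideal case then follows by passing to the opposite ring \(R^{\mathrm{op}}\), where right ideals become left ideals and products reverse. For the direction (\(\Leftarrow\)), suppose \(a^n+r_{n-1}a^{n-1}+\cdots+r_0=0\) with \(r_i\in I\). Then
\[
a^n=\sum_{i<n}(-r_i)a^i .
\]
Each term \((-r_i)a^i\) is a product of one factor \(-r_i\in I\) and \(i<n\) copies of \(a\), so it is a product of the required type. (If one prefers factors literally in \(S\), note that \(-r_i\in I\).) Hence \(a\) is integral over \(I\).

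For (\(\Rightarrow\)), suppose \(a^n=\sum_j p_j\), where each \(p_j=s_1\cdots s_k\) has every \(s_l\in I\cup\{a\}\), at least one factor in \(I\), and fewer than \(n\) occurrences of \(a\). The key step is to normalize each \(p_j\). Let \(s_l\) be the \emph{last} factor of \(p_j\) that lies in \(I\). Then
\[
p_j=(s_1\cdots s_{l-1})\,s_l\,a^{m},
\]
where \(m\) is the number of factors after position \(l\), all of which equal \(a\). Since \(I\) is a left ideal, \(x\colonequals s_1\cdots s_{l-1}s_l\in I\): left multiplication by the arbitrary element \(s_1\cdots s_{l-1}\in R\) preserves \(I\). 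Moreover \(m\le\) (the number of occurrences of \(a\) in \(p_j\)) \(<n\). So \(p_j\in Ia^{m}\) with \(m<n\).

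Next, group the terms by the exponent \(m\), using that \(Ia^m\) is closed under addition. This gives
\[
a^n=\sum_{i=0}^{n-1}x_ia^i \qquad\text{with } x_i\in I.
\]
Setting \(r_i=-x_i\) yields the desired monic equation.

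The only point requiring care is to choose the last \(I\)-factor rather than the first. This choice ensures that everything to its left can be absorbed into \(I\) by the left-ideal property, and that the exponent of the trailing power of \(a\) is bounded by the count of \(a\)'s. Once this is done, the argument is routine. For the right-ideal version, I would use the first \(I\)-factor instead, producing terms in \(a^mI\), or simply invoke \(R^{\mathrm{op}}\) as above.
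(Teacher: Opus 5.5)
Your proof is correct and follows the paper's argument: the converse is immediate, and for the forward direction each product is rewritten as an element of \(Ia^i\) with \(i<n\) using that \(I\) is a left ideal. Your choice of the last \(I\)-factor simply spells out the step the paper states in one line, and your treatment of the right-ideal case via \(R^{\mathrm{op}}\) (or the first \(I\)-factor) is what the paper's ``analogous'' amounts to.
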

	\begin{proof}
		We give the proof for left ideals; the right-sided case is analogous. For the forward implication, let \(a\in R\) be integral over a left ideal \(I\). Then there exists \(n\ge 1\) such that \(a^n\) can be expressed as a sum of products \(s_1\cdots s_k\), where each \(s_i\in I\cup \{a\}\), at least one factor \(s_i\) in each product belongs to \(S\), and the total number of occurrences of \(a\) in each product is strictly less than \(n\). Since \(I\) is a left ideal, each such product can be written as \(ra^i\) for some \(r\in I\) and \(0\le i < n\). This gives the required equation. The converse is immediate.
	\end{proof}
	\begin{remark}
		Since we will work mainly with left ideals, we leave it to the reader to formulate the analogous right-handed versions of the constructions and results.
	\end{remark} 
	
	\begin{remark}
		In the commutative setting, there is a classical notion of integrality over an ideal that differs from the one considered in this paper. More precisely,  an element \(a\) of a commutative ring \(R\) is said to be \emph{integral over an ideal} \(I\) of \(R\) if there exists \(n\ge 1\) such that \begin{equation}\label{eq:int-ideal}
			a^n+r_1a^{n-1}+\cdots+r_{n-1}a+r_n=0, 
		\end{equation} 
		where \(r_i\in I^i\) for each \(i=1,\dots,n\). Here, \(I^i\) denotes the ideal generated by all products \(x_1\cdots x_i\), where \(x_1,\dots,x_i\in I\). This construction gives rise to the notion of the \emph{integral closure} of an ideal, which provides a natural enlargement of an ideal while preserving many of its essential asymptotic and geometric properties. For a detailed account of this theory, we refer the reader to \cite{HunekeIntegral2006}. 
	\end{remark}
	The following proposition is an analogue of Proposition~\ref{prp:def-int} for full integrality over left ideals. The proof is similar to that of Proposition~\ref{prp:def-int} and is therefore left to the reader.
	\begin{proposition}\label{prp:def-fully-int}
		Let \(I\) be a left ideal of \(R\). Then a subset \(A\subseteq R\) is fully integral over \(I\) if there exists \(n\ge 1\) such that, for any \(a_1,\dots,a_n\in A\), we have
		\[
		a_1\cdots a_n\in \sum_{k=0}^{n-1}\,\,\sum_{1\le i_1,\ldots,i_k\le n} Ia_{i_1}\cdots a_{i_k}.
		\]
	\end{proposition}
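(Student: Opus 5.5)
The statement is really an ``iff'': it says that full integrality of \(A\) over the left ideal \(I\) is equivalent to the membership
\[
a_1\cdots a_n\in \sum_{k=0}^{n-1}\,\,\sum_{1\le i_1,\ldots,i_k\le n} Ia_{i_1}\cdots a_{i_k}
\]
holding for all \(a_1,\dots,a_n\in A\), with one fixed \(n\). I would prove both directions along the lines of Proposition~\ref{prp:def-int}, keeping the same \(n\) throughout.

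\emph{Forward direction.} Assume \(A\) is fully integral over \(I\) with exponent \(n\), and fix \(a_1,\dots,a_n\in A\). Then \(a_1\cdots a_n\) is a sum of products \(s_1\cdots s_k\). In each product every \(s_j\) lies in \(I\cup\{a_1,\dots,a_n\}\), at least one factor lies in \(I\), and at most \(n-1\) factors come from \(\{a_1,\dots,a_n\}\). Take such a product and let \(j_0\) be the \emph{largest} index with \(s_{j_0}\in I\). Since \(I\) is a left ideal, \(s_1\cdots s_{j_0}\) lies in \(I\). All factors after position \(j_0\) are among the \(a_i\), so the product equals \(r\,a_{i_1}\cdots a_{i_k}\) with \(r\in I\). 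Here \(k\) counts only the trailing \(a\)-factors, which is at most the total number of \(a\)-factors, hence \(k\le n-1\). Summing over all the products gives the displayed membership.

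\emph{Converse direction.} Assume the membership holds for all \(a_1,\dots,a_n\in A\). Each term \(r\,a_{i_1}\cdots a_{i_k}\), with \(r\in I\) and \(k<n\), is already a product \(s_1\cdots s_{k+1}\) of the admissible form. Its first factor lies in \(I\), the remaining factors lie in \(\{a_1,\dots,a_n\}\), and it contains \(k<n\) occurrences of the \(a_i\). So the membership is exactly a full-integrality expression, with the same \(n\).

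\emph{Main obstacle.} Neither direction involves real difficulty; the argument is the same bookkeeping as in Proposition~\ref{prp:def-int}. The one point to handle carefully is in the forward direction. A factor \(s_j\) could equal some \(a_i\) and also happen to lie in \(I\), so each factor should be classified using the given expression itself: by whether it was written as an element of \(I\) or as one of the \(a_i\). With that convention, when the prefix is absorbed into \(I\), the \(a\)-factors absorbed with it only reduce the count. The count of the trailing factors therefore stays strictly below \(n\), and the index set \(1\le i_1,\dots,i_k\le n\) in the displayed sum is respected.
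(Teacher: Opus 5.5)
Your proof is correct and follows the route the paper intends. The paper omits the argument as analogous to Proposition~\ref{prp:def-int}: use that \(I\) is a left ideal to absorb each product, up to its last \(I\)-factor, into an element \(r\in I\), leaving \(r\,a_{i_1}\cdots a_{i_k}\) with \(k<n\); the converse is immediate. Reading the statement as an ``iff'' and classifying each factor by how it appears in the given expression is the right way to make that bookkeeping precise.
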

	From now on, we shall use Propositions~\ref{prp:def-int} and~\ref{prp:def-fully-int} in place of the original definitions when working with integral dependence over one-sided ideals.
	
	Let us recall two standard ring-theoretic notions related to the above concepts. A subset \(A\subseteq R\) is called \emph{nil} over a left ideal \(I\subseteq R\) if every \(a\in A\) is nil over \(I\), that is, for each \(a\in A\) there exists \(n\ge 1\) such that \(a^n\in I\). A subset \(A\subseteq R\) is called \emph{nilpotent} over a left ideal \(I\subseteq R\) if there exists \(n\ge 1\) such that \(A^n\subseteq I\), where 
	\[ A^n \colonequals \{a_1\cdots a_n \mid a_1,\ldots,a_n \in A\}. \]
	In general, nilness over \(I\) implies integrality over \(I\), but the converse does not hold. Likewise, nilpotence over \(I\) implies full integrality over \(I\), but the converse may fail. When \(I\) is a two-sided ideal, however, these notions coincide: being nil over \(I\) is equivalent to being integral over \(I\), and nilpotence over \(I\) is equivalent to full integrality over \(I\).
	
	We now record some basic properties of integral dependence over left ideals. The proof of the following proposition is straightforward and is therefore omitted.
	\begin{proposition}\label{prp:int-quotient-ring}
		Let \(J\) be a two-sided ideal in \(R\). For any left ideal \(I\supseteq J\) and \(a\in R\), the element \(a\) is integral over \(I\) iff \(a+J\) is integral over \(I/J\). 
	\end{proposition}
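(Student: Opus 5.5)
The plan is to work directly with the characterization in Proposition~\ref{prp:def-int}: \(a\) is integral over a left ideal \(I\) iff \(a^n+r_{n-1}a^{n-1}+\cdots+r_1a+r_0=0\) for some \(n\ge1\) and \(r_i\in I\). Let \(\pi\colon R\to R/J\) be the canonical surjection. Since \(J\subseteq I\), the set \(I/J=\pi(I)\) is a left ideal of \(R/J\), so the same characterization applies to \(a+J\) over \(I/J\) in the ring \(R/J\). Both directions then reduce to transporting such an equation along \(\pi\).

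For the forward implication, I will apply \(\pi\), which is a ring homomorphism, to an equation \(a^n+\sum_{i<n} r_ia^i=0\) with \(r_i\in I\). This gives \((a+J)^n+\sum_{i<n}(r_i+J)(a+J)^i=0\) in \(R/J\), with each \(r_i+J\in I/J\). Hence \(a+J\) is integral over \(I/J\).

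For the converse, suppose \((a+J)^n+\sum_{i<n}(r_i+J)(a+J)^i=0\) with \(r_i+J\in I/J\). Every element of \(I/J\) has a representative in \(I\), so I may take each \(r_i\in I\). Lifting the equation gives \(a^n+\sum_{i<n} r_ia^i=j\) for some \(j\in J\). I then absorb \(j\) into the constant term by setting \(r_0'=r_0-j\). Since \(J\subseteq I\), we have \(r_0'\in I\), and \(a^n+r_{n-1}a^{n-1}+\cdots+r_1a+r_0'=0\). By Proposition~\ref{prp:def-int}, \(a\) is integral over \(I\).

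I expect no real obstacle here. The one point needing care is the converse: the lifted equation holds only modulo \(J\), and it is exactly the hypothesis \(J\subseteq I\) that allows the error term to be absorbed into \(r_0\). The same argument applies verbatim to right ideals, using the right-handed form of Proposition~\ref{prp:def-int}.
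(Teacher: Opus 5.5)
Your proof is correct and is the routine verification the paper has in mind; the paper omits this proof as straightforward. You push the monic equation of Proposition~\ref{prp:def-int} through the quotient map, and for the converse you lift it and absorb the resulting error term \(j\in J\) into the constant coefficient using \(J\subseteq I\), which is exactly what makes it work.
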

	We next describe integral dependence over maximal left ideals. 
	\begin{proposition}\label{prp:int-max-left}
		Let \(M\) be a maximal left ideal in \(R\). Then an element \(a\in R\) is integral over \(M\) iff \((M:a)\neq M\).  
	\end{proposition}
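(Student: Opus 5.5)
We prove both implications, relying on the characterization of integrality in Proposition~\ref{prp:def-int} and on maximality of \(M\).

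\emph{Forward direction.} Suppose \(a\) is integral over \(M\). Then
\[
a^n+r_{n-1}a^{n-1}+\cdots+r_1a+r_0=0
\]
for some \(r_i\in M\). Assume, for contradiction, that \((M:a)=M\). By Part~(5) of Proposition~\ref{prp:colon-basic-properties}, \((M:a^k)=((M:a^{k-1}):a)\), so induction gives \((M:a^k)=M\) for every \(k\ge 0\). In particular, \(M\) is \(a\)-closed, and hence \(Ma^k\subseteq M\) for all \(k\).

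Next write \(a^n=-(r_0+r_1a+\cdots+r_{n-1}a^{n-1})\). Each term \(r_ia^i\) lies in \(Ma^i\subseteq M\), so \(a^n\in M\). Therefore \(1\in (M:a^n)=M\), which contradicts the fact that \(M\) is proper. Hence \((M:a)\neq M\).

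\emph{Converse.} Suppose \((M:a)\neq M\). There are two cases.

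\begin{itemize}
\item If \(M\not\subseteq (M:a)\), pick \(m\in M\) with \(ma\notin M\). By maximality, \(M+Rma=R\). So \(1=m'+sma\) for some \(m'\in M\) and \(s\in R\). Multiplying on the right by \(a\) gives \(a=m'a+sma^2\). This by itself is not yet a monic equation.
\item Otherwise \(M\subsetneq (M:a)\). Since \((M:a)\) is a left ideal by Part~(1) of Proposition~\ref{prp:colon-basic-properties}, maximality forces \((M:a)=R\). Then \(a=1\cdot a\in M\), and \(a\) satisfies \(a^1-a=0\) with \(-a\in M\). So \(a\) is integral.
\end{itemize}

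It remains to handle the first case, where \(M\) is not \(a\)-closed. Consider the \(a\)-closure \(Ma^\infty\supsetneq M\). By maximality, \(Ma^\infty=R\). So \(1=\sum_{i=0}^k m_ia^i\) with \(m_i\in M\). Multiplying on the right by \(a^{k+1}\) gives
\[
a^{k+1}=\sum_{i=0}^{k} m_ia^{i+k+1}.
\]
The powers on the right are too high to serve directly as a monic relation. Instead, rearrange \(1=\sum m_ia^i\) as
\[
1-m_0=\sum_{i\ge1}m_ia^i.
\]
This exhibits \(1\) as an element of \(M+Ma+\cdots+Ma^k\). To obtain an equation of the form \(a^N+\sum_{j<N} r_ja^j=0\), multiply \(1=\sum_{i=0}^k m_ia^i\) on the left by \(a^k\)? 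That fails, because \(M\) is only a left ideal.

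The right approach uses the \(R[t]\) formulation. The polynomial \(f(t)=\sum m_it^i\in M[t]\) satisfies \(f(a)=1\). Set \(g(t)=t^{k}-t^kf(t)\)? Its coefficients are not in \(M\), so this also fails.

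The cleanest route: the identity \(1=\sum_i m_ia^i\) shows \(1-m_0\in\sum_{i\ge 1}Ma^i\). Now multiply on the right by \(a^{k}\), and repeatedly substitute the relation \(a^j=\sum_i m_ia^{i+j}\). This shows that \(a^{j}\) lies in \(M+Ma+\cdots+Ma^{j-1}\) modulo higher powers. The key obstacle is reversing the direction of degrees, that is, producing a relation whose top term is a bare power of \(a\) with lower-degree coefficients in \(M\).

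I would attack this via Proposition~\ref{prp:a-saturated-R[t]}, using the substitution \(t\mapsto t^{-1}\) from its proof, which exchanges high and low degrees. Write \(f(t)=\sum_{i=0}^k m_it^i\) with \(f(a)=1\), so that \(1-f(t)\in R[t](t-a)\). Replacing \(t\) by \(t^{-1}\) and multiplying by \(t^{k}\) converts this into
\[
t^k-\sum_i m_it^{k-i}\in R[t](1-at).
\]
This is a relation of the form \(t^k\in M[t]+R[t](1-at)\), and I would then translate it back into the required monic equation for \(a\). This translation is the step I expect to be hardest.
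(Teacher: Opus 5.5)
\textbf{There is a genuine gap: the main case of your converse is never finished.} Your forward direction is correct. It also differs slightly from the paper's: the paper splits on whether \(Ma\subseteq M\) and uses maximality to force \(a\in M\), whereas you only use that \(M\) is proper. The sub-case \(M\subsetneq (M:a)\) of your converse is also fine.

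The gap is the case \(M\not\subseteq (M:a)\). You correctly reach \(Ma^\infty=R\) and \(1=\sum_{i=0}^k m_ia^i\) with \(m_i\in M\), but you never produce a monic relation. You also discard the right move for a wrong reason. You say that multiplying on the left by a power of \(a\) fails ``because \(M\) is only a left ideal''. But a left ideal is exactly what is closed under left multiplication, so \(a^jm_i\in M\) for every \(j\). As written, the converse is unproved, and your \(R[t]\) sketch stops at the very step you flag as hardest.

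The missing step is one line. Multiply \(1=\sum_{i=0}^k m_ia^i\) on the left by \(a^{k+1}\):
\[
a^{k+1}=\sum_{i=0}^{k}\bigl(a^{k+1}m_i\bigr)a^i,\qquad a^{k+1}m_i\in M .
\]
This is an equation of the form in Proposition~\ref{prp:def-int} with \(n=k+1\). Equivalently, \(M+Ma+\cdots+Ma^k\) is a left ideal containing \(1\), hence it contains \(a^{k+1}\).

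The same move already finishes your first bullet. There \(1=m'+(sm)a\in M+Ma\), and left multiplication by \(a^2\) gives \(a^2\in M+Ma\).

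This is precisely the trick in step (5)\(\Rightarrow\)(1) of Proposition~\ref{prp:int-char}. The paper simply notes \(M+Ma=R\), hence \(a\in Ma^\infty=R\), and cites that proposition. So the detour through \(t\mapsto t^{-1}\) is unnecessary.
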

	\begin{proof}
		To prove the forward implication, let \(a\in R\) be integral over \(M\).  If \(Ma\nsubseteq M\), then \((M:a)\neq M\), and we are done. Suppose 
		\(Ma\subseteq M\). Then \(Ma^\infty  = M\), and therefore, \(a^n\in M\) for some \(n\ge 1\) such that \(a^{n-1}\notin M\). By the maximality of \(M\), there exists \(r\in R\) such that \(1-ra^{n-1}\in M\). It follows that 
		\[
		a-ra^n=(1-ra^{n-1})a\in Ma \subseteq M.
		\]
		Hence \(a\in M\), and consequently, \((M:a)=R\neq M\).  This completes the proof of the forward direction. 
		
		Conversely, suppose \((M:a)\neq M\). If \(M\subseteq (M:a)\), then \((M:a)=R\), that is, \(a\in M\), which is integral over \(M\). Suppose \(M\nsubseteq (M:a)\). Then the left ideal \(M+Ma\) properly contains \(M\), which implies that \(M+Ma=R\). Therefore, \(a\in Ma^\infty = R\), and consequently, \(a\) is integral over \(M\). 
	\end{proof}
	It follows from the proposition that the set of elements integral over a left ideal need not be closed under either addition or multiplication (see also Example \ref{exm:matrix-int}). 
	
	Concerning full integrality, we have the following result. 
	
	\begin{proposition}\label{prp:sum-fully-int}
		Let \(I,J\) be left ideals in \(R\). If \(J\) is fully integral over \(I\), then so is \(I+J\). 
	\end{proposition}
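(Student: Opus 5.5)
We use the criterion of Proposition~\ref{prp:def-fully-int}. Let \(n\ge1\) be the exponent witnessing full integrality of \(J\) over \(I\): for all \(b_1,\dots,b_n\in J\),
\[
b_1\cdots b_n\in \sum_{k<n}\,\sum_{i_1,\dots,i_k} I\,b_{i_1}\cdots b_{i_k}.
\]
We show that \(I+J\) is fully integral over \(I\) with the same \(n\). So we must show that for all \(c_1,\dots,c_n\in I+J\), the product \(c_1\cdots c_n\) lies in
\[
N\colonequals \sum_{k<n}\,\sum_{i_1,\dots,i_k} I\,c_{i_1}\cdots c_{i_k}.
\]
Note that \(N\) is a left ideal, since \(RI=I\), and it contains \(I\) via the term \(k=0\).

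Write \(c_j=x_j+b_j\) with \(x_j\in I\) and \(b_j\in J\), and expand \(c_1\cdots c_n\) as a sum of \(2^n\) words. Consider a word in which some factor is an \(x_j\). Let \(j\) be the largest such index. Since \(I\) is a left ideal, the word equals
\[
(\text{element of } R)\,x_j\,b_{j+1}\cdots b_n\in I\,b_{j+1}\cdots b_n,
\]
which has \(n-j<n\) trailing factors. These factors are \(b\)'s rather than \(c\)'s, so we replace \(b_{j+1}\cdots b_n\) by \((c_{j+1}-x_{j+1})\cdots(c_n-x_n)\) and expand again. Each resulting word is a product of \(c\)'s and \(x\)'s. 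Cutting at the last \(x\) gives \(I\) times a product of fewer than \(n\) of the \(c\)'s, and the word with no \(x\) is \(c_{j+1}\cdots c_n\) itself. Hence this word lies in \(N\).

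The only remaining word is \(b_1\cdots b_n\). By hypothesis it lies in \(\sum_{k<n}\sum I\,b_{i_1}\cdots b_{i_k}\). Each term \(I\,b_{i_1}\cdots b_{i_k}\) with \(k<n\) lies in \(N\) by the same substitution: replace each \(b_{i}\) by \(c_{i}-x_{i}\), expand, and absorb everything up to the last \(x\) into \(I\). The surviving products contain at most \(k<n\) factors, each taken from \(\{c_1,\dots,c_n\}\).

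The main point to verify is this absorption lemma. If \(w\) is a word of length \(k<n\) over the alphabet \(\{c_i\}\cup\{x_i\}\), then \(Iw\subseteq N\). We prove it by splitting \(w\) at its last \(x\)-letter and using that \(I\) is a left ideal. Once the lemma holds, the argument above is bookkeeping and gives \(c_1\cdots c_n\in N\), which is full integrality of \(I+J\) over \(I\).
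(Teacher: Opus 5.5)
Your proof is correct and follows essentially the paper's route: expand with \(c_j=x_j+b_j\), absorb each word at its last \(I\)-factor, apply the hypothesis to \(b_1\cdots b_n\), and then substitute \(b_i=c_i-x_i\) back to turn each \(I\,b_{i_1}\cdots b_{i_k}\) with \(k<n\) into a sum of terms \(I\,c_{j_1}\cdots c_{j_l}\) with \(l<n\). The paper packages your ``cut at the last \(x\)'' absorption as two memberships, which in your notation read \(c_1\cdots c_n\in b_1\cdots b_n+Ib_2\cdots b_n+\cdots+Ib_n+I\) and \(b_1\cdots b_n\in c_1\cdots c_n+Ic_2\cdots c_n+\cdots+Ic_n+I\).
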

	\begin{proof}
		By Proposition~\ref{prp:def-fully-int}, there exists \(n\ge 1\) such that for all \(a_1,\dots,a_n\in J\), we have
		\begin{equation}\label{equ:full-int}
			a_1\cdots a_n\in \sum_{k=0}^{n-1}\,\,\sum_{1\le i_1,\ldots,i_k\le n} Ia_{i_1}\cdots a_{i_k}.
		\end{equation}
		To prove that \(I+J\) is fully integral over \(I\), it suffices to show that \eqref{equ:full-int} holds for all elements of \(I+J\). Suppose \(b_1,\dots,b_n\in I+J\), and write \(b_i=r_i+a_i\), where \(r_i\in I\) and \(a_i\in J\). Expanding \(b_1\cdots b_n\), we see that
		\[
		b_1\cdots b_n \in a_1\cdots a_n+Ia_2\cdots a_n+\cdots+Ia_{n-1}a_n+Ia_n+I.
		\]
		Writing \(a_i=-r_i+b_i\) and using a similar argument, we obtain
		\[
		a_1\cdots a_n\in b_1\cdots b_n+Ib_2\cdots b_n+\cdots+Ib_{n-1}b_n+Ib_n+I.
		\]
		Using \eqref{equ:full-int} and these two memberships, we see that \(b_1\cdots b_n\) belongs to
		\[ \begin{aligned} & a_1\cdots a_n+ Ia_2\cdots a_n+\cdots+Ia_{n-1}a_n+Ia_n+I\\ & \subseteq \sum_{k=0}^{n-1}\,\,\sum_{1\le i_1,\ldots,i_k\le n} Ia_{i_1}\cdots a_{i_k}\\ & \subseteq \sum_{k=0}^{n-1}\,\,\sum_{1\le i_1,\ldots,i_k\le n} I\bigl( b_{i_1}\cdots b_{i_k} +Ib_{i_2}\cdots b_{i_k} +\cdots +Ib_{i_k} +I \bigr)\\ & \subseteq \sum_{k=0}^{n-1}\,\,\sum_{1\le i_1,\ldots,i_k\le n} Ib_{i_1}\cdots b_{i_k}, \end{aligned} \]
		which completes the proof.
	\end{proof}
	We also record a transitive property of integrality. 
	\begin{proposition}\label{prp:int-transitivity}
		Let \(I\) and \(J\) be left ideals of a ring \(R\).  If \(a\in R\) is integral over \(I\) and \(I\) is nilpotent over \(J\), then \(a\) is integral over \(J\).
	\end{proposition}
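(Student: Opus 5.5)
We argue with polynomial-type equations, using Proposition~\ref{prp:def-int} throughout. Since \(a\) is integral over \(I\), there are \(r_0,\dots,r_{n-1}\in I\) with
\[
a^n = -(r_{n-1}a^{n-1}+\cdots+r_1a+r_0)\in \sum_{i=0}^{n-1} Ia^i .
\]
Since \(I\) is nilpotent over \(J\), there is \(m\ge 1\) with \(I^m\subseteq J\), meaning every product of \(m\) elements of \(I\) lies in \(J\). The goal is an equation \(a^N\in\sum_{i<N} Ja^i\). By Proposition~\ref{prp:def-int}, this is exactly integrality of \(a\) over \(J\).

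The key step is to iterate the relation \(m\) times and show by induction on \(k\ge1\) that
\[
a^{kn}\in \sum_{i=0}^{kn-1} I^{k} a^i ,
\]
where \(I^k a^i\) denotes the additive span of products \(x_1\cdots x_k a^i\) with \(x_j\in I\). The case \(k=1\) is the given equation. For the inductive step, write \(a^{(k+1)n}=a^{kn}a^n\) and substitute the expression for \(a^{kn}\). This gives a sum of terms \(x\,a^{i}a^{n}\) with \(x\in I^k\) and \(i<kn\). Each \(a^n\) is then replaced by \(\sum_j r_j a^j\), producing terms \(x\,a^i r_j a^j\).

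Here lies the obstacle: \(a^i r_j\) is not in \(I\) as written, because \(I\) is only a left ideal, so \(I\) is not stable under right multiplication by \(a^i\). I would fix this by reordering the substitution so that new \(I\)-factors always appear on the left. Write \(a^{(k+1)n}=a^n a^{kn}\), substitute \(a^{kn}\in\sum_{i<kn} I^k a^i\) first, and then handle the leftover factor \(a^n\) sitting in front of elements of \(I\).

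That leftover \(a^n x\) with \(x\in I\) still causes trouble. A cleaner route is to reduce the exponent one power at a time from the right. Start from \(a^{N}\) and repeatedly apply \(a^n\in\sum_{i<n} Ia^i\) to the \emph{leftmost} block \(a^n\) of a word \(y\,a^{s}\), where \(y\) is a product of elements of \(I\) and \(s\ge n\). The result is
\[
y\,a^{s}=y\,a^{n}a^{s-n}\in\sum_{i<n} y\,I\,a^{i+s-n}.
\]
The number of \(I\)-factors increases by one, and the power of \(a\) drops by at least one. Crucially, \(yI\) is again a product of elements of \(I\), and everything stays in the form "product of elements of \(I\), then a power of \(a\)". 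No commutation of \(a\) past an element of \(I\) is ever needed.

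Take \(N=mn\). Each reduction step removes at least one \(a\), and a term stops being reduced only when its \(a\)-exponent drops below \(n\). Every term whose exponent reaches some value \(s\) has undergone at least \((N-s)/(n-1)\) reductions, and I would organise the process so that we stop reducing a term as soon as it has \(m\) factors from \(I\). When we stop, each term is either:
\begin{enumerate}
\item \(y\,a^{s}\) with \(y\in I^m\subseteq J\) and \(s\le N-m<N\), since each of the \(m\) reductions lowered the exponent by at least one; or
\item \(y\,a^{s}\) with fewer than \(m\) factors but \(s<n\le N-1\).
\end{enumerate}

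Case~(2) is the genuine difficulty, since such \(y\) need not lie in \(J\). I would handle it by choosing \(N\) larger, say \(N=mn\), and observing that reaching an exponent below \(n\) from \(N=mn\) requires lowering the exponent by more than \(mn-n\). Each reduction lowers it by at most \(n\), so at least \(m-1\) reductions have occurred. With \(N=(m+1)n\), at least \(m\) reductions are forced, so \(y\in I^m\subseteq J\) in every terminal term.

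Thus \(a^{N}\in\sum_{s<N} Ja^s\), because \(J\) absorbs the scalar coefficients on the left. Proposition~\ref{prp:def-int} then gives that \(a\) is integral over \(J\). The bookkeeping in this counting step, checking that each stopped term has at least \(m\) factors from \(I\) and exponent strictly less than \(N\), is the main point to verify carefully.
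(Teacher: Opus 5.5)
Your final argument is correct, but the obstacle you cite for rejecting the direct substitution comes from a left/right slip. Since $I$ is a \emph{left} ideal, $a^i r_j\in RI\subseteq I$, and likewise $a^n x\in I$ for $x\in I$. So your first induction goes through as written.

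That direct route is exactly the paper's proof. It expands $a^{nm}=\bigl(\sum_{i<n} r_ia^i\bigr)^m$ and regroups each term as $r_{i_1}(a^{i_1}r_{i_2})\cdots(a^{i_{m-1}}r_{i_m})\,a^{i_m}\in I^m a^{i_m}$. This gives, in one line, the sharper relation $a^{nm}\in\sum_{i=0}^{n-1}Ja^i$, with all lower terms of degree below $n$.

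Your leftmost-block reduction is a genuinely different mechanism. Each step adds one $I$-factor on the left and lowers the exponent by between $1$ and $n$, and no element of $I$ ever has to be moved past a power of $a$. Once $a^n\in\sum_{i<n}Ia^i$ is in hand, it uses only $I^m\subseteq J$ and never that $I$ absorbs left multiplication. So the same argument works for any subset $I$ satisfying such a relation. The cost is a termination-and-counting argument and looser control on the exponents of the lower terms (you only get $s<N$, not $s<n$).

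Your count is also more conservative than it needs to be. Take $N=mn$. A term with fewer than $m$ factors has undergone at most $m-1$ reductions, each lowering the exponent by at most $n$. So its exponent is at least $mn-(m-1)n=n$, and it cannot be terminal. Hence reaching an exponent below $n$ forces at least $m$ reductions, not merely $m-1$, and $N=mn$ already works. Passing to $N=(m+1)n$ is unnecessary, though harmless.
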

	\begin{proof}
		There exists \(m\ge 1\) such that \(I^m\subseteq J\), and  there exist \(r_0,\dots,r_{n-1}\in I\) such that 
		\[ a^n=r_{n-1}a^{n-1}+\cdots+r_1a+r_0. \] 
		It follows that
		\[
		a^{nm} = \sum_{0\le i_1,\dots,i_m\le {n-1}} r_{i_1}a^{i_1}\cdots r_{i_m}a^{i_m}.
		\]
		Since the right-hand side belongs to \(\sum_{i=0}^{n-1}I^ma^{i}\) and \(I^m\subseteq J\), we conclude that \(a^{nm}\in \sum_{i=0}^{n-1}Ja^{i}\). This completes the proof. 
	\end{proof}
	
	\subsection{Characterizations of integral dependence}
	In the following proposition, we give several characterizations of integral dependence over a left ideal. 
	\begin{proposition}\label{prp:int-char}
		Let \(R\) be a ring. For \(a\in R\) and a left ideal \(I\subseteq R\), the following statements are equivalent:
		\begin{enumerate}
			\item \(a\) is integral over \(I\).
			\item \(a^n\in Ia^\infty\) for some \(n\ge 1\).
			\item \(t^{n} \in I[t] + R[t](t-a)\) for some \(n\ge 1\).
			\item \(I[t] + R[t](1-at) = R[t]\).
			\item \((I:a^\infty) = R\).
			\item The only \(a\)-saturated left ideal containing \(I\) is  \(R\).
		\end{enumerate}
	\end{proposition}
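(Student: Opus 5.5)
I will prove the equivalences along the chain (1)\(\Leftrightarrow\)(2)\(\Leftrightarrow\)(3), then (1)\(\Leftrightarrow\)(5)\(\Leftrightarrow\)(4), and finally (5)\(\Leftrightarrow\)(6). The guiding principle is that everything reduces, via Proposition~\ref{prp:def-int}, to the single membership \(a^n\in I+Ia+\cdots+Ia^{n-1}\), which the earlier propositions translate into the language of \(Ia^\infty\), of \(R[t]\), and of \(a\)-saturation.

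For (1)\(\Rightarrow\)(2), the equation of Proposition~\ref{prp:def-int} gives \(a^n\in I+Ia+\cdots+Ia^{n-1}\subseteq Ia^\infty\). For (2)\(\Rightarrow\)(1), suppose \(a^n\in I+Ia+\cdots+Ia^m\) with possibly \(m\ge n\). Then \(a^{N}\in I+\cdots+Ia^{N-1}\) for \(N=m+1\): write \(a^{N}=a^na^{N-n}\) and substitute, which gives \(a^N\in Ia^{N-n}+\cdots+Ia^{m+N-n}\). That is not yet good enough. Instead I will argue by descending the top power. If \(a^n=\sum_{i\le m} r_ia^i\) with \(m\ge n\), multiply on the right by \(a^{k}\) to get \(a^{n+k}\in\sum_{i\le m} Ia^{i+k}\), and choose \(N\) large. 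The cleaner route is to use (3): by \eqref{eq:evaluation-Ia-infty}, \(a^n=f(a)\) with \(f\in I[t]\), so \(t^n-f(t)\in R[t](t-a)\), which is (3). For (3)\(\Rightarrow\)(1), given \(t^n=f+g(t-a)\) with \(f\in I[t]\), I would reduce \(f\) modulo \(t-a\) degree by degree. However, \(f\) may have degree \(>n\), so this needs care.

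Here the key trick is to multiply by \(t^{k}\) and compare leading coefficients. Write \(f=\sum_{i\le m} r_it^i\) with \(m\ge n\). Then \(t^n-f\) has leading coefficient \(-r_m\in I\), and \(t^n - f\equiv 0\) modulo \(R[t](t-a)\). Evaluating at \(a\) only recovers (2). So instead I will go through (5): show (2)\(\Rightarrow\)(5)\(\Rightarrow\)(1).

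By Proposition~\ref{prp:a-saturated-R[t]}, (4)\(\Leftrightarrow\)(5) is immediate, since \(1\) lies in the intersection with \(R\) iff the left ideal \(I[t]+R[t](1-at)\) of \(R[t]\) is everything. (6)\(\Leftrightarrow\)(5) follows from Proposition~\ref{prp:a-saturation}, because \((I:a^\infty)\) is the smallest \(a\)-saturated left ideal containing \(I\). For (5)\(\Rightarrow\)(1), the condition \(1\in(I:a^\infty)\) means \(a^n\in I+Ia+\cdots+Ia^n\) for some \(n\), i.e. \(a^n=r_na^n+\sum_{i<n}r_ia^i\). This still has an \(I\)-coefficient on \(a^n\), so \(1-r_n\) need not be a unit.

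This is the main obstacle. To handle it, I would iterate the relation: substituting it into itself shows \((1-r_n)a^n\in I+\cdots+Ia^{n-1}\). Then multiply by a power and use the relation again to push the top coefficient into \(I^k\)-type products that remain in \(I\) since \(I\) is a left ideal. Concretely, \(a^{2n}=a^na^n\), and in \(r_na^na^n\) the factor \(a^n\) can be replaced by \(r_na^n+\text{lower}\), giving terms \(r_n a^n r_n a^n\) whose \(a\)-count is \(2n\). I expect this to yield only a Quinn-type expression, so the honest approach is to note that Quinn's definition permits products with \(a\)-count \(<N\). From \(a^n=r_na^n+\dots\) I will therefore try the direct route (5)\(\Rightarrow\)(2)\(\Rightarrow\)(1) via Proposition~\ref{prp:a-saturation}'s description with \(n\) varying freely: pick \(N\) with \(1\in(I+\cdots+Ia^{N}:a^{N})\), and then prove (2)\(\Rightarrow\)(1) by a separate induction showing \(\sum_{i\ge n} Ia^i\subseteq\) the \(a\)-saturation-adjusted span. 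The induction on the top degree is the step I would work out most carefully.
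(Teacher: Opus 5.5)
There is a genuine gap: none of your implications actually lands in (1). You establish (1)$\Rightarrow$(2)$\Rightarrow$(3), (3)$\Rightarrow$(2) by evaluation, and (4)$\Leftrightarrow$(5)$\Leftrightarrow$(6). However, (2)$\Rightarrow$(1), (3)$\Rightarrow$(1) and (5)$\Rightarrow$(1) are each started and abandoned. The announced (2)$\Rightarrow$(5) is also never argued, although that direction is easy. As written, the blocks $\{1,2,3\}$ and $\{4,5,6\}$ are not connected, and nothing returns to (1). You correctly identify the obstacle: an $I$-coefficient on the top power $a^n$, or a right-hand side reaching beyond $a^{n-1}$. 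But neither the self-substitution nor the proposed induction on $\sum_{i\ge n}Ia^i$ resolves it, and the latter is not a precise statement.

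The missing idea is a single observation. Since $I$ is a left ideal, $aI\subseteq I$, so multiplying a relation on the \emph{left} by a power of $a$ raises the power of $a$ on the left-hand side while keeping the right-hand side inside $I+Ia+\cdots+Ia^m$.

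For (5)$\Rightarrow$(1), start from $a^n=r_na^n+\cdots+r_1a+r_0$ with $r_i\in I$. Multiplying on the left by $a$ gives $a^{n+1}=(ar_n)a^n+\cdots+(ar_1)a+ar_0$ with $ar_i\in I$, a monic relation of degree $n+1$. This is exactly the paper's argument.

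The same move gives (2)$\Rightarrow$(1) directly: if $a^n\in I+Ia+\cdots+Ia^m$ with $m\ge n$, then $a^{m+1}=a^{m+1-n}a^n\in I+Ia+\cdots+Ia^m$. Your first attempt multiplied on the right ($a^N=a^na^{N-n}$), which shifts the right-hand side up as well; that is why it was ``not yet good enough''.

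Once this is in place, your arrangement closes. It even bypasses the paper's $t\mapsto t^{-1}$ computation for (3)$\Rightarrow$(4), because (1)$\Rightarrow$(5) is immediate: $a^n\in I+\cdots+Ia^{n-1}$ gives $1\in(I+Ia+\cdots+Ia^n:a^n)\subseteq(I:a^\infty)$.
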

	\begin{proof}
		The implication (1)\(\implies\)(2) follows from Proposition~\ref{prp:def-int}. The equivalence (2)\(\iff\)(3) follows from Equality \eqref{eq:evaluation-Ia-infty}. \\
		(3)\(\implies\)(4): Suppose \(t^n = f(t) + g(t)(t-a)\) for some \(n\geq 1\) and \(f\in I[t], g\in R[t]\). For every \(m\geq 0\), the evaluation of 
		\[
		t^{m+n}-a^mf(t)-a^mg(t)(t-a)
		\]
		at \(a\) is easily seen to be zero, implying
		\[
		t^{m+n}-a^mf(t)-a^mg(t)(t-a)\in R[t](t-a). 
		\]
		Choosing \(m\) big enough gives \(\deg f\le m+n\) and \(\deg g<m+n\). Replacing \(t\) by \(t^{-1}\) and multiplying by \(t^{m+n}\) yields
		\[
		1- a^m f(t^{-1})t^{m+n}-a^mg(t^{-1})t^{m+n-1}(1-at)\in  R[t](1-at), 
		\]
		proving (4).\\
		The equivalence (4)\(\iff\)(5) follows from Proposition~\ref{prp:a-saturated-R[t]}. The equivalence (5)\(\iff\)(6)  is trivial. To finish the proof, we  need to prove the implication (5)\(\implies\)(1).
		Suppose \((I:a^\infty) = R\). It follows from Proposition~\ref{prp:a-saturation} that \((\,I + Ia + \cdots + Ia^{n} : a^{n}\,)=R\) for some \(n\geq 0\), that is, 
		\[
		a^n\in I + Ia + \cdots + Ia^{n}.
		\]
		Therefore, there exist \(r_0,\dots,r_n\in I\) such that 
		\[
		a^n+r_na^n+\cdots+r_1a+r_0=0.
		\]
		Multiplying by \(a\) on the left, we obtain
		\[
		a^{n+1}+(ar_n)a^n+\cdots+(ar_1)a+(ar_0)=0.
		\]
		Since \(ar_i\in I\), we conclude that \(a\) is integral over \(I\). 
	\end{proof}
	As an immediate consequence of the fourth characterization in the proposition, we have the following result. 
	\begin{proposition}\label{prp:a-int-implies-a+b}
		Let \(I\) be a left ideal in \(R\). For any \(a\in R\) and \(b\in I\), \(a\) is integral over \(I\) iff \(a+b\) is integral over \(I\). 
	\end{proposition}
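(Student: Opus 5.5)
The plan is to use characterization~(4) of Proposition~\ref{prp:int-char}: \(a\) is integral over \(I\) iff \(I[t]+R[t](1-at)=R[t]\). It therefore suffices to show that, for \(b\in I\),
\[
I[t]+R[t](1-at)=I[t]+R[t](1-(a+b)t).
\]
By symmetry (replace \(a\) by \(a+b\) and \(b\) by \(-b\in I\)), one inclusion is enough.

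To get it, I write
\[
1-at = \bigl(1-(a+b)t\bigr) + bt.
\]
Since \(t\) is central, \(bt\in I[t]\). Hence \(1-at\in I[t]+R[t](1-(a+b)t)\).

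The step that needs some care is that the right-hand side is a left ideal of \(R[t]\), so that it absorbs \(R[t](1-at)\). The set \(R[t](1-(a+b)t)\) is clearly a left ideal. For \(I[t]\), note that \(I[t]=\sum_k It^k\), and \(t\) is central, so \(R[t]\cdot I[t]\subseteq I[t]\) because \(I\) is a left ideal of \(R\). A sum of left ideals is a left ideal, so the right-hand side is one.

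It then contains \(R[t](1-at)\), and of course \(I[t]\). This gives the inclusion \(\subseteq\), and the reverse follows by the symmetry noted above. Both sides then equal \(R[t]\) simultaneously, which proves the proposition.

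I expect no real obstacle. The only points to check are that \(I[t]\) is a left ideal of \(R[t]\), and that the symmetry argument is legitimate because \(-b\in I\).
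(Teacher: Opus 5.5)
Your proof is correct and follows the paper's approach. The paper states this result as an immediate consequence of characterization~(4) of Proposition~\ref{prp:int-char}. Your identity \(1-at=\bigl(1-(a+b)t\bigr)+bt\) with \(bt\in I[t]\), together with the check that \(I[t]\) is a left ideal of \(R[t]\) and the symmetry via \(-b\in I\), supplies exactly the details the paper leaves implicit.
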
 
	
	It is clear that if \(ab\) is nilpotent over a two-sided ideal, then so is \(ba\). More generally, we have the following result.
	\begin{proposition}\label{prp:int-ab-ba}
		Let \(a,b\in R\) and \(I\subseteq R\) be a left ideal. Then \(ab\) is integral over \(I\) iff \(ba\) is integral over the left ideal \(Ia\).
	\end{proposition}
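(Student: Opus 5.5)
The plan is to work directly with the monic-equation description of integrality from Proposition~\ref{prp:def-int} and to move between powers of \(ab\) and powers of \(ba\) using the identities
\[
b(ab)^i a=(ba)^{i+1}, \qquad a(ba)^i b=(ab)^{i+1}, \qquad i\ge 0.
\]
First I note that \(Ia=\{ra\mid r\in I\}\) is a left ideal: it is an additive subgroup, and \(R(ra)=(Rr)a\subseteq Ia\). Hence Proposition~\ref{prp:def-int} applies to both \(I\) and \(Ia\).

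For the forward implication, suppose \(ab\) is integral over \(I\). Then
\[
(ab)^n=\sum_{i=0}^{n-1} r_i(ab)^i \quad\text{with } r_i\in I.
\]
Multiply this equation on the left by \(b\) and on the right by \(a\). The left side becomes \(b(ab)^n a=(ba)^{n+1}\). Each term on the right becomes \(br_i(ab)^i a=(br_i)(ba)^{i+1}\). Since \(I\) is a left ideal, \(br_i\in I\). I then rewrite \((br_i)(ba)^{i+1}=(br_ia)(ba)^i\), whose coefficient \(br_ia\) lies in \(Ia\). This gives
\[
(ba)^{n+1}=\sum_{i=0}^{n-1}(br_ia)(ba)^i,
\]
a monic equation of degree \(n+1\) with coefficients in \(Ia\). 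All exponents on the right are at most \(n-1<n+1\), so \(ba\) is integral over \(Ia\).

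For the converse, suppose
\[
(ba)^n=\sum_{i=0}^{n-1}(s_ia)(ba)^i \quad\text{with } s_i\in I.
\]
Multiply on the left by \(a\) and on the right by \(b\). The left side becomes \((ab)^{n+1}\). Each term on the right becomes \(as_ia(ba)^ib=(as_i)(ab)^{i+1}\), with \(as_i\in I\). The exponents \(i+1\) are at most \(n<n+1\), so
\[
(ab)^{n+1}=\sum_{i=0}^{n-1}(as_i)(ab)^{i+1}
\]
exhibits \(ab\) as integral over \(I\).

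There is no real obstacle; the argument is short. The only points needing care are checking that \(Ia\) is a left ideal, so that Proposition~\ref{prp:def-int} is available, and keeping track of the degree shift to confirm that the exponents on the right stay strictly below the new leading degree \(n+1\).
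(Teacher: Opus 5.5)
Your proof is correct and follows the paper's approach. The paper sandwiches \((ab)^n\in I(ab)^\infty\) between \(b\) and \(a\), and \((ba)^n\in (Ia)(ba)^\infty\) between \(a\) and \(b\), using the characterization in Proposition~\ref{prp:int-char}(2). You do the same with explicit monic equations, which lets you track degrees directly without invoking that proposition.

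One computational slip in the forward direction needs fixing. The two intermediate equalities \(br_i(ab)^ia=(br_i)(ba)^{i+1}\) and \((br_i)(ba)^{i+1}=(br_ia)(ba)^i\) are both false in general, since each is off by a factor of \(b\). Already for \(i=0\), the first would say \(br_0a=br_0ba\). Their composite happens to be true. The correct justification is the identity \((ab)^ia=a(ba)^i\), which gives \(br_i(ab)^ia=(br_ia)(ba)^i\) in one step, with \(br_ia\in Ia\). Your final displayed equation for \((ba)^{n+1}\) and your converse direction are correct as written.
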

	\begin{proof}
		We use the second characterization in Proposition~\ref{prp:int-char}. Let  \(ab\) be integral over \(I\). Then, for some \(n\geq 1\), 
		\[
		(ab)^n \in I(ab)^\infty\implies (ba)^{n+1}=b(ab)^na\in bI(ab)^\infty a \subseteq (Ia)(ba)^\infty,
		\]
		implying that \(ba\) is  integral over \(Ia\).
		
		Conversely, suppose that \(ba\) is integral  over \(Ia\). Then, for some \(n\geq 1\), we have
		\[
		(ba)^n \in (Ia)(ba)^\infty\implies (ab)^{n+1}=a(ba)^nb\in a(Ia)(ba)^\infty b \subseteq I(ab)^\infty,
		\]
		completing the proof. 
	\end{proof}
	
	The following example describes integrality over left ideals of matrix rings over division rings.     
	
	\begin{example}\label{exm:matrix-int}
		Let \(D\) be a division ring and set \(R=M_n(D)\). Using Part 5 of Proposition~\ref{prp:int-char}\ and the results from Example \ref{exm:matrix-rings-clo-sat-com}, the reader can verify that an element \(A\in R\) is integral over a left ideal \(RB\) iff \(W\nsubseteq A(W)\) for all nonzero right \(D\)-subspaces \(W\) of \(V_B\). It follows that the left ideal \(RA\) is integral over \(RB\) iff \(A\in RB\). 
	\end{example}
	
	\subsection{Integrally closed left ideals}
	In this part, we introduce and study the notion of integrally closed one-sided ideals. 
	\begin{definition}
		A left ideal \(I\) of a ring \(R\) is said to be \emph{integrally closed (in \(R\))} if no left ideal of \(R\) properly containing \(I\) is integral over \(I\).
	\end{definition}
	
	\begin{remark}
		Technically, it should be called \emph{left integrally closed} because when \(I\) is two-sided, there is an ambiguity. However, a two-sided ideal \(I\) is left integrally closed if and only if it is right integrally closed. Indeed, if \(Rr\) is ni over a two-sided ideal \(I\), then \(rR\) is also nil over \(I\). Therefore, there is no need to specify left or right.
	\end{remark}
	The following proposition follows directly from Proposition~\ref{prp:a-int-implies-a+b}. 
	\begin{proposition}\label{prp:1-char-int-closed}
		A left ideal \(I\) is integrally closed in \(R\) iff  \(Ra\) is not integral over \(I\) for every \(a\in R\setminus I\).  
	\end{proposition}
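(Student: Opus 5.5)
We prove the two implications by contraposition, working from the definition: \(I\) is integrally closed iff no left ideal properly containing \(I\) is integral over \(I\). The only earlier result needed is Proposition~\ref{prp:a-int-implies-a+b}, which lets us add elements of \(I\) to an element without affecting integrality over \(I\).

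\emph{Forward direction.} Suppose \(I\) is integrally closed and let \(a\in R\setminus I\). We claim \(Ra\) is not integral over \(I\). If it were, then every element \(ra+b\) of the left ideal \(I+Ra\), with \(r\in R\) and \(b\in I\), would be integral over \(I\), because \(ra\in Ra\) is integral over \(I\) and Proposition~\ref{prp:a-int-implies-a+b} applies. Thus \(I+Ra\) would be integral over \(I\). But \(I+Ra\) properly contains \(I\), since \(a\notin I\). This contradicts integral closedness.

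\emph{Reverse direction.} Suppose \(I\) is not integrally closed. Then some left ideal \(J\supsetneq I\) is integral over \(I\). Pick \(a\in J\setminus I\). Since \(Ra\subseteq J\), every element of \(Ra\) is integral over \(I\), so \(Ra\) is integral over \(I\) with \(a\notin I\). This violates the stated condition.

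The only point needing care is in the forward direction. We must pass from "\(Ra\) integral over \(I\)" to "\(I+Ra\) integral over \(I\)". That is not automatic, since integral elements need not be closed under addition. It holds here only because the summand lies in \(I\) itself, which is exactly the case covered by Proposition~\ref{prp:a-int-implies-a+b}.
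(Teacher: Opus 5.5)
Your proof is correct and is exactly the argument the paper intends: the paper only remarks that the statement follows directly from Proposition~\ref{prp:a-int-implies-a+b}, and that is the step you use to pass from \(Ra\) integral over \(I\) to \(I+Ra\) integral over \(I\). The other direction is, as you say, just the fact that any subset of a left ideal integral over \(I\) is itself integral over \(I\).
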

	Using the proposition, we note that when \(R\) is commutative, an ideal \(I\) of \(R\) is integrally closed iff it is \emph{semiprime} (also called \emph{radical}), that is, \(a^n\in I\) implies \(a\in I\). 
	
	We now present some basic properties of integrally closed left ideals. The following proposition follows easily from Proposition~\ref{prp:int-quotient-ring}. 
	\begin{proposition}\label{prp:int-cls-quotient-ring} 
		Let \(J\) be a two-sided ideal of \(R\). A left ideal \(I\supseteq J\) is integrally closed in \(R\) iff \(I/J\) is integrally closed in \(R/J\). 
	\end{proposition}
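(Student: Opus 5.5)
The plan is to reduce the statement to Proposition~\ref{prp:int-quotient-ring} by relating left ideals of \(R/J\) to left ideals of \(R\) containing \(J\). Throughout we use the correspondence theorem: left ideals of \(R/J\) are exactly the sets \(K/J\) with \(K\) a left ideal of \(R\) containing \(J\), and \(K/J\) properly contains \(I/J\) iff \(K\) properly contains \(I\) (given \(K,I\supseteq J\)).

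For the forward direction, assume \(I\) is integrally closed in \(R\). Let \(L\) be a left ideal of \(R/J\) properly containing \(I/J\) and integral over \(I/J\). Write \(L=K/J\) with \(K\supsetneq I\). For each \(a\in K\), the coset \(a+J\in L\) is integral over \(I/J\). Hence \(a\) is integral over \(I\) by Proposition~\ref{prp:int-quotient-ring}, which applies since \(I\supseteq J\). So \(K\) is integral over \(I\), which contradicts the assumption that \(I\) is integrally closed.

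For the converse, assume \(I/J\) is integrally closed in \(R/J\). Let \(K\supsetneq I\) be a left ideal of \(R\) integral over \(I\). Then \(K\supseteq J\), so \(K/J\) is a left ideal of \(R/J\) properly containing \(I/J\). By Proposition~\ref{prp:int-quotient-ring}, every element \(a+J\) of \(K/J\) is integral over \(I/J\), again a contradiction.

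There is no real obstacle here. The only point needing care is that a left ideal \(K\) properly containing \(I\) automatically contains \(J\), since \(J\subseteq I\subseteq K\). This ensures that Proposition~\ref{prp:int-quotient-ring} and the correspondence theorem both apply. One could alternatively argue through Proposition~\ref{prp:1-char-int-closed} with principal left ideals \(Ra\), \(a\notin I\), but the direct argument above is simpler.
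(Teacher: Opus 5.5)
Your proof is correct and takes the same approach as the paper, which simply states that the result follows easily from Proposition~\ref{prp:int-quotient-ring}. Your argument fills in the routine details through the correspondence between left ideals of \(R/J\) and left ideals of \(R\) containing \(J\), including the observation that any \(K\supseteq I\) automatically contains \(J\).
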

	
	Next, we show that the intersection of any family of integrally closed left ideals is integrally closed. 
	\begin{proposition}\label{prp:intersection-int-closed}
		Let \(\{I_i\}\) be a family of integrally closed left ideals in \(R\). Then \(\cap_i I_i\) is integrally closed. 
	\end{proposition}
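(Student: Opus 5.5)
The plan is to argue via Proposition~\ref{prp:1-char-int-closed}. Set \(I=\cap_i I_i\) and take \(a\in R\setminus I\); we must show that \(Ra\) is not integral over \(I\). Since \(a\notin I\), there is an index \(j\) with \(a\notin I_j\). Because \(I_j\) is integrally closed, Proposition~\ref{prp:1-char-int-closed} shows that \(Ra\) is not integral over \(I_j\). Hence some element \(ra\in Ra\) is not integral over \(I_j\).

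The key step is a monotonicity observation: if \(I\subseteq J\) are left ideals and \(b\) is integral over \(I\), then \(b\) is integral over \(J\). This is immediate from Proposition~\ref{prp:def-int}. An equation \(b^n+r_{n-1}b^{n-1}+\cdots+r_0=0\) with coefficients \(r_k\in I\) is also an equation with coefficients in \(J\). Equivalently, we could use the second characterization in Proposition~\ref{prp:int-char}, since \(Ib^\infty\subseteq Jb^\infty\).

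Applying this with \(J=I_j\) and \(b=ra\): if \(ra\) were integral over \(I\), it would be integral over \(I_j\), which contradicts the choice of \(ra\). So \(ra\) is not integral over \(I\), and therefore \(Ra\) is not integral over \(I\). Since \(a\in R\setminus I\) was arbitrary, Proposition~\ref{prp:1-char-int-closed} shows that \(I\) is integrally closed.

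I do not expect any real obstacle. The only point needing care is that "\(Ra\) is integral over \(I\)" means every element of \(Ra\) is integral over \(I\). So failure of integrality for the single element \(ra\) over \(I_j\) transfers down to the smaller ideal \(I\) by contraposition of the monotonicity observation. Alternatively, the argument can be run directly from the definition with an arbitrary left ideal \(L\supsetneq I\): pick \(a\in L\setminus I\) and proceed as above.
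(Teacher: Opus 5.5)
Your proof is correct and is the paper's argument: choose \(j\) with \(a\notin I_j\), apply Proposition~\ref{prp:1-char-int-closed} to \(I_j\), and pass the non-integrality of \(Ra\) down to the smaller ideal \(\cap_i I_i\). Your monotonicity justification via Proposition~\ref{prp:def-int} spells out a step the paper leaves implicit.
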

	\begin{proof}
		We apply Proposition~\ref{prp:1-char-int-closed}. Let \(a\in R\setminus \cap_iI_i\). Then \(a\notin I_j\) for some index \(j\). By Proposition~\ref{prp:1-char-int-closed}, \(Ra\) is not integral over \(I_j\), hence not integral over the subset \(\cap_i I_i\) of \(I_j\). 
	\end{proof}
	The following proposition gives a class of integrally closed left ideals. 
	\begin{proposition}\label{prp:max-int-closed}
		For any ring \(R\), every maximal left ideal \(M\) of \(R\) is integrally closed in \(R\).
	\end{proposition}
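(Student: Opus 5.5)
By Proposition~\ref{prp:1-char-int-closed}, it suffices to show that for every \(a\in R\setminus M\), the left ideal \(Ra\) is not integral over \(M\). Equivalently, we exhibit one element of \(Ra\) that fails to be integral over \(M\). The criterion to use is Proposition~\ref{prp:int-max-left}: an element \(b\) is integral over \(M\) iff \((M:b)\neq M\). So our goal is an element \(b\in Ra\) with \((M:b)=M\).

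Fix \(a\notin M\). By maximality, \(M+Ra=R\), so there exist \(m\in M\) and \(r\in R\) with \(1=m+ra\). Take \(b=ra\in Ra\); then \(b=1-m\). We claim \((M:b)=M\).

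\begin{itemize}
\item For \(x\in M\): since \(M\) is a left ideal, \(xm\in M\), so \(xb=x-xm\in M\). Hence \(M\subseteq(M:b)\).
\item For \(x\in(M:b)\): we have \(xb\in M\) and \(xm\in M\), so \(x=xb+xm\in M\). Hence \((M:b)\subseteq M\).
\end{itemize}

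Therefore \((M:b)=M\). By Proposition~\ref{prp:int-max-left}, \(b\) is not integral over \(M\), so \(Ra\) is not integral over \(M\).

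The only real step is choosing the right element of \(Ra\). Testing \(a\) itself does not work, since \(a\) might still be integral over \(M\). The remedy is to pick the element \(1-m\) congruent to \(1\) modulo \(M\), which behaves like a unit relative to \(M\). The verification afterwards is routine, and Proposition~\ref{prp:int-max-left} handles everything else.
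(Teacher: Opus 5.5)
Your proof is correct, but it takes a longer road than the paper. The paper argues directly from the definition. The only left ideal properly containing \(M\) is \(R\). Moreover, \(R\) is not integral over \(M\), because \(1\) is not integral over any proper left ideal: an equation \(1+r_{n-1}+\cdots+r_0=0\) with \(r_i\in M\) would put \(1\) in \(M\).

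You instead use Proposition~\ref{prp:1-char-int-closed} to pass to the principal left ideals \(Ra\) with \(a\notin M\). These need not contain \(M\) and may be proper. You then verify \((M:1-m)=M\) and apply Proposition~\ref{prp:int-max-left}. Every step is valid, and there is no circularity, since both cited propositions are proved independently of this one.

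What your route buys is an explicit non-integral witness inside each \(Ra\). The same core fact drives both proofs, though. Your element \(b=1-m\) differs from \(1\) by an element of \(M\), so Proposition~\ref{prp:a-int-implies-a+b} already shows that \(b\) is not integral over \(M\), without computing \((M:b)\) or invoking Proposition~\ref{prp:int-max-left}. The paper's argument needs neither Proposition~\ref{prp:1-char-int-closed} nor Proposition~\ref{prp:int-max-left}, and is the more economical of the two.
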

	\begin{proof} 
		The only left ideal properly containing \(M\) is \(R\). Since \(1\) is not integral over any proper left ideal, \(R\) is not integral over \(M\). Therefore, \(M\) is integrally closed. 
	\end{proof}
	
	The following result shows that the property of being integrally closed is preserved under ideal quotients.
	\begin{proposition}\label{prp:I-int-closed-(I:a)}
		Let \(I\) be an integrally closed left ideal in \(R\). For any \(a\in R\), the left ideal \((I:a)\) is integrally closed in \(R\).
	\end{proposition}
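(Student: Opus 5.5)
The plan is to verify the criterion of Proposition~\ref{prp:1-char-int-closed} for \((I:a)\) by contradiction. Fix \(b\in R\setminus (I:a)\), so \(ba\notin I\), and suppose \(Rb\) is integral over \(J\colonequals (I:a)\). I will aim to show that \(R(ba)\) is integral over \(I\). Since \(ba\notin I\), this contradicts Proposition~\ref{prp:1-char-int-closed} applied to the integrally closed ideal \(I\). The facts I intend to use are \(Ja\subseteq I\) and the closure of \(Rb\) under right multiplication by elements of the form \(ar\): indeed \(v(ar)b\in Rb\) for \(v\in R\).

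An element of \(R(ba)\) has the form \(c=sa\) with \(s\in Rb\). I will use characterization (6) of Proposition~\ref{prp:int-char}. Let \(L\supseteq I\) be a \(c\)-saturated left ideal; the goal is \(L=R\). The natural candidate is \(K\colonequals (L:a)\). It contains \(J\), because \(I\subseteq L\). I want to show that \(K\) is saturated with respect to a suitable element of \(Rb\). Then integrality of that element over \(J\subseteq K\) forces \(K=R\), hence \(a\in L\). From this I then want to deduce \(L=R\), using \(c\)-saturation together with \(c=sa\): whenever \(xsa\in L\) we have \(x\in L\).

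Concretely, if \(xs\in K\), then \(xsa=xc\in L\), so \(x\in L\) by \(c\)-saturation. What is needed, however, is \(x\in K\), that is \(xa\in L\). This mismatch between "\(x\in L\)" and "\(xa\in L\)" is the main obstacle. My first attempt will be to replace \(s\) by the element \(s'=s\,a\,s=c\,s\in Rb\). The hope is that \(c\)-saturation applied twice converts the condition \(x s' \in K\) into \(xa\in L\). If that fails, I will fall back on Proposition~\ref{prp:int-ab-ba}: \(c=sa\) is integral over \(I\) iff \(as\) is integral over \(Ia\). I would then try to transport integrality of \(s\) over \(J\) to integrality of \(as\) over \(Ia\supseteq Ja\), using the explicit equation \(s^n=\sum_{i<n} j_i s^i\) with \(j_ia\in I\), multiplied on the left by \(a\) and on the right by \(a\), and regrouped into powers of \(c\) via \(c^{k}=s(as)^{k-1}a\).

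Once integrality of each \(c\in R(ba)\) over \(I\) is established, the contradiction with Proposition~\ref{prp:1-char-int-closed} for \(I\) completes the proof.
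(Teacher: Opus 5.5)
Your overall reduction is correct and matches the paper: show that every \(c=sa\) with \(s\in Rb\) is integral over \(I\), then contradict Proposition~\ref{prp:1-char-int-closed}. Neither of your two routes actually reaches that conclusion, though. The reason is the same in both: you try to exploit integrality of \(s\) (or of \(sas\)) rather than of \(as\).

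\textbf{First route.} The condition \(x(sas)\in K\) means \(xc^2\in L\). Applying \(c\)-saturation twice then gives only \(x\in L\). To get \(xa\in L\) you would need \(L\) to be \(a\)-closed, which you do not have.

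\textbf{Second route.} You misquote Proposition~\ref{prp:int-ab-ba}. The instance relevant here is ``\(as\) is integral over a left ideal \(X\) iff \(sa\) is integral over \(Xa\)''; in your version the ideal \(Ia\) sits on the wrong side. The inclusion \(Ja\subseteq Ia\) is also false in general. Take \(I=Ra\) with \(a^2=0\ne a\): then \(J=R\), \(Ja=Ra\), but \(Ia=0\).

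More fundamentally, no manipulation of the equation \(s^n=\sum_{i<n}j_is^i\) can succeed, because integrality of \(s\) alone yields nothing. In \(M_2(k)\) over a field \(k\), take \(I=0\), \(a=E_{11}\) and \(s=E_{11}+E_{21}\). Then \(J=(0:a)\) contains \(E_{12}+E_{22}\), and \(s^2-(E_{12}+E_{22})s=0\), so \(s\) is integral over \(J\). Yet \(as=E_{11}\) and \(sa=s\) are nonzero idempotents, so neither is integral over \(0=I=Ia\).

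\textbf{The missing idea.} The element \(as=(ar)b\) itself lies in \(Rb\); you even recorded this closure property. So \(as\) is integral over \(J\) by hypothesis. Proposition~\ref{prp:int-ab-ba}, applied correctly, then gives that \(sa\) is integral over \(Ja\subseteq I\), hence over \(I\). This is exactly the paper's one-line proof.

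Your saturation route also works once you use \(as\) in place of \(sas\):
\begin{itemize}
\item If \(x(as)\in K\), then \((xa)c=xasa\in L\), so \(xa\in L\). Thus \(K\) is \(as\)-saturated.
\item Since \(K\supseteq J\) and \(as\) is integral over \(J\), Proposition~\ref{prp:int-char}(6) forces \(K=R\), i.e.\ \(a\in L\).
\item Then \(Rc\subseteq Ra\subseteq L\), and \(c\)-saturation gives \(L=R\).
\end{itemize}
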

	\begin{proof} 
		We must show that if \(Rb\) is integral over \((I:a)\), then \(b\in (I:a)\). Let \(Rb\) be integral over \((I:a)\). Then for every \(r\in R\), \(a(rb)\in Rb\) is integral over \((I:a)\). It follows from Proposition~\ref{prp:int-ab-ba} that \((rb)a\) is integral over \((I:a)a\subseteq I\).  Hence \(Rba\) is integral over \(I\), which implies that \(ba\in I\). Therefore \(b\in (I:a)\), which completes the proof. 
	\end{proof}
	
	\begin{example}
		There exist rings for which every left ideal is integrally closed. Examples include matrix rings over division rings (see Example~\ref{exm:matrix-int}) and, more generally, rings in which every left ideal is an intersection of maximal left ideals; such rings are known in the literature as left \emph{V-rings}.
	\end{example}
	
	\subsection{The integral radical of a left ideal}
	The notion of an integrally closed left ideal leads to the following concept. 
	\begin{definition}
		Let \(I\) be a left ideal of a ring \(R\). The \emph{integral radical} of \(I\), denoted by \(\irad{I}\), is the intersection of all integrally closed left ideals of \(R\) that contain \(I\). The integral radical of the zero ideal, denoted by \(Int(R)\), is called the  \emph{integral radical} of \(R\). 
	\end{definition}
	\begin{remark}
		Strictly speaking, \(\Int(R)\) should be called the
		\emph{left integral radical} of \(R\). However, it will be shown that
		the left and right integral radicals coincide (see Proposition
		\ref{prp:left-rad-right-rad}). Therefore, no ambiguity arises from
		referring to \(\Int(R)\) simply as the \emph{integral
			radical} of \(R\).
	\end{remark}
	In the commutative case, the integral radical of an ideal \(I\) coincides
	with the usual \emph{radical} of \(I\), which is defined as the
	intersection of all radical ideals containing \(I\).
	
	We now collect some basic properties of integral radicals. The following
	proposition is an immediate consequence of
	Proposition~\ref{prp:int-cls-quotient-ring}.
	\begin{proposition}\label{prp:int-rad-quotient-ring}
		Let \(J\) be a two-sided ideal of \(R\). Then, for every left ideal
		\(I\) of \(R\) containing \(J\),
		\[
		\irad{\,I/J\,}=\frac{\irad{I}}{J}.
		\]
	\end{proposition}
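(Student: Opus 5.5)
The plan is to set up the correspondence between left ideals of \(R\) containing \(J\) and left ideals of \(R/J\), namely \(K\mapsto K/J\), and to show that it matches the two families whose intersections define the two radicals. We write \(\pi\colon R\to R/J\) for the quotient map. This correspondence is an inclusion-preserving bijection, and it commutes with arbitrary intersections: if \(\{K_i\}\) is a family of left ideals each containing \(J\), then \(\bigcap_i (K_i/J) = (\bigcap_i K_i)/J\). This holds because \(\pi^{-1}(\bigcap_i K_i/J)=\bigcap_i \pi^{-1}(K_i/J)=\bigcap_i K_i\), using that each \(K_i\) is saturated with respect to \(J\).

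Next we identify the indexing families. By definition, \(\irad{I/J}\) is the intersection of all integrally closed left ideals \(L\) of \(R/J\) with \(L\supseteq I/J\). Each such \(L\) has the form \(K/J\) for a unique left ideal \(K\) of \(R\) with \(K\supseteq J\). The condition \(K/J\supseteq I/J\) is equivalent to \(K\supseteq I\), since \(I\supseteq J\). By Proposition~\ref{prp:int-cls-quotient-ring}, \(K/J\) is integrally closed in \(R/J\) if and only if \(K\) is integrally closed in \(R\). The one point to check is that every integrally closed left ideal \(K\) of \(R\) containing \(I\) automatically contains \(J\); this follows from \(J\subseteq I\subseteq K\). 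So the map \(K\mapsto K/J\) is a bijection from the set of integrally closed left ideals of \(R\) containing \(I\) onto the set of integrally closed left ideals of \(R/J\) containing \(I/J\).

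Combining the two steps, we get
\[
\irad{I/J}=\bigcap_K (K/J)=\Bigl(\bigcap_K K\Bigr)/J=\irad{I}/J,
\]
where \(K\) ranges over the integrally closed left ideals of \(R\) containing \(I\). The family is never empty, because \(R\) itself is integrally closed: no left ideal properly contains it. So the intersection is well defined.

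The argument has no real obstacle. The only points needing care are the bookkeeping that every ideal in the family contains \(J\), which is what makes the correspondence apply, and the compatibility of the correspondence with infinite intersections.
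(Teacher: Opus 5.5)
Your proposal is correct and follows the same route as the paper, which states this result as an immediate consequence of Proposition~\ref{prp:int-cls-quotient-ring} via the correspondence \(K\mapsto K/J\). You spell out the omitted details: compatibility of this correspondence with arbitrary intersections, and the observation that \(J\subseteq I\subseteq K\) makes the two indexing families correspond bijectively.
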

	
	By Proposition~\ref{prp:intersection-int-closed}, the intersection of any
	family of integrally closed left ideals is integrally closed. Hence, the
	integral radical \(\irad{I}\) of a left ideal \(I\) is itself
	integrally closed. It follows that \(\irad{I}\) is the smallest
	integrally closed left ideal containing \(I\). As an immediate consequence, we obtain the following proposition.
	
	\begin{proposition}\label{prp:int-in-intrad}
		Let \(I\) and \(J\) be left ideals of \(R\). If \(J\) is integral over
		\(I\), then
		\[
		J \subseteq \irad{I}.
		\]
	\end{proposition}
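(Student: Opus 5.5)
The claim follows almost directly from the fact that \(\irad{I}\) is the smallest integrally closed left ideal containing \(I\), together with a monotonicity remark about integrality.

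Set \(K=\irad{I}\). By Proposition~\ref{prp:intersection-int-closed}, \(K\) is integrally closed, and \(I\subseteq K\) by definition. Integrality over a left ideal is monotone in the base. Indeed, by Proposition~\ref{prp:def-int}, if \(a\) satisfies
\[
a^n+r_{n-1}a^{n-1}+\cdots+r_0=0
\]
with \(r_i\in I\), then the same equation has coefficients in \(K\). Hence every element of \(J\) is integral over \(K\), and so \(J\) is integral over \(K\).

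Now consider the left ideal \(K+J\). I will show it is integral over \(K\). Take \(a=b+c\) with \(b\in K\) and \(c\in J\). Since \(c\) is integral over \(K\) and \(b\in K\), Proposition~\ref{prp:a-int-implies-a+b} shows that \(a\) is integral over \(K\). So \(K+J\) is a left ideal containing \(K\) and integral over \(K\).

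Since \(K\) is integrally closed, no left ideal properly containing \(K\) can be integral over \(K\). Therefore \(K+J=K\), which gives \(J\subseteq K=\irad{I}\).

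No step is a real obstacle. The only point needing care is that the definition of "integrally closed" refers to left ideals properly containing \(K\), not to arbitrary ones. That is why the argument passes from \(J\) to \(K+J\), which is justified by Proposition~\ref{prp:a-int-implies-a+b}.
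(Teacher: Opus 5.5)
Your proof is correct and follows the paper's reasoning. The paper treats the statement as an immediate consequence of \(\irad{I}\) being the smallest integrally closed left ideal containing \(I\). Your monotonicity remark and your passage to \(K+J\) via Proposition~\ref{prp:a-int-implies-a+b} spell out the step the paper leaves implicit, which is the same content as Proposition~\ref{prp:1-char-int-closed}.
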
 
	
	We next show that \(\Int(R)\) is in fact a two-sided ideal.
	Consequently, the distinction between left and right integral radicals
	disappears.
	
	\begin{proposition}\label{prp:left-rad-right-rad}
		For any ring \(R\), \(\Int(R)\) is the smallest
		integrally closed two-sided ideal of \(R\). Moreover, \(\Int(R)\) coincides with the intersection of all the integrally closed right ideals of \(R\).
	\end{proposition}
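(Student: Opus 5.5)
The plan has three parts: show that \(\Int(R)\) is two-sided, use this to identify it as the smallest integrally closed two-sided ideal, and then run the same argument on the right-sided side.

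\textbf{Two-sidedness.} Write \(N=\Int(R)\). Since \(N\) is a left ideal, it suffices to show \(Na\subseteq N\) for every \(a\in R\). By Proposition~\ref{prp:I-int-closed-(I:a)}, for each integrally closed left ideal \(I\), the colon \((I:a)\) is again integrally closed. It contains \(0\), so it contains \(N\) by minimality. Hence \(Na\subseteq I\) for every integrally closed left ideal \(I\), and intersecting over all such \(I\) gives \(Na\subseteq N\).

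\textbf{Smallest integrally closed two-sided ideal.} \(N\) is integrally closed by Proposition~\ref{prp:intersection-int-closed}. Any integrally closed two-sided ideal is in particular an integrally closed left ideal containing \(0\), so it contains \(N\). This gives the first claim.

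\textbf{Comparison with the right-sided radical.} Let \(N'\) be the intersection of all integrally closed right ideals. The right-handed analogues of the cited propositions hold by symmetry: intersections, colons \((I:a)'=\{r: ar\in I\}\), and Proposition~\ref{prp:int-ab-ba} with sides swapped. The same argument therefore shows \(N'\) is the smallest right-integrally closed two-sided ideal. It remains to show \(N=N'\). By the remark after the definition of integrally closed, a two-sided ideal is left integrally closed iff it is right integrally closed. Granting that, \(N\) is a right-integrally closed two-sided ideal, so \(N'\subseteq N\), and symmetrically \(N\subseteq N'\).

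\textbf{Main obstacle.} The step needing care is justifying this remark. For a two-sided ideal \(J\), integrality over \(J\) coincides with nilness over \(J\); this holds on either side, since every element of \(J\) is in \(J\). Left integrally closed then means: \(Ra\) nil over \(J\) forces \(a\in J\). In \(R/J\) this is the condition that nil left ideals vanish. Right integrally closed is the same with nil right ideals. These are equivalent because if \(Ra\) is nil, then \(aR\) is nil: for \((ar)^{n+1}=a(ra)^n r\), and \(ra\in Ra\) is nilpotent. The converse is symmetric. With this established, the proof is complete.
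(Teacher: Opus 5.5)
Your proposal is correct and follows the paper's proof. Two-sidedness comes from Proposition~\ref{prp:I-int-closed-(I:a)} applied to each integrally closed left ideal, minimality from Proposition~\ref{prp:intersection-int-closed}, and the right-sided statement from symmetry; your explicit check that a two-sided ideal is left integrally closed iff it is right integrally closed (via $(ar)^{n+1}=a(ra)^nr$) simply spells out what the paper compresses into ``left-right symmetry'' together with its earlier remark.
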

	\begin{proof}
		Let \(\Sigma\) be the set of all integrally closed left ideals in \(R\). By definition, 
		\[
		\Int(R) = \bigcap_{I\in\Sigma} I.
		\]
		By Proposition~\ref{prp:I-int-closed-(I:a)}, for every \(a\in R\) and \(I\in \Sigma\), we have
		\((I:a)\in \Sigma\). It follows that 
		\[
		(\Int(R):a) = ( \bigcap_{I\in\Sigma} I:a) = \bigcap_{I\in\Sigma}(I:a) \supseteq  \bigcap_{I\in\Sigma} I = \Int(R).
		\]
		This means that \(\Int(R)a\subseteq \Int(R)\), and therefore, \(\Int(R)\) is a two-sided ideal. Since every integrally closed two-sided ideal is a member of \(\Sigma\), we conclude that \(\Int(R)\) is the smallest integrally closed two-sided ideal in \(R\). The second statement follows from left-right symmetry. 
	\end{proof}
	We also record the following result describing the behavior of integral radicals under quotient rings. 
	\begin{proposition}\label{prp:int-rad-quotient-ring-rad}
		Let \(J\) be a two-sided ideal of \(R\). Then
		\[
		\frac{\Int(R)+J}{J}\subseteq \Int(R/J) = \frac{\irad{J}}{J}
		\]
		If \(J\) is contained in \(\Int(R)\), then
		\[
		\Int(R/J) = \frac{\Int(R)}{J}.
		\]
		In particular, \(\Int(R/\Int(R))=(0)\). 
	\end{proposition}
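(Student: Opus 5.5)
The plan has three steps: first prove the equality \(\Int(R/J)=\irad{J}/J\), then the inclusion, and finally the statement for \(J\subseteq \Int(R)\).

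\textbf{The equality.} By definition, \(\Int(R/J)=\irad{(0)}\) computed in \(R/J\), where \((0)=J/J\). Applying Proposition~\ref{prp:int-rad-quotient-ring} with \(I=J\) gives
\[
\irad{J/J}=\irad{J}/J ,
\]
and this is the equality on the right.

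\textbf{The inclusion.} Since \(J\subseteq \irad{J}\) and \(\irad{J}\) is an integrally closed left ideal of \(R\), we have \(\Int(R)\subseteq \irad{J}\), because \(\Int(R)\) is the intersection of all integrally closed left ideals. Hence \(\Int(R)+J\subseteq \irad{J}\). Passing to the quotient gives \((\Int(R)+J)/J\subseteq \irad{J}/J\). Note that \(\Int(R)+J\) is a two-sided ideal by Proposition~\ref{prp:left-rad-right-rad}, so the left-hand side makes sense as an ideal of \(R/J\).

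\textbf{The case \(J\subseteq \Int(R)\).} Here I will show \(\irad{J}=\Int(R)\).
\begin{itemize}
\item Since \(J\subseteq \Int(R)\), the inclusion above already gives \(\Int(R)\subseteq\irad{J}\).
\item For the reverse, recall from Proposition~\ref{prp:left-rad-right-rad} that \(\Int(R)\) is itself integrally closed. It contains \(J\), so the minimality of \(\irad{J}\) as the smallest integrally closed left ideal containing \(J\) gives \(\irad{J}\subseteq \Int(R)\).
\end{itemize}
Thus \(\irad{J}=\Int(R)\), and so \(\Int(R/J)=\Int(R)/J\).

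\textbf{The final claim.} Take \(J=\Int(R)\), which is a two-sided ideal by Proposition~\ref{prp:left-rad-right-rad}. The previous step then gives \(\Int(R/\Int(R))=\Int(R)/\Int(R)=(0)\).

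There is no real obstacle. The only point needing care is to check that Proposition~\ref{prp:int-rad-quotient-ring} applies with \(I=J\). It does, since that proposition only requires \(I\supseteq J\), and its content rests on the correspondence of integrally closed left ideals in Proposition~\ref{prp:int-cls-quotient-ring}.
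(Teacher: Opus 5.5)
Your proposal is correct and follows the paper's argument. The equality comes from Proposition~\ref{prp:int-rad-quotient-ring} with \(I=J\), and the inclusion from \(\Int(R)\subseteq\irad{J}\); for the case \(J\subseteq\Int(R)\), which the paper leaves to the reader, your argument that \(\irad{J}=\Int(R)\), because \(\Int(R)\) is an integrally closed left ideal containing \(J\), is exactly the intended completion.
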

	\begin{proof}
		Using Proposition~\ref{prp:int-rad-quotient-ring}, we write
		\[
		\Int(R/J) = \irad{\,J/J\,}=\frac{\irad{J}}{J}.
		\]
		Since \(\irad{J}\) contains both \(J\) and \(\Int(R)\), the inclusion
		\[
		\frac{\Int(R)+J}{J}\subseteq \Int(R/J) 
		\]
		follows. The easy proof of the second statement is left to the reader. 
	\end{proof}
	\begin{remark}
		The integral radical of a ring coincides with the classical \emph{lower strong radical determined by the nil radical}, which is defined as follows: Let \(\mathcal S\) denote the class of all (unital or nonunital) rings that contain no nonzero nil left ideal.
		Then, for a not necessarily unital ring \(A\), the lower strong radical of \(A\) is defined as 
		\[
		\mathcal N_s(A) \colonequals \bigcap\{I\triangleleft A:A/I\in\mathcal S\}.
		\]
		A consequence of Proposition~\ref{prp:int-rad-quotient-ring} is that
		\[
		\mathcal N_s(R) =\Int(R)
		\]
		for all unital rings \(R\). For a detailed discussion of $\mathcal N_s$, see \cite{beidar2002radicals}. 
	\end{remark}
	\subsection{Fully integrally closed left ideals}
	In this part, we introduce and study the notion of fully integrally closed one-sided ideals. 
	\begin{definition}
		A left ideal \(I\) of a ring \(R\) is said to be \emph{fully integrally closed (in \(R\))} if no left ideal of \(R\) properly containing \(I\) is fully integral over \(I\).
	\end{definition}
	It is clear that every integrally closed left ideal is also fully integrally closed. The following proposition follows directly from Proposition~\ref{prp:sum-fully-int}. 
	\begin{proposition}\label{prp:1-char-full-int-closed}
		A left ideal \(I\) is fully integrally closed in \(R\) iff  \(Ra\) is not fully integral over \(I\) for every \(a\in R\setminus I\).  
	\end{proposition}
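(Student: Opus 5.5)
We have to prove that a left ideal \(I\) is fully integrally closed if and only if \(Ra\) fails to be fully integral over \(I\) for every \(a\in R\setminus I\). The plan mirrors the argument behind Proposition~\ref{prp:1-char-int-closed}, with Proposition~\ref{prp:sum-fully-int} playing the role that Proposition~\ref{prp:a-int-implies-a+b} played there.

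\emph{Forward direction.} Suppose \(I\) is fully integrally closed, and suppose for contradiction that \(Ra\) is fully integral over \(I\) for some \(a\notin I\). Then Proposition~\ref{prp:sum-fully-int}, applied with \(J=Ra\), shows that \(I+Ra\) is fully integral over \(I\). Since \(a\notin I\), the left ideal \(I+Ra\) properly contains \(I\). This contradicts the assumption that \(I\) is fully integrally closed.

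\emph{Converse direction.} Suppose \(Ra\) is not fully integral over \(I\) for every \(a\notin I\). Let \(L\supsetneq I\) be a left ideal, and suppose for contradiction that \(L\) is fully integral over \(I\). Choose \(a\in L\setminus I\). We need the elementary fact that full integrality passes to subsets \(A'\subseteq A\). Indeed, in the criterion of Proposition~\ref{prp:def-fully-int}, the quantifier ranges over \(n\)-tuples from \(A\), so restricting it to tuples from \(A'\) preserves the membership with the same \(n\). Since \(Ra\subseteq L\), it follows that \(Ra\) is fully integral over \(I\), which is a contradiction. Hence no left ideal properly containing \(I\) is fully integral over \(I\), and \(I\) is fully integrally closed.

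The argument is essentially formal, so there is no serious obstacle. The only points needing care are the monotonicity in \(A\) just described and the fact that \(I+Ra\) is indeed a left ideal strictly larger than \(I\) (it contains \(a\notin I\)). All of the nontrivial work was already done in Proposition~\ref{prp:sum-fully-int}.
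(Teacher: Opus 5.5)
Your proof is correct and follows the paper's route: the paper derives this statement directly from Proposition~\ref{prp:sum-fully-int}, which gives the forward direction via \(I+Ra\). The converse rests, as in your argument, on the fact that full integrality passes from \(L\) to the subset \(Ra\subseteq L\).
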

	Using this proposition, we prove the following result.
	\begin{proposition}\label{prp:full-implies-semiprime}
		Every fully integrally closed left ideal \(I\) of \(R\) is semiprime, that is, \(aRa\subseteq I\) implies \(a\in I\). The converse holds if \(I\) is two-sided. 
	\end{proposition}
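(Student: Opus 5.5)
Suppose \(I\) is fully integrally closed and \(aRa\subseteq I\). By Proposition~\ref{prp:1-char-full-int-closed}, it suffices to show that \(Ra\) is fully integral over \(I\). Once this is done, \(a\notin I\) would contradict the proposition, so \(a\in I\).

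To show that \(Ra\) is fully integral over \(I\), I will use Proposition~\ref{prp:def-fully-int} with \(n=2\). Take \(a_1=r_1a\) and \(a_2=r_2a\) in \(Ra\). Then
\[
a_1a_2=r_1(ar_2a)\in RaRa\subseteq RI .
\]
This is not yet enough, because \(I\) is only a left ideal, so \(RI\) need not lie in \(I\). The key point is to use the left-ideal property in a better order: \(ar_2a\in aRa\subseteq I\), and since \(I\) is a left ideal, \(r_1(ar_2a)\in I\). Thus \(a_1a_2\in I\), which is the \(k=0\) term of the sum in Proposition~\ref{prp:def-fully-int}. Hence \(Ra\) is fully integral over \(I\), and indeed nilpotent over \(I\) of index \(2\). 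This settles the first statement.

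For the converse, let \(I\) be a two-sided semiprime ideal. By Proposition~\ref{prp:1-char-full-int-closed}, I need to show that if \(Ra\) is fully integral over \(I\), then \(a\in I\). Because \(I\) is two-sided, \(Ia_{i_1}\cdots a_{i_k}\subseteq I\). So full integrality says exactly that \((Ra)^n\subseteq I\) for some \(n\), i.e.\ \(Ra\) is nilpotent over \(I\), as remarked after Proposition~\ref{prp:def-fully-int}.

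It remains to check that a left ideal \(L=Ra\) with \(L^n\subseteq I\) must lie in \(I\) when \(I\) is a semiprime two-sided ideal. I will argue by induction on \(n\), the case \(n=1\) being trivial. Suppose \(L^n\subseteq I\) with \(n\ge2\), and take \(x\in L^{n-1}\), where \(L^{n-1}\) denotes the set of products of \(n-1\) elements of \(L\). Each such product lies in \(L\), since \(L\) is a left ideal, so \(Rx\subseteq L\). Then
\[
xRx\subseteq L^{n-1}L=L^n\subseteq I .
\]
Semiprimeness gives \(x\in I\), so \(L^{n-1}\subseteq I\), and induction yields \(L\subseteq I\); in particular \(a\in I\). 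The only step needing care is checking that \(xRx\subseteq L^n\) uses the products correctly: \(Rx\subseteq L\) because \(L\) is a left ideal, and a product of \(n-1\) elements of \(L\) followed by one more element of \(L\) is an element of \(L^n\).
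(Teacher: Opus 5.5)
Your proof is correct and follows the same route as the paper: for the first part, \(aRa\subseteq I\) gives \((Ra)^2\subseteq I\), so \(Ra\) is fully integral over \(I\). For the converse, full integrality over a two-sided ideal reduces to nilpotence, and a nilpotent left ideal over a semiprime ideal must lie inside it. The paper asserts this last step without proof; your induction on the nilpotency index correctly supplies it.
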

	\begin{proof}
		The first statement is immediate because if \(aRa\subseteq I\), then \(Ra\) is fully integral over \(I\).  As for the second statement, we just note that if \(Ra\) is fully integral over a two-sided ideal \(I\), then \(Ra\) is nilpotent over \(I\), and therefore, \(Ra\subseteq I\) provided that \(I\) is a semiprime two-sided ideal. 
	\end{proof}
	In analogy to integrally closed left ideals, we have the following propositions. The proofs are similar to those for integrally closed left ideals and are therefore omitted. 
	\begin{proposition}\label{prp:full-int-cls-quotient-ring} 
		Let \(J\) be a two-sided ideal of \(R\). A left ideal \(I\supseteq J\) is fully integrally closed in \(R\) iff \(I/J\) is fully integrally closed in \(R/J\). 
	\end{proposition}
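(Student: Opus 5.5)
The plan is to reduce the statement to a claim about full integrality under passage to $R/J$. Concretely, it suffices to show the following: for a left ideal $K\supseteq J$ and a subset $A\subseteq R$, $A$ is fully integral over $K$ if and only if $\bar A=(A+J)/J$ is fully integral over $K/J$. This is the full-integrality counterpart of Proposition~\ref{prp:int-quotient-ring}. Once it is available, the proposition follows by matching left ideals containing $I$ with left ideals of $R/J$ containing $I/J$ under the usual correspondence $L\mapsto L/J$. This correspondence preserves proper containment. By the claim, $L$ is fully integral over $I$ exactly when $L/J$ is fully integral over $I/J$. So a witness against $I$ being fully integrally closed yields one against $I/J$, and conversely. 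Here we use that every left ideal of $R/J$ has the form $L/J$ with $L\supseteq J$.

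To prove the claim, I will use the characterization in Proposition~\ref{prp:def-fully-int}. Suppose $A$ is fully integral over $K$ with exponent $n$. Take $a_1,\dots,a_n\in A$ and reduce the membership
\[
a_1\cdots a_n\in \sum_{k=0}^{n-1}\sum_{i_1,\dots,i_k} K a_{i_1}\cdots a_{i_k}
\]
modulo $J$. This gives the same membership for $\bar a_1\cdots\bar a_n$ with coefficients in $K/J$, since reduction modulo $J$ is a ring homomorphism.

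Conversely, suppose $\bar A$ is fully integral over $K/J$ with exponent $n$. Lifting the membership for $\bar a_1\cdots\bar a_n$ gives
\[
a_1\cdots a_n\in \sum_{k<n}\sum K a_{i_1}\cdots a_{i_k} + J.
\]
Since $J\subseteq K$, the term $J$ is absorbed into the $k=0$ summand $K$. This gives the required membership in $R$.

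The only step needing care is the definitional point in the last paragraph. Proposition~\ref{prp:def-fully-int} is phrased with ``if''; it must be read as the working characterization, as the paper declares just after it. One must also confirm that the elements of $\bar A$ are exactly the images of elements of $A$, which holds by definition, so the exponent $n$ transfers unchanged. I expect no real obstacle: the argument is the same as for Proposition~\ref{prp:int-cls-quotient-ring}, with Proposition~\ref{prp:int-quotient-ring} replaced by the claim above.
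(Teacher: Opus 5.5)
Your proof is correct and follows essentially the same route as the paper. The paper omits the argument, saying it is analogous to Proposition~\ref{prp:int-cls-quotient-ring}: full integrality over \(I\supseteq J\) is detected in \(R/J\), which is the quotient claim you verify via Proposition~\ref{prp:def-fully-int} using \(J\subseteq K\), and this is then combined with the correspondence between left ideals containing \(J\) and left ideals of \(R/J\).
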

	\begin{proposition}\label{prp:intersection-full-int-closed}
		Let \(\{I_i\}\) be a family of fully integrally closed left ideals in \(R\). Then \(\cap_i I_i\) is fully integrally closed. 
	\end{proposition}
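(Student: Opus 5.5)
We mirror the proof of Proposition~\ref{prp:intersection-int-closed} and use the characterization in Proposition~\ref{prp:1-char-full-int-closed}. Set \(I=\cap_i I_i\); it is a left ideal since each \(I_i\) is. By Proposition~\ref{prp:1-char-full-int-closed}, it suffices to show that for every \(a\in R\setminus I\), the left ideal \(Ra\) is not fully integral over \(I\).

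Fix \(a\in R\setminus I\). Then \(a\notin I_j\) for some index \(j\). Since \(I_j\) is fully integrally closed, Proposition~\ref{prp:1-char-full-int-closed} shows that \(Ra\) is not fully integral over \(I_j\).

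It remains to check that full integrality is monotone in the base left ideal. By Proposition~\ref{prp:def-fully-int}, full integrality of \(Ra\) over a left ideal \(K\) means there is \(n\ge 1\) such that for all \(a_1,\dots,a_n\in Ra\),
\[
a_1\cdots a_n\in \sum_{k=0}^{n-1}\,\,\sum_{1\le i_1,\ldots,i_k\le n} Ka_{i_1}\cdots a_{i_k}.
\]
The right-hand side grows with \(K\), so if \(I\subseteq I_j\), full integrality over \(I\) gives the same membership with \(K=I_j\). Hence, if \(Ra\) were fully integral over \(I\), it would be fully integral over \(I_j\), which is a contradiction.

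Therefore \(Ra\) is not fully integral over \(I\), and \(I\) is fully integrally closed. The only point that needs checking is this monotonicity, and it is immediate from the explicit form in Proposition~\ref{prp:def-fully-int} (or directly from the original definition, since any expression with factors from \(I\) is also one with factors from \(I_j\)). There is no real obstacle here.
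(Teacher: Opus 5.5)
Your proof is correct and follows the route the paper intends. The paper omits this proof as analogous to that of Proposition~\ref{prp:intersection-int-closed}, and your argument is exactly that analogue: you pass through Proposition~\ref{prp:1-char-full-int-closed} and use the fact that full integrality is monotone in the base left ideal.
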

	
	The following result shows that the property of being fully integrally closed is preserved under ideal quotients. 
	\begin{proposition}\label{prp:I-full-int-closed-(I:a)}
		Let \(I\) be a fully integrally closed left ideal in \(R\). For any \(a\in R\), the left ideal \((I:a)\) is fully integrally closed in \(R\).
	\end{proposition}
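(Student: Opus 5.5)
The plan is to follow the proof of Proposition~\ref{prp:I-int-closed-(I:a)}, replacing the use of Proposition~\ref{prp:int-ab-ba} by a direct "shift" computation adapted to full integrality. By Proposition~\ref{prp:1-char-full-int-closed}, it suffices to show: if \(Rb\) is fully integral over \((I:a)\), then \(b\in (I:a)\), i.e.\ \(ba\in I\). Applying Proposition~\ref{prp:1-char-full-int-closed} again, now to \(I\), this reduces to showing that \(Rba\) is fully integral over \(I\). We will obtain this with exponent \(n+1\), where \(n\) is the exponent witnessing full integrality of \(Rb\) over \((I:a)\) in the sense of Proposition~\ref{prp:def-fully-int}.

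Take arbitrary \(x_1,\dots,x_{n+1}\in Rba\) and write \(x_j=r_jba\) with \(r_j\in R\). Set \(c_j\colonequals ar_jb\in Rb\) for \(j=2,\dots,n+1\). Regrouping the product gives
\[
x_1x_2\cdots x_{n+1}=r_1b\,(c_2c_3\cdots c_{n+1})\,a .
\]
Since \(c_2,\dots,c_{n+1}\) are \(n\) elements of \(Rb\), full integrality gives
\[
c_2\cdots c_{n+1}\in \sum_{k=0}^{n-1}\sum (I:a)\,c_{i_1}\cdots c_{i_k}
\]
for indices \(i_1,\dots,i_k\in\{2,\dots,n+1\}\).

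Next we left-multiply by \(r_1b\) and right-multiply by \(a\). Because \((I:a)\) is a left ideal, \(r_1b(I:a)\subseteq (I:a)\). Each resulting term therefore lies in
\[
(I:a)\,ar_{i_1}b\,ar_{i_2}b\cdots ar_{i_k}b\,a=\bigl((I:a)a\bigr)\,x_{i_1}x_{i_2}\cdots x_{i_k}.
\]
Using \((I:a)a\subseteq I\), this is contained in \(I\,x_{i_1}\cdots x_{i_k}\) with \(k\le n-1<n+1\). The case \(k=0\) gives \((I:a)a\subseteq I\) directly. Hence
\[
x_1\cdots x_{n+1}\in\sum_{k=0}^{n}\sum_{1\le i_1,\ldots,i_k\le n+1} I\,x_{i_1}\cdots x_{i_k},
\]
so \(Rba\) is fully integral over \(I\). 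Since \(I\) is fully integrally closed, Proposition~\ref{prp:1-char-full-int-closed} gives \(ba\in I\), that is, \(b\in(I:a)\), and the proposition follows.

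The main obstacle is the bookkeeping in the regrouping step. One must check that the leftover factor \(a\) on the left of each term can be absorbed into the coefficient \((I:a)\), producing \((I:a)a\subseteq I\), rather than landing after an element of \(I\). A naive grouping would leave products like \(I\,r\), which do not lie in \(I\) because \(I\) is only a left ideal. The grouping \(x_{i}=r_iba\) with \(c_i=ar_ib\) is chosen precisely so that every \(a\) sits immediately to the right of either a coefficient from \((I:a)\) or a \(b\), and this is what makes the argument work.
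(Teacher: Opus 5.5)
Your proof is correct and is essentially the paper's argument. The paper also applies full integrality of \(Rb\) over \((I:a)\) to the \(n\) elements \(ar_ib\), multiplies on the left by \(r_0b\) and on the right by \(a\), and uses \(r_0b(I:a)a\subseteq I\). This shows that \(Rba\) is fully integral over \(I\) with exponent \(n+1\), so \(ba\in I\).
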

	\begin{proof} 
		We must show that if \(Rb\) is fully integral over \((I:a)\), then \(b\in (I:a)\). Let \(Rb\) be fully integral over \((I:a)\). There exists \(n\ge 1\) such that for all \(r_1,\dots,r_n\in R\), 
		\[
		(ar_1b)\cdots (ar_nb)\in \sum_{k=0}^{n-1}\sum_{1\le i_1,\dots, i_k\le n} (I:a)(ar_{i_1}b)\cdots (ar_{i_k}b).
		\]
		Given an arbitrary \(r_0\in R\),  we multiply this relation on the left by \(r_0b\) and on the right by \(a\), yielding
		\[
		r_0b(ar_1b)\cdots (ar_nb)a\in \sum_{k=0}^{n-1}\sum_{1\le i_1,\dots, i_k\le n} r_0b(I:a)(ar_{i_1}b)\cdots (ar_{i_k}b)a.
		\]
		Rewriting this relation, we obtain
		\[
		(r_0ba)(r_1ba)\cdots (r_nba)\in \sum_{k=0}^{n-1}\sum_{1\le i_1,\dots, i_k\le n} (r_0b(I:a)a)(r_{i_1}ba)\cdots (r_{i_k}ba).
		\]
		Since \((r_0b(I:a)a)\subseteq I\), 
		\[
		(r_0ba)(r_1ba)\cdots (r_nba)\in \sum_{k=0}^{n-1}\sum_{1\le i_1,\dots, i_k\le n} I(r_{i_1}ba)\cdots (r_{i_k}ba)
		\]
		for all \(r_0,\dots,r_n\in R\). This means that \(Rba\) is fully integral over \(I\). Since \(I\) is fully integrally closed, it follows that \(ba\in I\), hence \(b\in (I:a)\). This completes the proof. 
	\end{proof}
	
	Using the notion of full integrality, we introduce the following concept. 
	\begin{definition}
		Let \(I\) be a left ideal of a ring \(R\). The \emph{full integral radical} of \(I\), denoted by \(\frad{I}\), is the intersection of all fully integrally closed left ideals of \(R\) that contain \(I\).  
	\end{definition}
	We now collect some facts about \(\frad{I}\) in the following proposition. The proofs are straightforward and are therefore omitted. 
	\begin{proposition}\label{prp:full-int-rad-properties}
		\begin{enumerate}
			\item For every left ideal \(I\), \(\frad{I}\)  is the smallest
			fully integrally closed left ideal containing \(I\). 
			\item For every left ideal \(I\), \(\frad{I}\subseteq \irad{I}\).
			\item Let \(I\) and \(J\) be left ideals of \(R\). If \(J\) is fully integral over
			\(I\), then
			\[
			J \subseteq \frad{I}.
			\]
		\end{enumerate}
	\end{proposition}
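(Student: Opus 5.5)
The three parts are proved in order, using only Proposition~\ref{prp:intersection-full-int-closed} and the definitions.

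For Part~(1), I set \(\Sigma_I\) to be the family of all fully integrally closed left ideals containing \(I\). This family is nonempty, since \(R\) itself belongs to it: no left ideal properly contains \(R\), so \(R\) is vacuously fully integrally closed. By Proposition~\ref{prp:intersection-full-int-closed}, the intersection \(\frad{I}=\bigcap\Sigma_I\) is fully integrally closed. It clearly contains \(I\). Any fully integrally closed left ideal containing \(I\) lies in \(\Sigma_I\) and hence contains \(\frad{I}\). So \(\frad{I}\) is the smallest such ideal, which is Part~(1).

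For Part~(2), I use the remark after the definition of full integral closedness: every integrally closed left ideal is fully integrally closed. The reason is that full integrality implies integrality. If a left ideal \(J\supsetneq I\) were fully integral over \(I\), it would be integral over \(I\), contradicting integral closedness. Consequently the family of integrally closed left ideals containing \(I\) is a subfamily of \(\Sigma_I\). Intersecting over the larger family gives a smaller set, so \(\frad{I}\subseteq \irad{I}\). Alternatively, \(\irad{I}\) is fully integrally closed and contains \(I\), so Part~(1) applies directly.

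For Part~(3), suppose \(J\) is fully integral over \(I\), and let \(K\in\Sigma_I\). I must show \(J\subseteq K\).

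1. First I would check that \(J\) is fully integral over \(K\). Since \(I\subseteq K\), the relation from Proposition~\ref{prp:def-fully-int}, namely \(a_1\cdots a_n\in\sum I a_{i_1}\cdots a_{i_k}\), remains true with \(I\) replaced by \(K\). This is monotonicity in the base, the same argument as in Proposition~\ref{prp:intersection-int-closed}.
2. By Proposition~\ref{prp:sum-fully-int}, \(K+J\) is then fully integral over \(K\).
3. Since \(K\) is fully integrally closed, \(K+J\) cannot properly contain \(K\). Hence \(K+J=K\), that is, \(J\subseteq K\).
4. Intersecting over all \(K\in\Sigma_I\) gives \(J\subseteq\frad{I}\).

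The only step needing care is step~2, the passage from \(J\) to \(K+J\). This is exactly where Proposition~\ref{prp:sum-fully-int} is needed, because \(J\) need not contain \(K\). Everything else is formal.
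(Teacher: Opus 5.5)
Your proof is correct and follows the argument the paper has in mind; the paper omits the proof as straightforward and models it on the integral-radical case. Part~(1) comes from Proposition~\ref{prp:intersection-full-int-closed}, Part~(2) from the fact that integrally closed left ideals are fully integrally closed, and Part~(3) comes via Proposition~\ref{prp:sum-fully-int}, which is exactly what underlies the characterization in Proposition~\ref{prp:1-char-full-int-closed} that this kind of argument relies on. Your check that \(R\in\Sigma_I\), so that the intersection is over a nonempty family, makes explicit a point the paper leaves implicit.
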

	It turns out that the full integral radical of the zero ideal coincides with the \emph{lower nilradical} \(\Nil_*(R)\), which is defined as the intersection of all the semiprime two-sided ideals in \(R\).  
	
	\begin{theorem}\label{thm:full-rad-zero-lower}
		For any ring \(R\), the lower nilradical \(\Nil_*(R)\) coincides with the intersection of all the fully integrally closed left ideals of \(R\).
	\end{theorem}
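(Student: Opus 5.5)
Let \(F\) denote the intersection of all fully integrally closed left ideals of \(R\), that is, \(F=\frad{(0)}\). I will prove \(\Nil_*(R)\subseteq F\) and \(F\subseteq \Nil_*(R)\) separately.

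For \(F\subseteq \Nil_*(R)\): by Proposition~\ref{prp:full-implies-semiprime}, every semiprime two-sided ideal is fully integrally closed, since the converse part of that proposition applies to two-sided ideals. Hence every semiprime two-sided ideal appears among the left ideals being intersected to form \(F\), and so \(F\) is contained in their intersection \(\Nil_*(R)\).

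For \(\Nil_*(R)\subseteq F\), I will imitate the proof of Proposition~\ref{prp:left-rad-right-rad}. Let \(\Sigma\) be the set of fully integrally closed left ideals. By Proposition~\ref{prp:I-full-int-closed-(I:a)}, \((I:a)\in\Sigma\) whenever \(I\in\Sigma\). Therefore
\[
(F:a)=\bigcap_{I\in\Sigma}(I:a)\supseteq F,
\]
so \(F a\subseteq F\) and \(F\) is a two-sided ideal. By Proposition~\ref{prp:intersection-full-int-closed}, \(F\) is fully integrally closed, and by Proposition~\ref{prp:full-implies-semiprime} it is therefore semiprime. A semiprime two-sided ideal contains \(\Nil_*(R)\), which gives \(\Nil_*(R)\subseteq F\).

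The only step needing care is the two-sidedness of \(F\). I have to confirm that Proposition~\ref{prp:I-full-int-closed-(I:a)} applies to every member of \(\Sigma\), and that the identity \(\bigl(\bigcap_i I_i:a\bigr)=\bigcap_i(I_i:a)\) from Proposition~\ref{prp:colon-basic-properties} is valid for arbitrary families. Both hold as stated, so I expect no real obstacle: the theorem reduces to combining the earlier propositions in this way.
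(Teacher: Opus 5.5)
Your proof is correct and follows essentially the same route as the paper: you show that \(\frad{(0)}\) is a two-sided ideal using Proposition~\ref{prp:I-full-int-closed-(I:a)} and the colon-ideal argument of Proposition~\ref{prp:left-rad-right-rad}, and then invoke Proposition~\ref{prp:full-implies-semiprime}, which says that full integral closedness and semiprimeness coincide for two-sided ideals. Your two separate inclusions are simply an unpacking of the paper's observation that \(\frad{(0)}\) is the smallest fully integrally closed two-sided ideal and \(\Nil_*(R)\) is the smallest semiprime two-sided ideal.
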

	\begin{proof}
		Using Proposition~\ref{prp:I-full-int-closed-(I:a)} and an argument similar to the one used in the proof of Proposition~\ref{prp:left-rad-right-rad}, we see that \(\frad{0}\) is a two-sided ideal. In particular, \(\frad{0}\) is the smallest fully integrally closed two-sided ideal in \(R\). It is easy to see that \(\Nil_*(R)\) is the smallest semiprime two-sided ideal in \(R\). Since full integral closedness is equivalent to semiprimeness for two-sided ideals by Proposition~\ref{prp:full-implies-semiprime}, the result follows. 
	\end{proof}

	\section{Generalizations of classical results on the Jacobson radical}\label{sec:generalizations}
	Having defined the notions of integral dependence over one-sided ideals and the integral radical of one-sided ideals, we use the results of the previous sections to generalize some well-known results concerning Jacobson radicals to Jacobson radicals of left ideals. 
	
	\subsection{The Jacobson radical of a left ideal}
	The \emph{Jacobson radical} of a left ideal \(I\), denoted by \(\jrad{I}\), is defined as the intersection of all maximal left ideals containing \(I\). The Jacobson radical of the zero ideal of a ring \(R\), denoted by \(Jac(R)\), is called the \emph{Jacobson radical} of \(R\). In analogy with the usual characterization of elements in the Jacobson radical of a ring, we have the following characterization of the Jacobson radical of a left ideal (see \cite[Example 4.1]{jain1995superfluous}).
	\begin{proposition}\label{prp:char-jac-rad}
		Let \(I\) be a left ideal in a ring \(R\). An element \(a\in R\) belongs to \(\jrad{I}\) iff \(I+R(1-ra) = R\) for all \(r\in R\).
	\end{proposition}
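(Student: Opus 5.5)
We prove the two implications separately. In both directions the only tools needed are the maximality of maximal left ideals and Zorn's lemma, in the form that every proper left ideal is contained in a maximal left ideal.

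\emph{Forward implication.} Suppose \(a\in\jrad{I}\) and fix \(r\in R\). Assume, for contradiction, that \(I+R(1-ra)\neq R\). Since \(R\) is unital, Zorn's lemma gives a maximal left ideal \(M\) with \(I+R(1-ra)\subseteq M\). Because \(M\supseteq I\), the definition of \(\jrad{I}\) gives \(a\in M\). Since \(M\) is a left ideal, \(ra\in M\). We also have \(1-ra\in M\), so \(1=(1-ra)+ra\in M\), which is a contradiction. Hence \(I+R(1-ra)=R\).

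\emph{Converse.} Suppose \(I+R(1-ra)=R\) for all \(r\in R\), and assume, for contradiction, that \(a\notin\jrad{I}\). Then there is a maximal left ideal \(M\supseteq I\) with \(a\notin M\). The left ideal \(M+Ra\) strictly contains \(M\), so maximality gives \(M+Ra=R\). Write \(1=m+ra\) with \(m\in M\) and \(r\in R\). Then \(1-ra=m\in M\), so
\[
I+R(1-ra)\subseteq M\neq R,
\]
which contradicts the hypothesis for this particular \(r\). Hence \(a\in\jrad{I}\).

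The main point needing care is choosing the right element \(r\) in the converse. It comes from the decomposition \(1=m+ra\) produced by maximality, and the rest is bookkeeping.
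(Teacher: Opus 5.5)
Your proof is correct and complete. The paper gives no proof of its own and simply cites Jain et al.\ (Example~4.1); your argument, including the choice of \(r\) from \(1=m+ra\) in \(M+Ra=R\), is the standard one-sided adaptation of the classical characterization of elements of \(Jac(R)\), and it supplies exactly what that citation covers.
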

	The following result is immediate from the definition. \begin{proposition}\label{prp:between-I-I+rad} 
		Let \(I\) be a left ideal of a ring \(R\). Then, for any left ideal \(J\) such that \(I\subseteq J\subseteq I + Jac(R)\), we have \[ I + Jac(R) \subseteq \jrad{J}=\jrad{I}. \] \end{proposition}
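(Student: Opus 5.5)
The statement has two parts: the inclusion \(I+Jac(R)\subseteq \jrad{J}\), and the equality \(\jrad{J}=\jrad{I}\). I would argue directly from the definition of \(\jrad{\cdot}\) as an intersection of maximal left ideals. The only external fact needed is that \(Jac(R)\) lies in every maximal left ideal of \(R\). This holds by definition, since \(Jac(R)=\jrad{0}\) is the intersection of all maximal left ideals.

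First I show that a maximal left ideal \(M\) contains \(I\) if and only if it contains \(J\).

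\begin{itemize}
\item If \(M\supseteq J\), then \(M\supseteq I\), because \(I\subseteq J\).
\item If \(M\supseteq I\), then \(M\) also contains \(Jac(R)\). Hence \(M\supseteq I+Jac(R)\supseteq J\).
\end{itemize}

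So the family of maximal left ideals containing \(I\) is the same as the family containing \(J\). Their intersections therefore coincide, which gives \(\jrad{J}=\jrad{I}\).

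For the inclusion, every maximal left ideal containing \(I\) contains \(I+Jac(R)\). Intersecting over all such \(M\) gives \(I+Jac(R)\subseteq \jrad{I}\). One degenerate case needs a word: if no maximal left ideal contains \(I\), then \(I=R\) (every proper left ideal lies in a maximal one by Zorn's lemma). The empty intersection is then read as \(R\), and everything holds trivially.

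There is no real obstacle here. The only point needing care is remembering that \(Jac(R)\subseteq M\) for every maximal left ideal \(M\), which is exactly the definition of \(Jac(R)\). Alternatively, one could use Proposition~\ref{prp:char-jac-rad}, but the intersection argument is more direct.
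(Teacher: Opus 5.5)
Your proof is correct and is the argument the paper intends when it calls the result "immediate from the definition": every maximal left ideal contains \(Jac(R)\), so the maximal left ideals containing \(I\) are exactly those containing \(J\), and each of them contains \(I+Jac(R)\). Your remark on the degenerate case \(I=R\) is harmless; under the empty-intersection convention the statement holds trivially there.
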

	The following propositions describe the relationship between integral dependence and the Jacobson radical of a left ideal.
	\begin{proposition}\label{prp:int-jac}
		Let \(I,J\) be two left ideals in a ring \(R\). If \(J\) is integral over \(I\), then \(J\subseteq \jrad{I}\). 
	\end{proposition}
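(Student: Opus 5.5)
The quickest route goes through the integral radical. By Proposition~\ref{prp:int-in-intrad}, if \(J\) is integral over \(I\), then \(J\subseteq \irad{I}\). It therefore suffices to show \(\irad{I}\subseteq \jrad{I}\).

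By definition, \(\irad{I}\) is the intersection of all integrally closed left ideals containing \(I\), while \(\jrad{I}\) is the intersection of all maximal left ideals containing \(I\). Proposition~\ref{prp:max-int-closed} says every maximal left ideal is integrally closed. So every maximal left ideal containing \(I\) occurs among the ideals intersected to form \(\irad{I}\). An intersection over a larger family is smaller, hence \(\irad{I}\subseteq \jrad{I}\), and the claim follows.

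As a sanity check, one can also argue directly. Fix a maximal left ideal \(M\supseteq I\) and \(a\in J\). Integrality of \(a\) over \(I\) passes to \(M\), since the defining equation with coefficients in \(I\) has coefficients in \(M\). Moreover, \(Ra\subseteq J\) is integral over \(M\) as well. By Proposition~\ref{prp:1-char-int-closed} applied to the integrally closed ideal \(M\), this forces \(a\in M\).

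There is no real obstacle. The only point needing care is that integrality over \(I\) transfers to integrality over any larger left ideal, which is immediate from Proposition~\ref{prp:def-int}, or from \(\irad{I}\) being defined as an intersection.
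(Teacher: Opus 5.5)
Your proof is correct, but it takes a different route from the paper's. The paper argues element-wise. If \(Ra\) is integral over \(I\), then each \(ra\) is integral over \(I\), so characterization~(4) of Proposition~\ref{prp:int-char} gives \(I[t]+R[t](1-rat)=R[t]\). Specializing the central variable at \(t=1\) yields \(I+R(1-ra)=R\) for every \(r\in R\), and Proposition~\ref{prp:char-jac-rad} then gives \(a\in\jrad{I}\).

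You instead factor the statement as \(J\subseteq\irad{I}\subseteq\jrad{I}\). The second inclusion is exactly the paper's next result, Proposition~\ref{prp:intrad-in-jac}, which the paper proves by the same observation that maximal left ideals are integrally closed. In your arrangement, the present proposition becomes a corollary of Propositions~\ref{prp:int-in-intrad} and~\ref{prp:intrad-in-jac}. There is no circularity, since neither of those uses it.

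What each route buys:
\begin{itemize}
\item Yours is more economical. It gives the sharper intermediate inclusion \(J\subseteq\irad{I}\) without the polynomial characterization or the externally cited description of \(\jrad{I}\).
\item The paper's makes the mechanism explicit. Integrality of \(ra\) over \(I\) produces \(s\in R\) with \(s(1-ra)\in 1+I\), a one-sided analogue of the classical fact that nilpotent elements are quasi-regular. The later arguments for algebraic algebras and graded rings rely on this same characterization of \(\jrad{I}\).
\end{itemize}

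Your ``sanity check'' is your main argument unwound at a single maximal left ideal \(M\supseteq I\), and it is also fine.
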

	\begin{proof}
		Let \(Ra\) be integral over \(I\). By Proposition~\ref{prp:int-char}, we have 
		\[
		I[t] + R[t] (1-rat) = R[t]
		\]
		for all \(r\in R\). 
		We set \(t=1\) and obtain \(I+R(1-ra)=R\). It follows from  Proposition~\ref{prp:char-jac-rad} that \(a\in \jrad{I}\), completing the proof. 
	\end{proof}
	\begin{proposition}\label{prp:intrad-in-jac}
		For every left ideal \(I\), we have \(\irad{I}\subseteq \jrad{I}\). In particular, \(Int(R)\subseteq Jac(R)\) for any ring \(R\).
	\end{proposition}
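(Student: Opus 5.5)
The plan is to show that \(\jrad{I}\) is itself an integrally closed left ideal containing \(I\). Since \(\irad{I}\) is the intersection of all such left ideals, the inclusion \(\irad{I}\subseteq \jrad{I}\) then follows immediately. The case \(I=0\) gives \(\Int(R)\subseteq Jac(R)\), because \(\irad{0}=\Int(R)\) and \(\jrad{0}=Jac(R)\) by definition.

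The first step is that every maximal left ideal \(M\) is integrally closed, by Proposition~\ref{prp:max-int-closed}. Next, \(\jrad{I}\) is by definition the intersection of the maximal left ideals containing \(I\). By Proposition~\ref{prp:intersection-int-closed}, an intersection of integrally closed left ideals is integrally closed, so \(\jrad{I}\) is integrally closed and contains \(I\).

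One degenerate case needs a word. If no maximal left ideal contains \(I\), then \(I=R\), since \(R\) is unital and every proper left ideal lies in a maximal one by Zorn's lemma. In that case \(\jrad{I}=R\), the empty intersection, and the inclusion \(\irad{I}\subseteq R\) is trivial.

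As an alternative route, one could use Proposition~\ref{prp:int-jac} together with the minimality of \(\irad{I}\). Passing that way needs some control over integrality relative to \(\irad{I}\) itself, however, so the intersection argument above is cleaner.

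There is essentially no obstacle here. The only point requiring care is the convention for the empty intersection when \(I=R\).
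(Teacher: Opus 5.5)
Your proof is correct and is essentially the paper's argument, which also rests on Proposition~\ref{prp:max-int-closed}. The paper just observes that each maximal left ideal containing \(I\) is an integrally closed left ideal containing \(I\), and hence contains \(\irad{I}\); your extra step through Proposition~\ref{prp:intersection-int-closed}, showing that \(\jrad{I}\) is itself integrally closed, is valid but not needed.
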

	\begin{proof}
		This follows immediately from the fact that every maximal left ideal is integrally closed (see Proposition~\ref{prp:max-int-closed}).  
	\end{proof}
	\subsection{Jacobson radicals of left ideals in left artinian rings}
	In this part, we generalize the classical result that the Jacobson radical of a left artinian ring is nilpotent \cite[Theorem 4.12]{lam2001}. We begin with the following lemma. 
	\begin{lemma}
		For any left ideal \(I\) in a left artinian ring \(R\), we have \\ \(\jrad{I} =~ I +Jac(R)\). 
	\end{lemma}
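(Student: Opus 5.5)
The plan is to reduce to the semisimple quotient \(\bar R = R/Jac(R)\) and use that every left ideal of a semisimple ring is an intersection of maximal left ideals. One inclusion is already available: Proposition~\ref{prp:between-I-I+rad} (with \(J=I\)) gives \(I+Jac(R)\subseteq \jrad{I}\). So only the reverse inclusion \(\jrad{I}\subseteq I+Jac(R)\) needs proof.

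Set \(J = Jac(R)\) and \(\bar R = R/J\). Since \(R\) is left artinian, \(\bar R\) is semisimple artinian \cite{lam2001}. Maximal left ideals of \(R\) all contain \(J\), because \(J\) is the intersection of all maximal left ideals. Hence they correspond bijectively to maximal left ideals of \(\bar R\), and a maximal left ideal \(M\) contains \(I\) iff \(M/J\) contains \((I+J)/J\). Therefore
\[
\frac{\jrad{I}}{J} = \jrad{\,(I+J)/J\,},
\]
where the radical on the right is computed in \(\bar R\). It thus suffices to show that in a semisimple ring every left ideal \(L\) equals its Jacobson radical, i.e.\ is an intersection of maximal left ideals.

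For this last step, write \(\bar R = L\oplus L'\) using semisimplicity, and decompose \(L'\) into a direct sum of finitely many simple left modules \(S_1\oplus\cdots\oplus S_k\). Set
\[
M_j = L\oplus\bigoplus_{i\neq j} S_i .
\]
Each \(M_j\) is a maximal left ideal, since \(\bar R/M_j\cong S_j\) is simple. Moreover \(\bigcap_j M_j = L\). If \(L=\bar R\) the statement is trivial. Pulling this back to \(R\) gives \(\jrad{I}\subseteq I+J\), which completes the argument.

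I expect no real obstacle here. The only point needing care is the correspondence of maximal left ideals under the quotient map, together with the standard facts that \(R/Jac(R)\) is semisimple for left artinian \(R\) and that semisimple rings are left V-rings. Both are classical and can be cited.
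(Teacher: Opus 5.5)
Your proof is correct and follows the same route as the paper. Both arguments pass to \(R/Jac(R)\) and use that every maximal left ideal contains \(Jac(R)\); the paper packages this as \(\jrad{I}=\jrad{I+Jac(R)}\) via Proposition~\ref{prp:between-I-I+rad}. Both then conclude from the fact that the semisimple ring \(R/Jac(R)\) is a left V-ring. The one difference is that you prove the V-ring property explicitly via the decomposition \(\bar R = L\oplus S_1\oplus\cdots\oplus S_k\), where the paper simply cites it.
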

	\begin{proof}
		By Proposition~\ref{prp:between-I-I+rad}, we have
		\[
		\jrad{I} = \jrad{I +Jac(R)}.
		\]
		Since \(R/Jac(R)\) is semisimple, hence a V-ring, we see that 
		\[
		\frac{\jrad{I +Jac(R)}}{Jac(R)} = \jrad{\frac{I +Jac(R)}{Jac(R)}} = \frac{I +Jac(R)}{Jac(R)},
		\]
		from which the result follows. 
	\end{proof}
	We now prove the following result, which generalizes the aforementioned classical result to Jacobson radicals of left ideals in left artinian rings.
	\begin{theorem}\label{thm:rad-artinian}
		Let \(R\) be a left artinian ring. For any left ideal \(I\) in \(R\), the left ideal \(\jrad{I}\) is fully integral over \(I\). In particular, \(\frad{I} = \irad{I} = \jrad{I}\).  
	\end{theorem}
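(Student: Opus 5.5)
By the preceding lemma, \(\jrad{I} = I + Jac(R)\). Since \(R\) is left artinian, \(N \colonequals Jac(R)\) is nilpotent by the classical result \cite[Theorem 4.12]{lam2001}; fix \(m\ge 1\) with \(N^m=0\). The plan is to show directly that \(N\) is fully integral over \(I\) (indeed nilpotent over \(I\), even over \(0\)), and then invoke Proposition~\ref{prp:sum-fully-int} to conclude that \(I+N=\jrad{I}\) is fully integral over \(I\).

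For the first step, take \(n=m\) in Proposition~\ref{prp:def-fully-int}. For any \(a_1,\dots,a_m\in N\) we have \(a_1\cdots a_m\in N^m=0\), and \(0\) lies trivially in the sum \(\sum_{k=0}^{m-1}\sum_{i_1,\dots,i_k} I a_{i_1}\cdots a_{i_k}\), since each summand is a left ideal-type additive set containing \(0\). Hence \(N\) is fully integral over \(I\). Proposition~\ref{prp:sum-fully-int} then gives that \(I+N\) is fully integral over \(I\). This proves the first assertion.

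For the equalities, I will chain containments. Proposition~\ref{prp:full-int-rad-properties}(3) applied to \(J=\jrad{I}\) gives \(\jrad{I}\subseteq \frad{I}\). Part (2) of the same proposition gives \(\frad{I}\subseteq\irad{I}\), and Proposition~\ref{prp:intrad-in-jac} gives \(\irad{I}\subseteq\jrad{I}\). Together these yield \(\frad{I}=\irad{I}=\jrad{I}\).

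There is no real obstacle here. The only subtle point is the identification \(\jrad{I}=I+Jac(R)\), which is already supplied by the lemma, together with the nilpotence of \(Jac(R)\). Everything else is bookkeeping with the earlier propositions.
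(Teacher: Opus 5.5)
Your proposal is correct and follows essentially the same route as the paper: nilpotence of \(Jac(R)\) makes it fully integral over \(I\), Proposition~\ref{prp:sum-fully-int} transfers this to \(I+Jac(R)=\jrad{I}\), and the chain \(\jrad{I}\subseteq\frad{I}\subseteq\irad{I}\subseteq\jrad{I}\) gives the equalities. You also state explicitly the inclusion \(\frad{I}\subseteq\irad{I}\) from Proposition~\ref{prp:full-int-rad-properties}(2), which the paper's proof uses without citing it.
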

	\begin{proof}
		It is well-known that the Jacobson radical of a left artinian ring is nilpotent \cite[Theorem 4.12]{lam2001}. In particular, \(Jac(R)\) is fully integral over every left ideal in \(R\). It follows from Proposition~\ref{prp:sum-fully-int} that \(I+Jac(R)\) is fully integral over \(I\). By the lemma, \(\jrad{I} = I +Jac(R)\), which implies that \(\jrad{I}\) is fully integral over \(I\).   Since  \(\jrad{I}\) contains \(\irad{I}\) by Proposition~\ref{prp:intrad-in-jac}, and \(\frad{I}\) contains every left ideal fully integral over \(I\) by Proposition~\ref{prp:full-int-rad-properties}, we conclude that \(\frad{I} = \irad{I} = \jrad{I}\).
	\end{proof}
	\subsection{Jacobson radicals of left ideals in algebras}
	
	In this part, we generalize some classical results concerning Jacobson radicals of algebras over fields. Throughout this part, let \(k\) be a (commutative) field and \(R\) a \(k\)-algebra. 
	
	An element \(a\in R\) is said to be \emph{\(k\)-algebraic over a left ideal \(I\)} if \(f(a)\in Ia^\infty\) for some nonzero polynomial \(f(x)\in k[x]\). When \(I\) is the zero ideal, the usual term \emph{algebraic over \(k\)} is preferable. We have the following characterization of \(k\)-algebraicity over a left ideal \(I\) for elements of the Jacobson radical of \(I\). The proof is similar to that of the classical case \cite[Proposition 4.18]{lam2001}.  
	
	\begin{proposition}
		Let \(I\) be a left ideal in \(R\) and \(a\in \jrad{I}\). Then \(a\) is \(k\)-algebraic over \(I\) iff \(a\) is integral over \(I\).
	\end{proposition}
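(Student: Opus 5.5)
The reverse implication is immediate. If \(a\) is integral over \(I\), then by Proposition~\ref{prp:int-char} we have \(a^n\in Ia^\infty\) for some \(n\ge 1\). So the nonzero polynomial \(f(x)=x^n\in k[x]\) satisfies \(f(a)\in Ia^\infty\), and \(a\) is \(k\)-algebraic over \(I\). This direction does not even use the hypothesis \(a\in\jrad{I}\).

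For the forward implication, suppose \(f(a)\in Ia^\infty\) for some nonzero \(f\in k[x]\). Following the classical argument, factor out the largest power of \(x\) and write
\[
f(x)=x^m\bigl(c-x\,g(x)\bigr)
\]
with \(c\in k\), \(c\neq 0\), and \(g\in k[x]\). Multiplying \(f\) by the unit \(c^{-1}\in k\) does not affect membership in \(Ia^\infty\), since \(Ia^\infty\) is closed under left multiplication by scalars, these being central. So we may assume \(c=1\), and then
\[
a^m\bigl(1-g(a)a\bigr)=f(a)\in Ia^\infty .
\]
Here \(g(a)\) commutes with \(a\), so \(f(a)=(1-g(a)a)a^m\) as well.

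The key step is to exploit \(a\in\jrad{I}\) to cancel the factor \(u:=1-g(a)a\). By Proposition~\ref{prp:char-jac-rad}, taking \(r=g(a)\) gives \(I+Ru=R\), so there exist \(s\in R\) and \(x\in I\) with \(1=x+su\). Multiplying on the right by \(a^m\) gives
\[
a^m = x a^m + s\,u a^m .
\]
We have \(x a^m\in Ia^\infty\). Also \(ua^m=f(a)\in Ia^\infty\), and \(Ia^\infty\) is a left ideal, so \(s\,ua^m\in Ia^\infty\). Hence \(a^m\in Ia^\infty\).

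If \(m\ge1\), Proposition~\ref{prp:int-char}(2) shows that \(a\) is integral over \(I\). If \(m=0\), then \(1=x a^0+s\,u\in Ia^\infty\), so \(Ia^\infty=R\) and in particular \(a\in Ia^\infty\), which again gives integrality.

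The main obstacle is making sure the cancellation really works in the one-sided setting. Two points are needed. First, the order of the factors must be right: writing \(f(a)=u a^m\) is what lets us multiply the relation \(1=x+su\) on the right by \(a^m\). Second, \(Ia^\infty\) must be a left ideal, which holds because \(I\) is a left ideal. Both points are fine because \(u\) is a polynomial in \(a\) with central coefficients, so it commutes with \(a^m\). I expect no further difficulty.
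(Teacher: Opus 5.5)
Your proof is correct and is essentially the paper's argument. Both normalize \(f\) so that \(f(a)=u\,a^m\) with \(u=1-g(a)a\), use Proposition~\ref{prp:char-jac-rad} to obtain \(s\) with \(su\in 1+I\), and multiply \(f(a)\in Ia^\infty\) on the left by \(s\) to conclude \(a^m\in Ia^\infty\). You also treat the case \(m=0\) explicitly, noting that then \(Ia^\infty=R\) and so \(a\in Ia^\infty\); the paper leaves this detail implicit.
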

	
	\begin{proof}
		To prove the reverse direction, let \(a\in \jrad{I}\) be integral over \(I\). Then we have $a^{n}\in I a^{\infty}$ for some \(n\ge 1\). It follows that  \[ f(a)=a^{n}\in I a^{\infty} \] for the polynomial $f(x)=x^{n}\in k[x]$, that is, \(a\) is \(k\)-algebraic over \(I\). 
		
		Conversely, suppose that \( f(a)\in I a^{\infty} \) for some $0\neq f\in k[x]$. We may assume that \(f\) has the form \[ f(x)=x^{m}+a_{1}x^{m+1}+\cdots+a_{n}x^{m+n}, \] where $m\ge 0$. Then \[ f(a) = \bigl(1+a_{1}a+\cdots+a_{n}a^{n}\bigr)a^{m} \in I a^{\infty}. \] Since $a_{1}a+\cdots+a_{n}a^{n}\in \jrad{I}$, it follows from Proposition~\ref{prp:char-jac-rad} that there exists $r\in R$ such that \[ r\bigl(1+a_{1}a+\cdots+a_{n}a^{n}\bigr)=1+b \] for some $b\in I$. Multiplying the above membership by $r$ on the left, we obtain \[ (1+b)a^{m}\in I a^{\infty}. \] Since \(b\in I\), it follows that \(a^{m}\in I a^{\infty}, \) 
		which completes the proof. 
	\end{proof}
	Using this proposition, we prove two theorems that generalize classical results to left ideals. The first result generalizes the fact that the Jacobson radical of an algebraic algebra is the largest nil ideal \cite[Corollary 4.19]{lam2001}.
	\begin{corollary}
		Let \(I\) be a left ideal in \(R\). If every element of \(R\) is \(k\)-algebraic over \(I\), then  \(\jrad{I}\) is the largest left ideal that is integral over \(I\).    In particular, \(\irad{I} = \jrad{I}\). 
	\end{corollary}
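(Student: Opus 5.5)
Two inclusions will establish the corollary. First, every left ideal \(J\) that is integral over \(I\) satisfies \(J\subseteq \jrad{I}\) by Proposition~\ref{prp:int-jac}. Second, I will show that \(\jrad{I}\) is itself integral over \(I\). Since \(\jrad{I}\) is an intersection of left ideals, it is a left ideal, so it is then the largest left ideal integral over \(I\). By hypothesis, each \(a\in\jrad{I}\) is \(k\)-algebraic over \(I\). The preceding proposition then says that \(a\) is integral over \(I\). Hence \(\jrad{I}\) is integral over \(I\).

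For the equality \(\irad{I}=\jrad{I}\), the inclusion \(\irad{I}\subseteq \jrad{I}\) is Proposition~\ref{prp:intrad-in-jac}. For the reverse inclusion, \(\jrad{I}\) is a left ideal integral over \(I\), so Proposition~\ref{prp:int-in-intrad} gives \(\jrad{I}\subseteq \irad{I}\).

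The only point needing care is the use of the preceding proposition. Its hypothesis \(a\in\jrad{I}\) is satisfied exactly for the elements we apply it to. The algebraicity hypothesis is assumed for all of \(R\), so in particular it holds on \(\jrad{I}\). No real obstacle remains: the corollary is a direct assembly of earlier results.
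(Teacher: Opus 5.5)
Your proof is correct and follows essentially the paper's argument: the preceding proposition shows that \(\jrad{I}\) is integral over \(I\), and Propositions~\ref{prp:intrad-in-jac} and~\ref{prp:int-in-intrad} then give maximality and \(\irad{I}=\jrad{I}\). The only difference is that you get maximality directly from Proposition~\ref{prp:int-jac}, whereas the paper derives it through the inclusion of every integral left ideal in \(\irad{I}\subseteq\jrad{I}\); this changes nothing of substance.
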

	
	\begin{proof}
		By the above proposition, \(\jrad{I}\) is integral over \(I\). Since  \(\jrad{I}\) contains \(\irad{I}\) by Proposition~\ref{prp:intrad-in-jac} and \(\irad{I}\) contains every left ideal integral over \(I\) by Proposition~\ref{prp:int-in-intrad}, the corollary follows.
	\end{proof}
	The following theorem generalizes a result of Amitsur (see Amitsur's paper \cite{amitsur1956algebras} or Lam's book \cite[Theorem 4.20]{lam2001}).
	\begin{theorem}\label{thm:amitsur-thm}
		Suppose that \(\dim_k R < |k|\). Then for any left ideal \(I\) in \(R\), \(\jrad{I}\) is the largest left ideal that is integral over \(I\).    In particular, \(\irad{I} = \jrad{I}\). 
	\end{theorem}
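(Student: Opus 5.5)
Following the structure of the preceding corollary, it suffices to show that every element \(a\in\jrad{I}\) is \(k\)-algebraic over \(I\). The preceding proposition then makes \(a\) integral over \(I\), so \(\jrad{I}\) is integral over \(I\). On the other hand, Proposition~\ref{prp:int-jac} shows that every left ideal integral over \(I\) lies in \(\jrad{I}\). Together these make \(\jrad{I}\) the largest such left ideal. The equality \(\irad{I}=\jrad{I}\) then follows exactly as in the corollary, from Propositions~\ref{prp:intrad-in-jac} and~\ref{prp:int-in-intrad}.

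To prove algebraicity I adapt Amitsur's classical cardinality argument. Fix \(a\in\jrad{I}\). For each \(\lambda\in k^\times\), the element \(\lambda^{-1}a\) lies in \(\jrad{I}\). By Proposition~\ref{prp:char-jac-rad} applied with \(r=\lambda^{-1}\), we get \(I+R(1-\lambda^{-1}a)=R\), and hence \(I+R(\lambda-a)=R\). So there exist \(r_\lambda\in R\) and \(b_\lambda\in I\) with
\[
r_\lambda(\lambda-a)=1+b_\lambda .
\]
The family \(\{r_\lambda\}_{\lambda\in k^\times}\) has cardinality \(|k^\times|=|k|>\dim_k R\) when \(k\) is infinite. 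If \(k\) is finite, then \(R\) is finite dimensional, every element is algebraic, and the corollary already applies. Hence there is a nontrivial linear dependence \(\sum_{i=1}^m c_i r_{\lambda_i}=0\) with distinct \(\lambda_i\) and all \(c_i\neq 0\).

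The main obstacle is turning this dependence into a relation \(f(a)\in Ia^\infty\); the issue is that \(r_\lambda\) is only a left inverse of \(\lambda-a\) modulo \(I\). I handle it with the polynomial \(q(x)=\prod_{j}(\lambda_j-x)\in k[x]\) and \(q_i(x)=q(x)/(\lambda_i-x)\), all of which commute with \(a\) when evaluated. For each \(i\), right-multiplying \(r_{\lambda_i}(\lambda_i-a)=1+b_{\lambda_i}\) by \(q_i(a)\) gives
\[
r_{\lambda_i}q(a)=q_i(a)+b_{\lambda_i}q_i(a).
\]
Summing with coefficients \(c_i\) yields
\[
0=\sum_i c_i q_i(a)+\sum_i c_i b_{\lambda_i}q_i(a).
\]
The second sum lies in \(Ia^\infty\), because each \(b_{\lambda_i}\in I\) and \(q_i(a)\) is a \(k\)-polynomial in \(a\) (recall \(Ia^\infty=\{g(a)\mid g\in I[t]\}\), and \(kI\subseteq I\)). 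Therefore \(g(a)\in Ia^\infty\) with \(g=\sum_i c_iq_i\in k[x]\).

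It remains to check that \(g\neq 0\). Evaluating at \(x=\lambda_i\) gives \(g(\lambda_i)=c_i\prod_{j\ne i}(\lambda_j-\lambda_i)\neq0\). So \(a\) is \(k\)-algebraic over \(I\), and the preceding proposition completes the argument. The only points that need care are the finite-field case and the fact that \(q_i(a)\) commutes with \(a\), which makes the right multiplication legitimate.
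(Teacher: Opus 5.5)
Your proof is correct and follows the paper's argument essentially step for step. You use the same left inverses $r_\lambda$ of $\lambda-a$ modulo $I$, the same linear dependence forced by $\dim_k R<|k|$, the same right multiplication by $\prod_j(\lambda_j-a)$, and the same nonvanishing check of $\sum_i c_i\prod_{j\ne i}(\lambda_j-x)$ at the $\lambda_i$. The one difference is minor: for finite $k$ you invoke the preceding corollary (a finite-dimensional algebra is algebraic, so $f(a)=0\in Ia^\infty$), while the paper appeals to Theorem~\ref{thm:rad-artinian}, and both routes are valid.
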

	\begin{proof}
		If \(k\) is a finite field, then \(R\) is left artinian, and the result follows from Theorem~\ref{thm:rad-artinian}. Assume that $k$ is infinite. By the above corollary, it suffices to show that every element of \(\jrad{I}\) is $k$-algebraic over $I$. Let $r \in \jrad{I}$. For each nonzero $a \in k$,  there exists $s \in R$ such that \[ s(a-r)=1+e \] for some $e\in I$ by Proposition~\ref{prp:char-jac-rad}.  It follows from \(\dim_k |R| < |k|\) that we may choose pariwise distinct nonzero scalars $a_1,\dots,a_n\in k$ such that their corresponding elements $s_1,\dots,s_n\in R$ are linearly dependent over \(k\). Then there exist scalars $b_1,\dots,b_n\in k$, not all zero, such that \[ \sum_{i=1}^n b_i s_i=0. \] Multiplying this relation on the right by \[ \prod_{j=1}^n (a_j-r), \] we obtain \[ 0 = \sum_{i=1}^n b_i s_i \prod_{j=1}^n (a_j-r) = \sum_{i=1}^n b_i(1+e_i)\prod_{j\ne i}(a_j-r), \] where \(e_i\in I\). Define \[ f(x) = \sum_{i=1}^n b_i\prod_{j\ne i}(a_j-x) \in k[x]. \] Then it is easy to see that \( f(r)\in I r^{\infty}. \) Moreover, for each $i$, \[ f(a_i) = b_i\prod_{j\ne i}(a_j-a_i), \] and since the $a_i$ are distinct and some $b_i\neq 0$, the polynomial $f$ is nonzero. Consequently, $r$ is $k$-algebraic over $I$, completing the proof.  
	\end{proof}

	\subsection{Jacobson radicals of graded left ideals}
	
	It is well known that the Jacobson radical of a graded ring is graded. In addition, Amitsur's theorem provides a more complete description of the Jacobson radical of the polynomial ring \(R[x]\). In this part, we generalize these classical results to left ideals. We adapt the proofs presented in Passman's book \cite{passman2004course}. 
	
	We begin with a lemma. 
	\begin{lemma} Let \(S\) be a ring extension of \(R\) (sharing the same identity), and let \(I\) be a proper left ideal of \(R\). 
		\begin{enumerate} 
			\item If \(S=Rs_1+Rs_2+\cdots+Rs_q\), where each \(s_i\) commutes with all \(r\in R\), and \(SI \neq S\), then \[ \jrad{I}\subseteq R\cap \jrad{SI}. \]
			Here, \(SI\) denotes the left ideal of \(S\) that is generated by \(I\).
			\item If \(Sm \neq S\) for every maximal left ideal \(m\) of \(R\) containing \(I\), then \[ R\cap \jrad{SI}\subseteq \jrad{I}. \] 
		\end{enumerate} 
	\end{lemma}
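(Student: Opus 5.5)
Both parts rest on Proposition~\ref{prp:char-jac-rad}: \(a\in\jrad{I}\) iff \(I+R(1-ra)=R\) for all \(r\in R\). The same criterion applies in \(S\) to the left ideal \(SI\).

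\textbf{Part (1).} We follow the classical argument that \(Jac(R)\subseteq Jac(S)\) for finite centralizing extensions, which runs through Nakayama's lemma.

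Let \(a\in\jrad{I}\) and let \(M\) be a maximal left ideal of \(S\) containing \(SI\); such \(M\) exist because \(SI\neq S\). We must show \(a\in M\). Consider the simple left \(S\)-module \(V=S/M\). As a left \(R\)-module, \(V\) is generated by the images \(\bar s_1,\dots,\bar s_q\), since \(S=\sum Rs_i\); so \(V\) is a finitely generated nonzero \(R\)-module.

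Let \(N=\{v\in V\mid Iv=0\}\), the \(I\)-annihilated part. The coset \(\bar 1\) lies in \(N\), because \(I\subseteq M\). Moreover \(\bar s_i\in N\), because \(r\bar s_i=\bar{s_i r}=s_i\bar r=0\) for \(r\in I\). Hence \(V=\sum R\bar s_i\) with each generator killed by \(I\).

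The key claim is that \(a\) annihilates every \(v\in V\) with \(Iv=0\). The map \(R\to V\), \(x\mapsto xv\), induces \(R/I\to V\). If \(av\neq 0\), then \(Rav\) is a nonzero submodule of the cyclic module \(Rv\cong R/\operatorname{ann}(v)\), where \(\operatorname{ann}(v)\supseteq I\).

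To use \(a\in\jrad{I}\), the plan is a Nakayama-type argument. Let \(K=\{x\in R\mid x\bar 1_V\in\ldots\}\); more directly, show that \(\jrad{I}\) is contained in every maximal left ideal containing \(\operatorname{ann}_R(v)\) for each \(v\in N\). Choose an \(R\)-maximal submodule \(V'\subsetneq V\) (it exists since \(V\) is finitely generated). Then \(V/V'\) is simple, and for any generator \(\bar s_i\notin V'\), the annihilator \(\operatorname{ann}_R(\bar s_i+V')\) is a maximal left ideal containing \(I\). Hence \(a\bar s_i\in V'\).

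From this we want \(aV\subseteq\) every maximal \(R\)-submodule, i.e.\ \(aV\subseteq \operatorname{Rad}_R(V)\). Here the centrality of the \(s_i\) is used: \(a\bar s_i=s_i\bar a\), so it suffices to show \(\bar a\in V'\) for all such \(V'\), and the cases \(\bar 1\in V'\) and \(\bar 1\notin V'\) are handled separately. Next, \(SaS\)-style closure, namely \(aV=\sum R a\bar s_i\), gives \(S\cdot aV=\sum_i s_iR\,a\,\cdots\). Because \(S a\bar 1 = a S\bar 1\) up to the \(R\)-action, \(S\bar a\) is an \(R\)-submodule inside \(\operatorname{Rad}_R(V)\). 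But \(S\bar a\) is an \(S\)-submodule of the simple module \(V\), so it is either \(0\) or \(V\). It cannot be \(V\), because \(\operatorname{Rad}_R(V)\neq V\) for the nonzero finitely generated module \(V\). Hence \(\bar a=0\), i.e.\ \(a\in M\).

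The main obstacle is exactly this step: proving \(\bar a\in\operatorname{Rad}_R(V)\) and making the interaction between left multiplication by \(a\) and the \(s_i\) rigorous, since \(a\) is not central.

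\textbf{Part (2).} Let \(a\in R\cap\jrad{SI}\) and let \(m\supseteq I\) be a maximal left ideal of \(R\). By the hypothesis, \(Sm\) is a proper left ideal of \(S\); by Zorn's lemma it lies in a maximal left ideal \(M\) of \(S\), and \(M\supseteq Sm\supseteq SI\). Then \(a\in M\cap R\). The set \(M\cap R\) is a proper left ideal of \(R\) (it does not contain \(1\)) and contains \(m\), so maximality gives \(M\cap R=m\). Therefore \(a\in m\). Intersecting over all such \(m\) yields \(a\in\jrad{I}\). This part is routine.
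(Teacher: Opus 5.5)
Your Part (2) is the paper's argument. For Part (1) your route differs from the paper's. The paper quotes Passman's Lemma~20.1 that $S/M$ is a finite direct sum of simple $R$-modules. From this, $R\cap M=\operatorname{ann}_R(1+M)$ is an intersection of maximal left ideals of $R$, each containing $I$ and hence $\jrad{I}$. You instead try to trap $\bar a$ in $\operatorname{Rad}_R(V)$. That idea works, but you do not carry out the decisive step correctly, and you say yourself that you leave it open.

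Your stated target $aV\subseteq\operatorname{Rad}_R(V)$ is false in general. Take $S=R=M_2(k)$ and $I=M=Re_{11}$, the matrices with zero second column. Then $\jrad{I}=M$ and $V=R/M$ is simple, so $\operatorname{Rad}_R(V)=0$. Yet $a=e_{11}\in M$ satisfies $a\,\overline{e_{12}}=\overline{e_{12}}\neq 0$. Likewise the identity $aV=\sum_i Ra\bar s_i$ is wrong; only $aV=\sum_i aR\bar s_i$ holds. Your `key claim' that $a$ kills every $v$ with $Iv=0$ is never proved, and it is also unnecessary.

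The repair uses only what you already have. The right object is $S\bar a$, not $aV$. Since $s_i$ commutes with $a$, you have $s_i\bar a=a\bar s_i$, so $S\bar a=\sum_i R\,s_i\bar a=\sum_i R\,a\bar s_i$. Your maximal-submodule computation, which uses $I\bar s_i=0$, gives $a\bar s_i\in V'$ for every maximal $R$-submodule $V'$, i.e.\ $a\bar s_i\in\operatorname{Rad}_R(V)$. Hence $S\bar a\subseteq\operatorname{Rad}_R(V)\subsetneq V$. The inclusion is strict because $V$ is a nonzero finitely generated $R$-module. Since $S\bar a$ is an $S$-submodule of the simple module $V$, it must be $0$, so $a\in M$.

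Alternatively: $\bar a\in\operatorname{Rad}_R(V)$ by the same computation with $\bar 1$. Left multiplication by each $s_i$ is an $R$-endomorphism of $V$, so it preserves $\operatorname{Rad}_R(V)$. Thus $\operatorname{Rad}_R(V)$ is an $S$-submodule and must be $0$.

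Once fixed, your argument is more elementary than the paper's. It needs only the existence of maximal submodules in finitely generated modules. In effect it proves the part of Passman's lemma actually used, namely $\operatorname{Rad}_R(S/M)=0$. The paper's citation supplies the stronger structural fact that $S/M$ is semisimple over $R$.
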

	
	\begin{proof} \textup{(1)} Let $M$ be a maximal left ideal of $S$ containing $SI$. By \cite[Lemma 20.1]{passman2004course}, the left $R$-module $S/M$  is a finite direct sum of irreducible $R$-modules. Hence \[ S/M \cong \bigoplus_{i=1}^{n} R/m_i, \] where each $m_i$ is a maximal left ideal of $R$, and the element $1+M$ corresponds to \[ (1+m_1,\dots,1+m_n). \] Therefore, \[ \operatorname{ann}_R(1+M) = \bigcap_{i=1}^{n}\operatorname{ann}_R(1+m_i). \] Since \[ \operatorname{ann}_R(1+m_i)=m_i, \] we obtain \[ \operatorname{ann}_R(1+M) = \bigcap_{i=1}^{n}m_i. \] Because $SI\subseteq M$, we have \[ I(1+M)=0, \] and hence \[ I\subseteq \operatorname{ann}_R(1+M) = \bigcap_{i=1}^{n}m_i. \] It follows that every $m_i$ contains $I$. Consequently, \[ \operatorname{ann}_R(1+M) = \bigcap_{i=1}^{n}m_i \supseteq \jrad{I}, \] since $\jrad{I}$ is the intersection of all maximal left ideals of $R$ containing $I$. As this holds for every maximal left ideal $M$ of $S$ containing $SI$, we have \[ R\cap \jrad{SI} = R\cap\bigcap_{SI\subseteq M} M \supseteq \bigcap_{SI\subseteq M}\operatorname{ann}_R(1+M) \supseteq \jrad{I}. \] This proves the first statement. \\
		\medskip \textup{(2)}  Let \( r\in R\cap \jrad{SI}. \) To show that $r\in  \jrad{I}$, let $m$ be a maximal left ideal of $R$ containing $I$. By hypothesis, \( Sm\neq S. \) Hence $Sm$ is contained in some maximal left ideal $M$ of $S$. Since \[ SI\subseteq Sm\subseteq M, \] we have \(\jrad{SI}\subseteq M. \) Therefore, \( r\in R\cap M. \) Since $m$ is maximal and \[m\subseteq R\cap M\neq R,\] it follows that $R\cap M=m$. Thus $r\in m$. Because $m$ was an arbitrary maximal left ideal of $R$ containing $I$, \[ r\in \bigcap_{I\subseteq m} m = \jrad{I}. \] Hence \textup{(2)} follows. \end{proof}
	We now prove that in a graded ring, the Jacobson radical of a graded left ideal is graded. 
	\begin{theorem}\label{thm:jac-graded}
		Let \(S = \oplus_{n=0}^\infty S_n\) be a graded ring, and let \(I\) be a graded left ideal in \(S\). Then  \(\jrad{I}\) is graded, and furthermore, \(S_n \cap \jrad{I} \) is integral over \(I\) for all \(n\ge 1\).
	\end{theorem}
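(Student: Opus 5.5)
We follow Passman's approach to the classical statement that the Jacobson radical of a graded ring is graded (Bergman's theorem), combined with the preceding lemma. Fix an integer \(q\ge 2\) and let \(\zeta\) be a formal primitive \(q\)-th root of unity. Work first in the ring extension \(S'=S\otimes_{\mathbb Z}\mathbb Z[\zeta]\), or more conveniently in \(S[\zeta]\), with \(\zeta\) central. On \(S'\) define the automorphism \(\sigma\) by \(\sigma(s_n)=\zeta^n s_n\) for \(s_n\in S_n\). Because \(I\) is graded, the left ideal \(S'I\) is \(\sigma\)-stable. Since \(\sigma\) is an automorphism, it permutes the maximal left ideals containing \(S'I\), so \(\jrad{S'I}\) is \(\sigma\)-stable.

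By part (1) of the lemma, applied with \(s_i=\zeta^{i}\), we have \(\jrad{I}\subseteq S\cap\jrad{S'I}\). Part (2) needs \(S'm\neq S'\) for maximal left ideals \(m\supseteq I\). This is where the lemma hypotheses must be checked, and it is a point of concern, since \(\mathbb Z[\zeta]\) is free over \(\mathbb Z\) with a basis containing \(1\), so \(S'm\cap S=m\). Now take \(a=\sum_{n\le N}a_n\in\jrad{I}\). Then \(\sigma^j(a)=\sum_n\zeta^{jn}a_n\in\jrad{S'I}\) for every \(j\). Averaging gives
\[
q\,a_n=\sum_{j}\zeta^{-jn}\sigma^j(a)\in \jrad{S'I}\cap S\subseteq\jrad{I}.
\]
Choosing two coprime values of \(q\) larger than \(N\) shows that \(a_n\in\jrad{I}\). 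The step I expect to be the main obstacle is exactly this: removing the factor \(q\) and verifying part (2), that is, descending from \(S'\) back to \(S\). An alternative that avoids roots of unity altogether is to take \(S'=S[t]\) with \(\sigma\) replaced by the graded map \(s_n\mapsto s_nt^n\) into \(S[t]\). This is Passman's polynomial trick: one shows \(\sum a_nt^n\in\jrad{S[t]I}\), using the artinian-free version, and then evaluates. I would try the roots-of-unity route first and fall back on the \(t\)-route if the descent fails.

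For the integrality statement, let \(a\in S_n\cap\jrad{I}\) with \(n\ge1\). By Proposition~\ref{prp:int-char}(4) it suffices to show \(I[t]+S[t](1-at)=S[t]\). Equivalently, by Proposition~\ref{prp:char-jac-rad}, it suffices that \(1-at\) is left invertible modulo \(I[t]\) in \(S[t]\). Grade \(S[t]\) by assigning \(t\) degree \(-n\) plus a large shift. More cleanly, use the grading of \(S\) together with the substitution \(t\mapsto\) a central variable of degree \(0\), so that \(at\) is homogeneous. The key observation is the following. For each \(r\in S\), \(I+S(1-ra)=S\), since \(a\in\jrad{I}\). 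Take \(r\) homogeneous and compare homogeneous components, using that \(I\) is graded and \(a\) has positive degree. The degree-\(0\) part of an expression \(1=x+s(1-ra)\), with \(x\in I\), forces relations of the form \(s_{kn}=s_{(k-1)n}ra\) modulo \(I\) in successive degrees. Since \(s\) has only finitely many components, this truncation yields \(a^{m}\in Ia^\infty\) for some \(m\). This is precisely integrality of \(a\) over \(I\) by Proposition~\ref{prp:int-char}(2). The bookkeeping here is routine once one takes \(r=1\), and doing it for \(r=1\) already suffices, which is why I expect this part to be easy.
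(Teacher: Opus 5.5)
Your proposal is correct. For the integrality statement you do what the paper does. From \(s(1-a)\in 1+I\) (your case \(r=1\), the paper's \(v(1-u)\in 1+I\)), gradedness of \(I\) gives \(s_0\in 1+I\) and \(s_m-s_{m-n}a\in I\) for \(m\ge 1\). Unrolling gives \(a^k-s_{kn}\in I+Ia+\cdots+Ia^k\), so \(a^k\in Ia^\infty\) once \(s_{kn}=0\). The unrolling has to be tracked in \(I+Ia+\cdots+Ia^k\), not merely modulo \(I\), since \(I\) is only a left ideal. Your remarks about regrading \(S[t]\) play no role and can be dropped.

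For gradedness you share the paper's framework. Both arguments adjoin a central primitive root of unity freely, use the lemma to get \(\jrad{I}=S\cap\jrad{S'I}\), apply the grading automorphism, and use two coprime integers to cancel the multiplier. You isolate components differently, though.

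\begin{itemize}
\item The paper inducts on the number of nonzero components. It applies the inductive hypothesis, inside the extension \(T\) with its graded left ideal \(TI\), to \(\phi(s)-\zeta^b s\). So it needs only a prime \(p>|b-a|\) and the identity \((\zeta^c+2\zeta^{2c}+\cdots+(p-1)\zeta^{(p-1)c})(\zeta^c-1)=p\).
\item You average over the whole group \(\langle\sigma\rangle\). This needs \(q\) larger than the degree bound \(N\) of \(a\), and the identity \(\sum_{j=0}^{q-1}\zeta^{jk}=0\) for \(q\nmid k\). That identity holds because \(\mathbb Z[\zeta]\) is a domain and \(\zeta^k\neq 1\). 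In return you avoid induction and get every \(q\,a_n\) in one step.
\end{itemize}

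The descent you flag as the main obstacle is already settled by what you wrote. Since \(\mathbb Z[\zeta]\) is free over \(\mathbb Z\) with \(1\) in a basis, \(S'm\neq S'\) for every maximal \(m\supseteq I\), and \(S'I\neq S'\) for proper \(I\); the case \(I=S\) is trivial. So both parts of the lemma apply, and the fallback \(t\)-route is unnecessary.
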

	\begin{proof} We first show that \(\jrad{I}\) is graded. Let \[ s=\sum_i s_i \in \jrad{I}, \] where \(s_i\in S_i\). We shall prove that each homogeneous component \(s_i\) lies in \(\jrad{I}\). The proof proceeds by induction on the number \(n\) of nonzero homogeneous components of \(s\). The cases \(n=0\) and \(n=1\) are immediate, so assume \(n\ge 2\). Let \(s_a\neq 0\), and choose \(s_b\neq 0\) with \(b\neq a\). Choose a prime number \(p>|b-a|\), and consider the ring extension \[ T =S[t]/(1+t+\cdots+t^{p-1}) \] of \(S\). Let \(\zeta\) denote the image of \(t\) in \(T\). Then \(\zeta\) commutes with \(S\), \[ 1+\zeta+\cdots+\zeta^{p-1}=0, \] and therefore \(\zeta^p=1\). Furthermore, \[ T =S\oplus S\zeta\oplus\cdots\oplus S\zeta^{p-2}. \] By both parts of the preceding lemma, \[ \jrad{I} = S\cap \jrad{TI} . \] Note that \(T\) is graded, with homogeneous components \[ T_i =S_i+S_i\zeta+\cdots+S_i\zeta^{p-2}. \] Hence \(s=\sum_i s_i\in \jrad{TI} \), and each \(s_i\) is the \(T_i\)-component of \(s\). Now consider the automorphism \[ \phi:T\to T, \qquad \phi(x)=\zeta^i x \] for \(x\in T_i\). Since \(\phi(TI) = TI\), we have \(\phi(\jrad{TI}) = \jrad{TI}\). Therefore, 
		\[ \sum_i \zeta^i s_i =\phi(s)\in \jrad{TI}. \] Consequently, \[ e:=\sum_i \zeta^i s_i-\zeta^b\sum_i s_i \in \jrad{TI}. \] The element \(e\) has fewer than \(n\) nonzero homogeneous components. Therefore, by the induction hypothesis, \[ e_a=(\zeta^a-\zeta^b)s_a \in \jrad{TI}. \] Setting \(c=|b-a|\), we obtain \[ (\zeta^c-1)s_a\in \jrad{TI}. \] Since \(p>c>0\), the integers \(c\) and \(p\) are relatively prime. Hence 
		\[ \bigl( \zeta^c+2\zeta^{2c} +\cdots+ (p-1)\zeta^{(p-1)c} \bigr) (\zeta^c-1) =p. \] 
		It follows that \[ ps_a\in S\cap \jrad{TI} =\jrad{I}. \] As this holds for at least two distinct primes \(p\), we conclude that \(s_a\in \jrad{I}\). This completes the proof that \(\jrad{I}\) is graded. For the second assertion, let \[ u\in S_n\cap \jrad{I}, \qquad n\ge 1. \] Then, by Proposition~\ref{prp:char-jac-rad}, \(v(1-u)\in 1+I\) for some \(v\in S\). Write \[ v=\sum_{i=0}^{\infty} v_i, \] where \(v_i\in S_i\). From \(u\in S_n\) and \(v(1-u)\in 1+I\) it follows that \(v_0\in 1+I\) and \[ v_iu -v_{i+n}\in I \] for all \(i\). Then, \(v_0\in 1+I\) and \(v_0u-v_n\in I\) imply \(u-v_n\in I+Iu\). An induction on \(j\ge 1\) yields 
		\[ u^j-v_{jn}\in I + Iu+\cdots+ Iu^j. \] Since \(v_{jn}=0\) for all sufficiently large \(j\), it follows that \(u\) is integral over \(I\). Hence \(S_n\cap \jrad{I}\) is integral over \(I\) for every \(n\ge 1\). 
	\end{proof}
	Applying the theorem to the graded ring \(R[x]\), we obtain a generalization of Amitsur's theorem \cite{amitsur1956radicals}.  
	\begin{theorem}[Generalized Amitsur Theorem]\label{thm:jac-rad-R[x]}
		For any left ideal \(I\) of \(R\), \(\jrad{I[x]} = (R\cap \jrad{I[x]}\,)[x]\). Furthermore,  \(R\cap \jrad{I[x]}\) is integral over \(I\). 
	\end{theorem}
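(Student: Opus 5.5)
The plan is to apply Theorem~\ref{thm:jac-graded} to the graded ring \(S=R[x]\), with \(S_n=Rx^n\), and the graded left ideal \(I[x]\). Then use the \(R\)-automorphism \(x\mapsto x+1\) to pass from homogeneous components to constant coefficients. Set \(J=\jrad{I[x]}\).

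\emph{Step 1 (gradedness).} By Theorem~\ref{thm:jac-graded}, \(J\) is graded, so \(J=\bigoplus_n (J\cap Rx^n)\). It therefore suffices to prove that for every \(r\in R\) and \(n\ge0\),
\[
rx^n\in J \iff r\in R\cap J .
\]
For the reverse implication, note that \(x\) is central. Hence if \(r\in J\), then \(rx^n=x^nr\in J\), because \(J\) is a left ideal of \(R[x]\). This gives \((R\cap J)[x]\subseteq J\), and together with gradedness it will give the equality \(J=(R\cap J)[x]\) once the forward implication is established.

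\emph{Step 2 (the key trick).} Let \(\sigma\) be the ring automorphism of \(R[x]\) fixing \(R\) and sending \(x\mapsto x+1\); this is well defined since \(x\) is central. Clearly \(\sigma(I[x])=I[x]\). An automorphism permutes the maximal left ideals containing \(I[x]\), so \(\sigma(J)=J\). Now suppose \(rx^n\in J\). Then
\[
\sigma(rx^n)=r(x+1)^n=\sum_k \binom{n}{k} r x^k\in J .
\]
By gradedness, every homogeneous component of this element lies in \(J\). The degree-\(0\) component is \(r\), so \(r\in J\). This proves the forward implication and hence \(J=(R\cap J)[x]\).

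\emph{Step 3 (integrality).} Let \(r\in R\cap J\). By Step 1, \(rx\in Rx\cap J=S_1\cap J\), and Theorem~\ref{thm:jac-graded} says this is integral over \(I[x]\). By Proposition~\ref{prp:def-int}, there are \(f_0,\dots,f_{m-1}\in I[x]\) with
\[
(rx)^m=\sum_{j<m} f_j(rx)^j .
\]
Since \(x\) is central, this reads \(r^mx^m=\sum_{j<m} f_j r^j x^j\). Comparing coefficients of \(x^m\) gives
\[
r^m=\sum_{j<m} c_j r^j ,
\]
where \(c_j\in I\) is the coefficient of \(x^{m-j}\) in \(f_j\). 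Hence \(r\) is integral over \(I\).

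The only delicate point is Step 2: recognizing that gradedness alone does not remove the powers of \(x\), and that invariance of \(J\) under \(x\mapsto x+1\) does. The remaining steps are routine coefficient comparisons.
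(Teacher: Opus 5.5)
Your proposal is correct and follows the paper's own proof. Both arguments get gradedness of $\jrad{I[x]}$ from Theorem~\ref{thm:jac-graded}, use invariance under the automorphism $x\mapsto x+1$ to extract the constant coefficient $r$ from $r(x+1)^n$, and then apply integrality of $rx\in S_1\cap\jrad{I[x]}$ over $I[x]$ and compare coefficients. Your Step~3 simply spells out, via Proposition~\ref{prp:def-int}, the coefficient comparison that the paper only sketches.
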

	\begin{proof} By Theorem~\ref{thm:jac-graded}, the left ideal $\jrad{I[x]}$ is graded. Let \[ N=\jrad{I[x]}\cap R. \] It is clear that  \[ N[x]\subseteq \jrad{I[x]}. \] Let \[ f(x)=\sum_i r_i x^i \in \jrad{I[x]}. \] Since $\jrad{I[x]}$ is graded, each homogeneous component $r_i x^i$ also belongs to $\jrad{I[x]}$. Thus, it suffices to show that \[ r x^i \in \jrad{I[x]} \] implies \(r\in N.\) Consider the automorphism of $R[x]$ given by \( x\longmapsto x+1. \) Since this automorphism preserves \(I[x]\), and hence \(\jrad{I[x]}\), we have \[ r(1+x)^i\in \jrad{I[x]}. \] 
		Expanding $(1+x)^i$ yields \[ r+rix+\cdots+rx^i \in \jrad{I[x]}. \] As $\jrad{I[x]}$ is graded, its degree-zero component \(r\) must also belong to $\jrad{I[x]}$. Hence \[ r\in \jrad{I[x]}\cap R=N. \] Therefore, \[ \jrad{I[x]}\subseteq N[x], \] and together with the reverse inclusion established above, we obtain \[ \jrad{I[x]}=(R\cap \jrad{I[x]}\,)[x]. \] Finally, let $r\in R\cap \jrad{I[x]}$. Then \[ rx\in \jrad{I[x]}=(R\cap \jrad{I[x]}\,)[x]. \] By Theorem~\ref{thm:jac-graded},   $rx$ is integral over $I[x]$. Consequently, some power of $rx$ lies in $I[x](rx)^{\infty}$, and comparing coefficients shows that a power of $r$ lies in $Ir^{\infty}$. Hence $r$ is integral over $I$. Therefore,  $R\cap \jrad{I[x]}$ is integral over $I$. 
	\end{proof}

	\bibliographystyle{plain}
	\bibliography{references}
	
\end{document}